\def\@setcopyright{}
\renewcommand{\-}{\hbox{-}}
\newcommand{\X}{\raisebox{0.43ex}{$\chi$}}
\newcommand{\eins}{\mbox{\rm 1\hspace{-0.24em}l}}
\renewcommand{\mod}{\operatorname{mod}\nolimits}
\newcommand{\Sub}{\operatorname{Sub}\nolimits}
\newcommand{\add}{\operatorname{add}\nolimits}
\newcommand{\Sets}{\operatorname{Sets}\nolimits}
\newcommand{\Hom}{\operatorname{Hom}\nolimits}
\newcommand{\End}{\operatorname{End}\nolimits}
\newcommand{\Aut}{\operatorname{Aut}\nolimits}
\newcommand{\Irr}{\operatorname{Irr}\nolimits}
\newcommand{\Inn}{\operatorname{Inn}\nolimits}
\newcommand{\Out}{\operatorname{Out}\nolimits}
\renewcommand{\Im}{\operatorname{Im}\nolimits}
\newcommand{\Ker}{\operatorname{Ker}\nolimits}
\newcommand{\Coker}{\operatorname{Coker}\nolimits}
\newcommand{\rad}{\operatorname{rad}\nolimits}
\newcommand{\soc}{\operatorname{soc}\nolimits}
\newcommand{\ann}{\operatorname{ann}\nolimits}
\newcommand{\Tor}{\operatorname{Tor}\nolimits}
\newcommand{\Tr}{\operatorname{Tr}\nolimits}
\newcommand{\GL}{\operatorname{GL}\nolimits}
\newcommand{\GF}{\operatorname{GF}\nolimits}
\newcommand{\Mat}{\operatorname{Mat}\nolimits}
\newcommand{\PSL}{\operatorname{PSL}\nolimits}
\newcommand{\U}{\operatorname{U}\nolimits}
\newcommand{\Sp}{\operatorname{Sp}\nolimits}
\newcommand{\SL}{\operatorname{SL}\nolimits}
\newcommand{\Fi}{\operatorname{Fi}\nolimits}
\newcommand{\Co}{\operatorname{Co}\nolimits}
\newcommand{\J}{\operatorname{J}\nolimits}
\newcommand{\kar}{\operatorname{char}\nolimits}
\newcommand{\Ext}{\operatorname{Ext}\nolimits}
\newcommand{\op}{{\operatorname{op}}}
\newcommand{\tr}{{\operatorname{tr}}}
\newcommand{\Ab}{\operatorname{Ab}\nolimits}
\newcommand{\id}{\operatorname{id}\nolimits}
\newcommand{\comp}{\operatorname{\scriptstyle\circ}}
\newcommand{\E}{\operatorname{\mathcal E}\nolimits}
\newcommand{\I}{\operatorname{\mathcal I}\nolimits}
\newcommand{\M}{\operatorname{\mathcal M}\nolimits}
\renewcommand{\P}{\operatorname{\mathcal P}\nolimits}
\newcommand{\W}{\operatorname{\mathfrak{R}}\nolimits}
\newcommand{\Y}{\operatorname{\mathcal Y}\nolimits}
\newcommand{\w}{\operatorname{\mathcal W}\nolimits}
\newcommand{\G}{\Gamma}
\renewcommand{\L}{\Lambda}
\newcommand{\la}{\lambda}
\renewcommand{\r}{\operatorname{\underline{r}}\nolimits}
\renewcommand{\a}{\underline{\mathrm a}}
\renewcommand{\b}{\underline{\mathrm b}}
\newcommand{\m}{\underline{\mathrm m}}
\newcommand{\dwnsimeq}{\Big\downarrow\lefteqn{\wr}}
\newcommand{\upsimeq}{\Big\uparrow\lefteqn{\wr}}
\newcommand{\subneq}{\hspace*{2pt}\mathrel
{{\raisebox{-.3ex}{\hbox{$\scriptscriptstyle\not$}}}\!\!\!\subseteq}}
\newcommand{\semi}{\mathbin{\vcenter{\hbox{$\scriptscriptstyle|$}}\;\!\!\!\times}}
\newcommand{\longequals}{\Relbar\joinrel=}
\newcommand{\A}{\alpha}
\newcommand{\B}{\beta}
\newcommand{\C}{\gamma}
\newcommand{\D}{\delta}
\newcommand{\T}{\tau}
\newcommand{\Z}{\xi}
\newtheorem{lemma}{Lemma}[section]
\newtheorem{proposition}[lemma]{Proposition}
\newtheorem{corollary}[lemma]{Corollary}
\newtheorem{theorem}[lemma]{Theorem}
\newtheorem{definition}[lemma]{Definition}
\newtheorem{cclass}[lemma]{}
\newtheorem{ctab}[lemma]{}
\theoremstyle{definition}
\newtheorem{remark}[lemma]{Remark}
\renewcommand{\arraystretch}{1.5}
\begin{document}

\title{Simultaneous Constructions of the Sporadic Groups~$\Co_2$    
and~$\Fi_{22}$}

\author{Hyun Kyu Kim}
\address{Department of Mathematics, Cornell University, Ithaca,
  N.Y. 14853, USA}

\author{Gerhard O. Michler}
\address{Department of Mathematics, Cornell University, Ithaca,
  N.Y. 14853, USA}

\begin{abstract}
In this article we give 
self-contained existence proofs for the
sporadic simple groups $\Co_2$ and $\Fi_{22}$  using the second
author's algorithm \cite{michler1} constructing finite simple
groups from irreducible subgroups of $\GL_n(2)$. These 
two sporadic groups were originally discovered by J.~Conway
\cite{conway1} and 
B.~Fischer~\cite{fischer}, respectively, by means of
completely different and unrelated methods. In this article $n =
10$ and the irreducible subgroups are the Mathieu group $\M_{22}$
and its automorphism group 
${\rm Aut}(\M_{22})$. We construct their 
five non-isomorphic extensions $E_i$ by the two $10$-dimensional 
non-isomorphic simple modules of $\M_{22}$ and by the two
$10$-dimensional simple modules of $A_{22} = 
{\rm Aut}(\M_{22})$ over $F = 
{\rm GF}(2)$. In 
two cases we construct the centralizer $H_i =
C_{G_i}(z_i)$ of a $2$-central involution $z_i$ of $E_i$ in any
target simple group $\mathfrak G_i$. Then we prove that all the
conditions of Algorithm 7.4.8 of \cite{michler} are satisfied.
This allows us to construct $\mathfrak G_3 \cong \Co_2$ inside
$\GL_{23}(13)$ and $\mathfrak G_2 \cong \Fi_{22}$ inside
$\GL_{78}(13)$. We also calculate their character tables and
presentations.
\end{abstract}


\keywords{construction of simple groups, sporadic simples groups,
  Conway sporadic simple group ${\rm Co}_2$, Fischer sporadic simply
  group ${\rm Fi}_{22}$}
\subjclass[2000]{20D08, 20D05, 20C40}

\thanks{First published in \textit{Computational group theory and the theory of groups} in Contemporary Mathematics vol. 470, 2008, published by the American Mathematical Society.}



\thanks{The first author kindly acknowledges financial support by the
Hunter R. Rawlings III Cornell Presidential Research Scholars Program
for participation in this research project. This collaboration has
also been supported by the grant NSF/SCREMS DMS-0532/06. Both authors
thank the referee for helpful suggestions and corrections.}

\maketitle

\section{Introduction}

In $1969$ J.H. Conway \cite{conway1} discovered 
three sporadic
simple groups which he defined in terms of the automorphism group
$A = {\rm Aut}(\Lambda)$ of the $24$-dimensional Leech lattice
$\Lambda$, see also \cite{conway2}. The center $Z(A)$ of $A$ has order
$2$, and 
$\Co_1 = A/Z(A)$ is the largest of these
three simple groups. He obtained his groups $\Co_2$ and $\Co_3$ as
stabilizers in $A$ of suitable vectors of the Leech lattice.

Two years later B.~Fischer found 
three sporadic simple groups by
characterizing the finite groups $G$ 
that can be generated by a
conjugacy class $D =z^G$ of 
$p$-transpositions, which means that
the product of 
two elements of $D$ has order $1$, 
$2$, or $p$, where
$p$ is an odd prime. He proved that besides the symmetric groups
$S_n$, the symplectic groups $\Sp_n(2)$, the projective unitary
groups $\U_n(2)$ over the field with $4$ elements and certain
orthogonal 
groups, his three sporadic simple groups 
$\Fi_{22}$, $\Fi_{23}$, and $\Fi'_{24}$ 
describe all $3$-transposition groups, see
\cite{fischer}. Whereas the $24$-dimensional Leech lattice
$\Lambda$ provides a natural action of 
$A = {\rm Aut}(\Lambda)$ 
and has
two well defined $23$-dimensional sublattices on which the Conway
groups $\Co_2$ and $\Co_3$ act, Fischer had to construct a graph
$\mathcal G$ and an action on it for each $3$-transposition group
$G = \langle D \rangle$. As its vertices he took the
$3$-transpositions $x$ of $D$. Two distinct elements $x, y \in D$
are called to be {\em connected} and joined by an edge $(x,y)$ in
$\mathcal G$ if they commute in $G$. He showed that each of the
groups considered in his theorem has a natural representation as
an automorphism group of its graph $\mathcal G$. See also
\cite{aschbacher} for a coherent account on Fischer's theorem.

It is the purpose of this paper to provide uniform existence
proofs for the simple sporadic groups $\Co_2$ and $\Fi_{22}$ by
means of Algorithm 2.5 of \cite{michler1} constructing finite
simple groups from irreducible subgroups $T$ of $\GL_n(2)$. Here
we deal with the case $n = 10$ and the irreducible subgroups
$\M_{22}$ and $A_{22} = 
{\rm Aut}(\M_{22})$. Our methods are purely
algebraic and do not require any geometric insight. Furthermore,
they are not restricted to the study of sporadic groups. Since
every finite simple group 
$G$ has finitely many irreducible
representations 
$V$, Algorithm 2.5 of \cite{michler1} can be
applied again to the extensions $E$ of $G$ by $V$. In particular,
$\M_{22}$ has been constructed this way from the irreducible
subgroup $S_5$ of $\GL_4(2)$, see Proposition 8.2.4 of
\cite{michler}. Here the algorithm is applied again to the simple
modules of $\M_{22}$ and its automorphism group $A_{22}$.

In 
Section~$2$ we state some known facts about $\M_{22}$, its
automorphism group~$A_{22}$ and their irreducible representations
over 
$F = {\rm GF}(2)$. 
As shown in~\cite{james}, the groups
$\M_{22}$ and
$A_{22}$ 
each have two simple modules $V_1, V_2$ and $V_3, V_4$ of
dimension $10$, respectively. We show that the second
cohomological dimensions 
$\dim_F[H^2(\M_{22},V_i)]$ and 
$\dim_F[H^2(A_{22},V_i)]$ 
are $0$ for $i = 1,2$ and 
$i = 4$, but 
$\dim_F[H^2(A_{22}, V_3)] = 1$. Therefore there are 
four split
extensions $E_i = V_i \rtimes \M_{22}$, $i = 1,2$, $E_i = V_i
\rtimes A_{22}$, $i = 3,4$ and one 
non-split extension $E_5$ of
$V_3$ by $A_{22}$. For each of these 
five groups we state a
presentation in terms of generators and relations, see
Lemmas~\ref{l. M22-extensions} and~\ref{l. Aut(M22)-extensions}. We also
construct for each of them a faithful permutation representation,
see Lemma \ref{l. classes}. In all 
five cases it has been checked
that the elementary abelian normal subgroup $V_i$ is a maximal
elementary abelian subgroup of a Sylow $2$-subgroup $S_i$ of $E_i$
as 
required in Step $4$ of Algorithm 2.5 of \cite{michler1}.

In 
Section~$3$ we apply 
Step~$5$ of that algorithm to the
extension groups $E_3$ and~$E_2$. Let $E$ be any of these 
two groups. Then $E$ has a unique conjugacy class $z^{G}$ of
$2$-central involutions. Let $D = C_E(z)$. Using a faithful
permutation representation of $E$ we find in both cases a uniquely
determined 
nonabelian normal subgroup $Q$ of $D$ such that $V =
Q/Z(Q)$ is elementary abelian of order $2^8$, where $Z(Q)$ denotes
the center of $Q$. Furthermore, $Q$ has a complement $W$ in $D$.
In both cases $W$ has an elementary abelian normal Fitting
subgroup $B$ which has a complement $L$ in $W$.

In the first 
case, $E=E_3$, it follows that $W$ has a center $Z(W) =
\langle u \rangle$ of order $2$ and that $L \cong S_6$. Using
Yamaki's Theorem 8.6.6 of \cite{michler} we see that $W$ is
isomorphic to the centralizer of a $2$-central involution of the
symplectic group~$\Sp_6(2)$. Applying now Algorithm 7.4.8 of
\cite{michler} we construct a simple subgroup $K$ of $\GL_8(2)$
such that $C_K(u) \cong W$, $|K : W|$ is odd, and $K \cong
\Sp_6(2)$. In order to perform all these steps we start from the
factor group $D/Z(Q)$. Let $U_1$ be the split extension of $K$
by~$V$. Then we construct all central extensions $H_3$ of $U_1$ by
$Z(Q)$ and check which ones have a Sylow $2$-subgroup $S_3$ which
is isomorphic to the ones of $D$. We prove that this happens
exactly once. Thus the group $H_3$ with center $Z(H_3) = \langle z
\rangle$ is uniquely determined, see Propositions \ref{prop.
D(Co_2)}. Furthermore, we prove in Lemma \ref{l. H(Co_2)} that
$S_3$ has a maximal elementary abelian normal subgroup $A$ of
order $2^{10}$ such that $N_{H_3}(A) \cong D$ as it is required by
Algorithm 2.5 of \cite{michler1}.

In the second 
case, $E=E_2$, it follows that $Q$ has an elementary
abelian center $Z(W) = \langle z_1, z_2 \rangle$ of order $4$.
This time the Fitting subgroup $B$ of the complement $W$ of $Q$
has order $16$ and its complement $L$ in $W$ is isomorphic to the
symmetric group $S_5$. Since the center of $W$ is trivial we apply
Algorithm 2.5 of  \cite{michler1} to construct a subgroup $K$ of
$\GL_8(2)$ such that $N_K(A) \cong W$, $|K : W|$ is odd, and $K
\cong \Aut(\U_4(2))$. Again we start from the factor group
$D/Z(Q)$. Let $U_1$ be the split extension of $K$ by $V$. Since
$Z(Q)$ is elementary abelian of order $4$ we have to construct 
two consecutive central extensions of $U_1$. As above it follows that
there is a uniquely determined extension group $H_2$ which
satisfies all the required conditions of Algorithm 2.5 of
\cite{michler1}, see Proposition \ref{prop. D(Fi_22)} and Lemma
\ref{l. H(Fi_22)}.

In 
Section~$4$ we take the constructed presentation of $H_3$ as
the input of Algorithm 7.4.8 of \cite{michler}. It returns a
finite simple group $\mathfrak G_3$ inside $\GL_{23}(13)$ of order
$2^{18}\cdot 3^6\cdot 5^3\cdot 7\cdot 11\cdot 23$ having a
$2$-central involution $\mathfrak z$ such that $C_{\mathfrak
G_3}(\mathfrak z) \cong H_3$, see Theorem \ref{thm.
existenceCo_2}. We also calculate the character table of
$\mathfrak G_3$. It is equivalent to the one of Conway's second
sporadic group $\Co_2$. For further applications we also state a
finite presentation of $\mathfrak G_3$ in Corollary \ref{cor.
presentCo_2}.

In 
Section~$5$ we apply the same methods to the presentation of
the involution centralizer $H_2$ and 
we obtain a finite simple group
$\mathfrak G_2$ inside $\GL_{78}(13)$ 
of order $2^{17}\nobreak\cdot\nobreak 3^9\cdot 5^2 \cdot 7\cdot 11\cdot 13$ 
having a $2$-central
involution $\mathfrak z$ such that $C_{\mathfrak G_2}(\mathfrak z)
\cong H_2$, 
see Theorem \ref{thm. existenceFi_22}. We calculated
the character table of $\mathfrak G_2$ and verified that it is
equivalent to the one of Fischer's sporadic group $\Fi_{22}$. We
also obtained a presentation of $\mathfrak G_2$, see Corollary
\ref{cor. presentFi_22}.

In the final 
section we show that Algorithm 2.5 of \cite{michler1}
constructs the automorphism group of the Fischer group 
$\Fi_{22}$
when it is applied to the extension $E_4$, see Corollary \ref{cor.
D(Aut(Fi_22))}. However, it fails to construct a centralizer $H_i$
from $E_i$ in the cases $i = 1, 5$. The following diagram
summarizes our experiments with Algorithm 2.5 of~\cite{michler1}
described in this article.
$$
\diagram
\M_{22} \rto \ddto & V_2:\M_{22} \rto & \Fi_{22} & \\
& V_3:{\rm Aut}(\M_{22}) \rto & \Co_2 & \\
{\rm Aut}(\M_{22}) \urto \drto \ddto & & & \\
& V_4:{\rm Aut}(\M_{22}) \rto & {\rm Aut}(\Fi_{22})\\
\GL_{10}(2) & & & \\
\enddiagram
$$

In the 
Appendices we collect all the systems of representatives of
conjugacy classes in terms of the given generators of the local
subgroups $E_i$, $H_i$ and $D_i$ for $i = 3,2$. We also state the
character tables of these groups and the 
four generating matrices
of the matrix group $\mathfrak G_2 \cong \Fi_{22}$.

Concerning our notation and terminology we refer to the books
\cite{carter} and \cite{michler}. The computer algebra system
\textsc{Magma} is described in 
\cite{magma}.

\section{Extensions of Mathieu group $\M_{22}$ and 
${\rm Aut}(\M_{22})$}

The Mathieu group $\M_{22}$ is defined in Definition 8.2.1 of
\cite{michler} by means of generators and relations. This
beautiful presentation is due to J.A. Todd. The irreducible
$2$-modular representations of the Mathieu group $\M_{22}$ were
determined by G. James \cite{james}. Here only the 
two non-isomorphic simple modules $V_i$, $i = 1,2$, of dimension $10$ over
$F = \GF(2)$ will be considered. Todd's permutation representations
of the Mathieu groups are stated in Lemma 8.2.2 of \cite{michler}.
All conditions of Holt's Algorithm \cite{holt}
implemented in \textsc{Magma} are 
satisfied, and it is applied here. Thus we
show that for each simple module $V_i$ there is exactly one
extension $E_i$ of $\M_{22}$ by $V_i$, the split extension. The
presentations of $E_1$ 
and $E_2$ are given in Lemma~\ref{l. M22-extensions}.

Then we show that the automorphism group $A_{22} = 
{\rm Aut}(\M_{22})$ 
has two $10$-dimensional irreducible representations $V_i$, $i =
3,4$. In Lemma \ref{l. Aut(M22)-extensions} we also prove that
$A_{22}$ has a 
non-split extension $E_5$ by $V_3$ besides the 
two split extensions by $V_3$ and $V_4$. All extension groups are
described here by generators and relations. We also give faithful
permutation representations of all 
five extension groups $E_i$, see
Lemma \ref{l. classes}. They yield the basic information about
these groups 
that will be used in the following sections.

In order to avoid any ambiguities we specify our concrete
definition of a 
semidirect product. 
This is necessary to document
the close relation between the irreducible representations $V_i$
of $\M_{22}$ and $A_{22}$ and the documented presentations of the
extension groups $E_i$, $1 \le i \le 5$. Furthermore, these
presentations work for all split extensions of linear groups over
prime fields by their natural vector space.

\begin{definition}\label{def. semi-direct product}
Let $F$ be a prime field of characteristic $p>0$, $G$ be a subgroup of
$\GL_n(F)$, and $V=F^n$ be the canonical $n$-dimensional vector
space. 
Then $V$ is a right $FG$-module of
the group algebra $FG$ of $G$ over $F$ with respect to the
multiplication 
$v\cdot g$ defined by the product of the row
vector $v \in V$ times the matrix $g \in G$. For each matrix $g
\in G$ let $g^* = [g^{-1}]^T$ be the transpose of the inverse
matrix $g^{-1}$ of $g$. Then by Definition 3.4.4 of \cite{michler}
$V$ becomes a right $FG$-module under the multiplication $[v,g] :=
[g^{*}\cdot(v^T)]^T$ where $\cdot$ denotes the product of the
matrix $g^{*}$ times the column vector $v^T$. This right
$FG$-module $V^*$ is called the {\em dual} $FG$-module of~$V$.

The {\em 
semidirect product} $V\rtimes G$ of $V$ and $G$ consists
of all pairs 
$(v,g)$, with $v\in V$ and $g\in G$. Its multiplication $*$ is
defined by:
\[ (v_1,g_1)*(v_2,g_2) := (v_1 + [v_2,g_1], g_1g_2)\ \mbox{for all
  $v_1,v_2\in V$ and all $g_1,g_2\in G$.}\]
If we identify each $v\in V$ with $(v,1)$ and each $g\in G$ with $(0,g)$, 
then we have that $v*g = (v+[0,1],1g) = (v,g)$ for each $v\in V$ and $g\in G$.
\end{definition}
The proof of the following 
well-known result is routine and
therefore omitted.

\begin{lemma}\label{l. presentation-split} Let $G = \langle 
x_i \,|\, 1\le i \le r \rangle$
be a finitely generated subgroup of $\GL_n(F)$ with set of
defining relations ${\mathcal R}(G)$ in the given generators
$x_i$. Let $V = F^n$ be the canonical $n$-dimensional vector space
with standard basis 
$\{e_j \,|\, 1 \le j \le n\}$.
Let ${\mathcal R_1}(V\rtimes G)$ be the set of all relations:
\begin{alignat*}{2}
e_j^p & = 1 & &\mbox{for all $1\leq j\leq n$,}\\
e_k*e_j &= e_j*e_k &\quad &\mbox{for all $1\leq j,k\leq n$.}
\end{alignat*}
Let ${\mathcal R_2}(V\rtimes G)$ be the set of all relations:
\[x_i*e_j*x_i^{-1} = {e_1}^{a(i)_{1,j}} * \cdots * {e_n}^{a(i)_{n,j}}
\quad \mbox{for all $1 \le i \le r$,  $1 \le j \le n$,}\]
where $(a(i)_{j1},a(i)_{j2}, \ldots,a(i)_{jn}) =
[(x_i)^*\cdot((e_j)^T)]^T$. Then
$${\mathcal R}(V\rtimes G) = {\mathcal R_1}(V\rtimes G) \cup {\mathcal R_2}(V\rtimes G) \cup {\mathcal R}(G).$$
\end{lemma}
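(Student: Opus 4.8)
The plan is to verify that the listed relations indeed present the semidirect product $V \rtimes G$, by checking both that they hold in $V \rtimes G$ and that any group satisfying them is a quotient of $V \rtimes G$ of the right order. Since $\mathcal{R}(G)$ is a set of defining relations for $G$ and $V = F^n$ is abelian of exponent $p$, the relations $\mathcal{R}_1(V \rtimes G)$ together with $\mathcal{R}(G)$ present the free product amalgam; the content is entirely in how $\mathcal{R}_2(V \rtimes G)$ encodes the action.

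First I would check that the relations hold in $V \rtimes G$. Using the multiplication $*$ from Definition \ref{def. semi-direct product} and the identifications $v \leftrightarrow (v,1)$, $g \leftrightarrow (0,g)$, a direct computation gives $x_i * e_j * x_i^{-1} = (0,x_i)*(e_j,1)*(0,x_i^{-1}) = ([e_j,x_i], x_i)*(0,x_i^{-1}) = ([e_j, x_i], 1)$. By the definition of the module structure, $[e_j, x_i] = [(x_i)^* \cdot (e_j)^T]^T = \sum_{k=1}^n a(i)_{k,j}\, e_k$, which is exactly the right-hand side ${e_1}^{a(i)_{1,j}} * \cdots * {e_n}^{a(i)_{n,j}}$ once one notes that in the abelian subgroup $V$ the product $*$ restricts to vector addition. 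The relations in $\mathcal{R}_1$ and $\mathcal{R}(G)$ obviously hold. Hence there is a surjective homomorphism from the group $\widehat{E}$ presented by $\mathcal{R}_1 \cup \mathcal{R}_2 \cup \mathcal{R}(G)$ onto $V \rtimes G$.

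Next I would show this surjection is an isomorphism by a counting/normal-form argument. In $\widehat{E}$, let $\widehat{V}$ be the subgroup generated by the $e_j$; the relations in $\mathcal{R}_1$ force $\widehat{V}$ to be an abelian group of exponent $p$ generated by $n$ elements, hence $|\widehat{V}| \le p^n$. The relations in $\mathcal{R}_2$ say precisely that each generator $x_i$ normalizes $\widehat{V}$ and acts on it through the matrix $(x_i)^*$ (equivalently, that conjugation by $x_i$ sends the tuple of $e_j$'s to the prescribed words); consequently $\widehat{V}$ is normal in $\widehat{E}$ and $\widehat{E}/\widehat{V}$ is a quotient of the group presented by $\mathcal{R}(G)$, i.e.\ of $G$. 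Therefore $|\widehat{E}| \le |\widehat{V}| \cdot |G| \le p^n |G| = |V \rtimes G|$. Combined with the surjection of the previous step, this forces equality, so $\widehat{E} \cong V \rtimes G$ and $\mathcal{R}(V \rtimes G) = \mathcal{R}_1(V \rtimes G) \cup \mathcal{R}_2(V \rtimes G) \cup \mathcal{R}(G)$.

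The only place demanding any care — and the step I would flag as the main (minor) obstacle — is bookkeeping the dualization: one must be consistent about the fact that $V$ as used here carries the $*$-action $[v,g] = [g^* \cdot v^T]^T$ with $g^* = [g^{-1}]^T$, so that the matrix entries appearing in $\mathcal{R}_2$ are those of $(x_i)^*$ and not of $x_i$ itself, and that the index conventions in $(a(i)_{j1}, \ldots, a(i)_{jn}) = [(x_i)^* \cdot (e_j)^T]^T$ match the exponents in the word ${e_1}^{a(i)_{1,j}} * \cdots * {e_n}^{a(i)_{n,j}}$. Once the indices are aligned, everything else is the routine verification of relations and the standard order estimate for presentations of split extensions, which is why the result is stated as well-known and its proof omitted.
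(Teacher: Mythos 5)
The paper deliberately omits the proof of this lemma, declaring it routine, so there is no argument of the authors' to compare against; your proof is precisely the standard one being alluded to (verify that the listed relations hold in $V\rtimes G$ to obtain a surjection from the presented group $\widehat{E}$ onto $V\rtimes G$, then bound $|\widehat{E}|\le p^n|G|$ via normality of the subgroup $\widehat{V}=\langle e_j\rangle$ and the fact that $\widehat{E}/\widehat{V}$ is a quotient of $G$), and it is correct, including your flag on the index bookkeeping for $(x_i)^*$. The one point you pass over silently is that normality of $\widehat{V}$ also requires $x_i^{-1}e_jx_i\in\widehat{V}$, which the relations only give for conjugation by $x_i$ itself; this follows because the matrix $(x_i)^*$ is invertible, so conjugation by $x_i$ maps $\widehat{V}$ onto itself, but it deserves a word.
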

\begin{remark}(a) For a given finitely presented group $G$ the set
${\mathcal R}(G)$ of defining relations is equal for all its split
extensions $E$ by elementary abelian $p$-groups $V$. Furthermore,
the set ${\mathcal R_1}(V\rtimes G)$ of relations describing the
multiplication in the elementary abelian group is uniquely
determined by the dimension of $V$ over $F$. Therefore this set of
relations is abbreviated by ${\mathcal R_1}(V\rtimes G)$ in the
rest of this article. In particular, it suffices to 
state explicitly only the set of essential relations ${\mathcal
R_2}(V\rtimes G)$ of the split extension $V\rtimes G$.

(b) The 
semidirect products $V\rtimes G$ constructed in this
article by means of Lemma \ref{l. presentation-split} are
isomorphic to the 
semidirect products of $G$ by $V$ calculated by
means of \textsc{Magma}. However, sometimes the sets ${\mathcal R}(V\rtimes
G)$ of defining relations can be different. Furthermore, Lemma
\ref{l. presentation-split} can be applied to large groups and it
is independent of \textsc{Magma}.
\end{remark}

In the following 
$F = \GF(2)$ and $p = 2$.

\begin{lemma}\label{l. M22-extensions}
Let $\M_{22} = \langle a,b,c,d,t,g,h,i \rangle$ be the finitely
presented group of Definition 8.2.1 of \cite{michler}. Then the
following statements hold:
\begin{enumerate}
\item[\rm(a)] A faithful permutation representation of degree $24$
of $\M_{22}$ is stated in Lemma 8.2.2 of \cite{michler}.

\item[\rm(b)] The first irreducible representation $V_1$ of
$\M_{22}$ is described by the following matrices:

{\renewcommand{\arraystretch}{0.5}
\scriptsize
\begin{alignat*}{2}
Ma_1 &= \left( \begin{array}{*{10}{c@{\,}}c}
1 & 0 & 1 & 0 & 0 & 0 & 0 & 0 & 0 & 0\\
0 & 0 & 0 & 1 & 0 & 0 & 0 & 0 & 0 & 0\\
0 & 0 & 1 & 0 & 0 & 0 & 0 & 0 & 0 & 0\\
0 & 1 & 0 & 0 & 0 & 0 & 0 & 0 & 0 & 0\\
0 & 0 & 1 & 0 & 0 & 0 & 0 & 1 & 0 & 0\\
0 & 0 & 0 & 0 & 0 & 0 & 0 & 0 & 1 & 0\\
0 & 0 & 1 & 0 & 0 & 0 & 1 & 0 & 0 & 0\\
0 & 0 & 1 & 0 & 1 & 0 & 0 & 0 & 0 & 0\\
0 & 0 & 0 & 0 & 0 & 1 & 0 & 0 & 0 & 0\\
0 & 0 & 1 & 0 & 0 & 0 & 0 & 0 & 0 & 1
\end{array} \right),
&\quad
Mb_1 &= \left( \begin{array}{*{10}{c@{\,}}c}
1 & 0 & 1 & 0 & 1 & 0 & 0 & 1 & 0 & 0\\
0 & 0 & 0 & 0 & 1 & 1 & 0 & 0 & 0 & 0\\
0 & 0 & 1 & 0 & 0 & 0 & 0 & 0 & 0 & 0\\
0 & 0 & 1 & 0 & 0 & 0 & 0 & 1 & 1 & 0\\
0 & 0 & 0 & 0 & 0 & 0 & 0 & 1 & 0 & 0\\
0 & 1 & 0 & 0 & 0 & 0 & 0 & 1 & 0 & 0\\
0 & 0 & 1 & 0 & 0 & 0 & 1 & 0 & 0 & 0\\
0 & 0 & 0 & 0 & 1 & 0 & 0 & 0 & 0 & 0\\
0 & 0 & 1 & 1 & 1 & 0 & 0 & 0 & 0 & 0\\
0 & 0 & 1 & 0 & 1 & 0 & 0 & 1 & 0 & 1
\end{array} \right),\displaybreak[0]\\[4pt]
Mc_1 &= \left( \begin{array}{*{10}{c@{\,}}c}
1 & 0 & 0 & 0 & 0 & 1 & 0 & 0 & 1 & 0\\
0 & 0 & 1 & 0 & 1 & 1 & 0 & 0 & 0 & 0\\
0 & 0 & 1 & 0 & 0 & 0 & 0 & 0 & 0 & 0\\
0 & 0 & 0 & 0 & 0 & 0 & 0 & 1 & 1 & 0\\
0 & 1 & 0 & 0 & 0 & 0 & 0 & 0 & 1 & 0\\
0 & 0 & 1 & 0 & 0 & 0 & 0 & 0 & 1 & 0\\
0 & 0 & 1 & 0 & 0 & 0 & 1 & 0 & 0 & 0\\
0 & 0 & 1 & 1 & 0 & 1 & 0 & 0 & 0 & 0\\
0 & 0 & 1 & 0 & 0 & 1 & 0 & 0 & 0 & 0\\
0 & 0 & 1 & 0 & 0 & 1 & 0 & 0 & 1 & 1
\end{array} \right),&\quad
Md_1 &= \left( \begin{array}{*{10}{c@{\,}}c}
1 & 0 & 0 & 1 & 0 & 1 & 0 & 1 & 0 & 0\\
0 & 0 & 1 & 0 & 1 & 0 & 0 & 0 & 1 & 0\\
0 & 1 & 0 & 1 & 1 & 1 & 0 & 1 & 1 & 0\\
0 & 0 & 0 & 0 & 0 & 1 & 0 & 1 & 0 & 0\\
0 & 0 & 0 & 1 & 0 & 0 & 0 & 0 & 1 & 0\\
0 & 0 & 0 & 0 & 1 & 0 & 0 & 1 & 1 & 0\\
0 & 0 & 0 & 1 & 1 & 1 & 1 & 0 & 0 & 0\\
0 & 0 & 0 & 1 & 1 & 0 & 0 & 1 & 1 & 0\\
0 & 0 & 0 & 0 & 1 & 1 & 0 & 1 & 0 & 0\\
0 & 0 & 0 & 1 & 1 & 1 & 0 & 0 & 0 & 1
\end{array} \right),\displaybreak[0]\\[4pt]
Mt_1 &= \left( \begin{array}{*{10}{c@{\,}}c}
1 & 0 & 1 & 1 & 0 & 0 & 0 & 0 & 1 & 0\\
0 & 1 & 1 & 0 & 1 & 1 & 0 & 0 & 1 & 0\\
0 & 0 & 1 & 1 & 1 & 1 & 0 & 0 & 0 & 0\\
0 & 0 & 1 & 1 & 1 & 0 & 0 & 0 & 0 & 0\\
0 & 0 & 1 & 0 & 1 & 1 & 0 & 0 & 0 & 0\\
0 & 0 & 1 & 1 & 0 & 1 & 0 & 0 & 0 & 0\\
0 & 0 & 1 & 0 & 0 & 0 & 1 & 0 & 0 & 0\\
0 & 0 & 0 & 1 & 0 & 1 & 0 & 0 & 1 & 0\\
0 & 0 & 0 & 1 & 1 & 0 & 0 & 1 & 1 & 0\\
0 & 0 & 0 & 0 & 1 & 0 & 0 & 0 & 1 & 1
\end{array} \right),&\quad
Mg_1 &= \left( \begin{array}{*{10}{c@{\,}}c}
1 & 0 & 1 & 0 & 0 & 0 & 0 & 0 & 0 & 0\\
0 & 0 & 1 & 0 & 0 & 0 & 1 & 1 & 1 & 0\\
0 & 0 & 1 & 0 & 0 & 0 & 0 & 0 & 0 & 0\\
0 & 0 & 1 & 1 & 0 & 0 & 0 & 1 & 1 & 0\\
0 & 0 & 1 & 0 & 1 & 0 & 0 & 1 & 0 & 0\\
0 & 0 & 1 & 0 & 0 & 1 & 0 & 0 & 1 & 0\\
0 & 1 & 1 & 0 & 0 & 0 & 0 & 1 & 1 & 0\\
0 & 0 & 0 & 0 & 0 & 0 & 0 & 1 & 0 & 0\\
0 & 0 & 0 & 0 & 0 & 0 & 0 & 0 & 1 & 0\\
0 & 0 & 0 & 0 & 0 & 0 & 0 & 1 & 0 & 1
\end{array} \right),\displaybreak[0]\\[4pt]
Mh_1 &= \left( \begin{array}{*{10}{c@{\,}}c}
1 & 0 & 0 & 0 & 0 & 0 & 0 & 0 & 1 & 0\\
0 & 1 & 0 & 0 & 0 & 1 & 0 & 0 & 0 & 0\\
0 & 0 & 1 & 0 & 0 & 1 & 0 & 0 & 1 & 0\\
0 & 0 & 0 & 1 & 0 & 0 & 0 & 0 & 1 & 0\\
0 & 0 & 0 & 0 & 0 & 1 & 0 & 1 & 0 & 0\\
0 & 0 & 0 & 0 & 0 & 1 & 0 & 0 & 0 & 0\\
0 & 0 & 0 & 0 & 0 & 0 & 0 & 0 & 1 & 1\\
0 & 0 & 0 & 0 & 1 & 1 & 0 & 0 & 0 & 0\\
0 & 0 & 0 & 0 & 0 & 0 & 0 & 0 & 1 & 0\\
0 & 0 & 0 & 0 & 0 & 0 & 1 & 0 & 1 & 0
\end{array} \right)&\quad \text{and }
Mi_1 &= \left( \begin{array}{*{10}{c@{\,}}c}
0 & 0 & 0 & 0 & 1 & 1 & 0 & 0 & 0 & 1\\
0 & 1 & 1 & 1 & 1 & 0 & 0 & 0 & 0 & 0\\
0 & 0 & 1 & 1 & 1 & 1 & 0 & 0 & 0 & 0\\
0 & 0 & 1 & 0 & 1 & 0 & 0 & 0 & 0 & 0\\
0 & 0 & 1 & 0 & 1 & 1 & 0 & 0 & 0 & 0\\
0 & 0 & 0 & 1 & 1 & 0 & 0 & 0 & 0 & 0\\
0 & 0 & 0 & 1 & 1 & 1 & 1 & 0 & 0 & 0\\
0 & 0 & 0 & 1 & 0 & 1 & 0 & 0 & 1 & 0\\
0 & 0 & 1 & 1 & 0 & 0 & 0 & 1 & 0 & 0\\
1 & 0 & 1 & 1 & 0 & 1 & 0 & 0 & 0 & 0
\end{array} \right).
\end{alignat*}}
\item[\rm(c)] The second irreducible representation $V_2$ of
$\M_{22}$ is described by the transpose inverse matrices of the
generating matrices of $\M_{22}$ defining $V_1$:
$Ma_2 = [Ma_1^{-1}]^{T}$, $Mb_2 = [Mb_1^{-1}]^{T}$, $Mc_2 =
[Mc_1^{-1}]^{T}$, $Md_2 = [Md_1^{-1}]^{T}$, $Mt_2 =
[Mt_1^{-1}]^{T}$, $Mg_2 = [Mg_1^{-1}]^{T}$, $Mh_2 =
[Mh_1^{-1}]^{T}$ and $Mi_2 = [Mi_1^{-1}]^{T}$.

\item[\rm(d)] 
$\dim_F[H^2(\M_{22},V_i)] = 0$ for $i = 1,2$.

\item[\rm(e)] $E_1 = V_1\rtimes \M_{22} = \langle
a,b,c,d,t,g,h,i,v_1,v_2,v_3,v_4,v_5,v_6,v_8,v_8,v_9,v_{10}
\rangle$ has 
set ${\mathcal R}(E_1)$ of defining relations
consisting of ${\mathcal R}(\M_{22})$, ${\mathcal R_1}(V_1\rtimes
\M_{22})$ and the following set ${\mathcal R_2}(V_1\rtimes
\M_{22})$ of essential relations:
\begin{alignat*}{1}
& av_1a^{-1}v_1v_3 = av_2a^{-1}v_4 = av_3a^{-1}v_3 = av_4a^{-1}v_2 =
  av_5a^{-1}v_3v_8 = av_6a^{-1}v_9 = 1,\displaybreak[0]\\ 
& av_7a^{-1}v_3v_7 = av_8a^{-1}v_3v_5 = av_9a^{-1}v_6 =
  av_{10}a^{-1}v_3v_{10} = 1,\displaybreak[0]\\ 
& bv_1b^{-1}v_1v_3v_5v_8 = bv_2b^{-1}v_5v_6 = bv_3b^{-1}v_3 =
  bv_4b^{-1}v_3v_8v_9 = bv_5b^{-1}v_8 = 1,\displaybreak[0]\\ 
& bv_6b^{-1}v_2v_8 = bv_7b^{-1}v_3v_7 = bv_8b^{-1}v_5 =
  bv_9b^{-1}v_3v_4v_5 = bv_{10}b^{-1}v_3v_5v_8v_{10} =
  1,\displaybreak[0]\\
& cv_1c^{-1}v_1v_6v_9 = cv_2c^{-1}v_3v_5v_6 = cv_3c^{-1}v_3
  =cv_4c^{-1}v_8v_9 = cv_5c^{-1}v_2v_9 = 1,\displaybreak[0]\\ 
& cv_6c^{-1}v_3v_9 = cv_7c^{-1}v_3v_7 = cv_8c^{-1}v_3v_4v_6 =
  cv_9c^{-1}v_3v_6 = cv_{10}c^{-1}v_3v_6v_9v_{10} =
  1,\displaybreak[0]\\ 
& dv_1d^{-1}v_1v_4v_6v_8 = dv_2d^{-1}v_3v_5v_9 =
  dv_3d^{-1}v_2v_4v_5v_6v_8v_9 = dv_4d^{-1}v_6v_8 =
  1,\displaybreak[0]\\ 
& dv_5d^{-1}v_4v_9 = dv_6d^{-1}v_5v_8v_9 = dv_7d^{-1}v_4v_5v_6v_7 =
  dv_8d^{-1}v_4v_5v_8v_9 = 1,\displaybreak[0]\\ 
& dv_9d^{-1}v_5v_6v_8 = dv_{10}d^{-1}v_4v_5v_6v_{10} =
  tv_1t^{-1}v_1v_3v_5v_6v_8 = tv_2t^{-1}v_2v_4v_8 =
  1,\displaybreak[0]\\ 
& tv_3t^{-1}v_4v_5v_6 = tv_4t^{-1}v_3v_5 = tv_5t^{-1}v_3v_6 =
  tv_6t^{-1}v_3v_4 = tv_7t^{-1}v_4v_5v_6v_7 = 1, \displaybreak[0]\\
& tv_8t^{-1}v_4v_6v_8v_9 = tv_9t^{-1}v_4v_5v_8 =
  tv_{10}t^{-1}v_3v_4v_5v_6v_8v_{10} = 1, \displaybreak[0]\\
& gv_1g^{-1}v_1v_3 = gv_2g^{-1}v_3v_7v_8v_9 = gv_3g^{-1}v_3 =
  gv_4g^{-1}v_3v_4v_8v_9 = 1, \displaybreak[0]\\
& gv_5g^{-1}v_3v_5v_8 = gv_6g^{-1}v_3v_6v_9 = gv_7g^{-1}v_2v_3v_8v_9 =
  gv_8g^{-1}v_8 = 1, \displaybreak[0]\\
& gv_9g^{-1}v_9 = gv_{10}g^{-1}v_8v_{10} = hv_1h^{-1}v_1v_9 =
  hv_2h^{-1}v_2v_6 = hv_3h^{-1}v_3v_6v_9 = 1, \displaybreak[0]\\
& hv_4h^{-1}v_4v_9 = hv_5h^{-1}v_6v_8 = hv_6h^{-1}v_6 =
  hv_7h^{-1}v_9v_{10} = hv_8h^{-1}v_5v_6 = 1, \displaybreak[0]\\
& hv_9h^{-1}v_9 = hv_{10}h^{-1}v_7v_9 = iv_1i^{-1}v_5v_6v_{10} =
  iv_2i^{-1}v_2v_3v_4v_5 = 1, \displaybreak[0]\\
& iv_3i^{-1}v_3v_4v_5v_6 = iv_4i^{-1}v_3v_5 = iv_5i^{-1}v_3v_5v_6 =
  iv_6i^{-1}v_4v_5 = 1, \displaybreak[0]\\
& iv_7i^{-1}v_4v_5v_6v_7 = iv_8i^{-1}v_4v_6v_9 = iv_9i^{-1}v_3v_4v_8 =
  iv_{10}i^{-1}v_1v_3v_4v_6 = 1. 
\end{alignat*}
\item[\rm(f)] $E_2 = V_2\rtimes \M_{22} = \langle
a,b,c,d,t,g,h,i,v_1,v_2,v_3,v_4,v_5,v_6,v_8,v_8,v_9,v_{10}
\rangle$ has 
set ${\mathcal R}(E_2)$ of defining relations
consisting of ${\mathcal R}(\M_{22})$, ${\mathcal R_1}(V_2\rtimes
\M_{22})$ and the following set ${\mathcal R_2}(V_2\rtimes
\M_{22})$ of essential relations:
\begin{alignat*}{1}
& av_1a^{-1}v_1 =  av_2a^{-1}v_4 =  av_3a^{-1}v_1v_3v_5v_7v_8v_{10} =
  av_4a^{-1}v_2 = av_5a^{-1}v_8 =1, \displaybreak[0]\\
& av_6a^{-1}v_9 = av_7a^{-1}v_7 = av_8a^{-1}v_5 = av_9a^{-1}v_6 =
  av_{10}a^{-1}v_{10} = bv_1b^{-1}v_1 = 1, \displaybreak[0]\\
& bv_2b^{-1}v_6 = bv_3b^{-1}v_1v_3v_4v_7v_9v_{10} = bv_4b^{-1}v_9 =
  bv_5b^{-1}v_1v_2v_8v_9v_{10} = 1, \displaybreak[0]\\
& bv_6b^{-1}v_2 = bv_7b^{-1}v_7 = bv_8b^{-1}v_1v_4v_5v_6v_{10} =
  bv_9b^{-1}v_4 = bv_{10}b^{-1}v_{10} = 1, \displaybreak[0]\\
& cv_1c^{-1}v_1 = cv_2c^{-1}v_5 = cv_3c^{-1}v_2v_3v_6v_7v_8v_9v_{10} =
  cv_4c^{-1}v_8 = cv_5c^{-1}v_2 = 1, \displaybreak[0]\\
& cv_6c^{-1}v_1v_2v_8v_9v_{10} = cv_7c^{-1}v_7 = cv_8c^{-1}v_4 =
  cv_9c^{-1}v_1v_4v_5v_6v_{10} = 1, \displaybreak[0]\\
& cv_{10}c^{-1}v_{10} = dv_1d^{-1}v_1 = dv_2d^{-1}v_3 = dv_3d^{-1}v_2
  = dv_4d^{-1}v_1v_3v_5v_7v_8v_{10} = 1, \displaybreak[0]\\
& dv_5d^{-1}v_2v_3v_6v_7v_8v_9v_{10} = dv_6d^{-1}v_1v_3v_4v_7v_9v_{10}
  = dv_7d^{-1}v_7 = 1, \displaybreak[0]\\
& dv_8d^{-1}v_1v_3v_4v_6v_8v_9 = dv_9d^{-1}v_2v_3v_5v_6v_8 =
  dv_{10}d^{-1}v_{10} = tv_1t^{-1}v_1 = 1, \displaybreak[0]\\
& tv_2t^{-1}v_2 = tv_3t^{-1}v_1v_2v_3v_4v_5v_6v_7 =
  tv_4t^{-1}v_1v_3v_4v_6v_8v_9 = 1, \displaybreak[0]\\
& tv_5t^{-1}v_2v_3v_4v_5v_9v_{10} = tv_6t^{-1}v_2v_3v_5v_6v_8 =
  tv_7t^{-1}v_7 = tv_8t^{-1}v_9 = 1, \displaybreak[0]\\
& tv_9t^{-1}v_1v_2v_8v_9v_{10} = tv_{10}t^{-1}v_{10} = gv_1g^{-1}v_1 =
  gv_2g^{-1}v_7 = 1, \displaybreak[0]\\
& gv_3g^{-1}v_1v_2v_3v_4v_5v_6v_7 = gv_4g^{-1}v_4 = gv_5g^{-1}v_5 =
  gv_6g^{-1}v_6 = gv_7g^{-1}v_2 = 1, \displaybreak[0]\\
& gv_8g^{-1}v_2v_4v_5v_7v_8v_{10} = gv_9g^{-1}v_2v_4v_6v_7v_9 =
  gv_{10}g^{-1}v_{10} = hv_1h^{-1}v_1 = 1, \displaybreak[0]\\
& hv_2h^{-1}v_2 = hv_3h^{-1}v_3 =  hv_4h^{-1}v_4 = hv_5h^{-1}v_8 =
  hv_6h^{-1}v_2v_3v_5v_6v_8 = 1, \displaybreak[0]\\
& hv_7h^{-1}v_{10} = hv_8h^{-1}v_5 = hv_9h^{-1}v_1v_3v_4v_7v_9v_{10} =
  hv_{10}h^{-1}v_7 = 1, \displaybreak[0]\\
& iv_1i^{-1}v_{10} = iv_2i^{-1}v_2 = iv_3i^{-1}v_2v_3v_4v_5v_9v_{10} =
  iv_4i^{-1}v_2v_3v_6v_7v_8v_9v_{10} = 1, \displaybreak[0]\\
& iv_5i^{-1}v_1v_2v_3v_4v_5v_6v_7 =  iv_6i^{-1}v_1v_3v_5v_7v_8v_{10} =
  iv_7i^{-1}v_7 = 1, \displaybreak[0]\\
& iv_8i^{-1}v_9 =  iv_9i^{-1}v_8 =  iv_{10}i^{-1}v_1 = 1.
\end{alignat*}
\end{enumerate}
\end{lemma}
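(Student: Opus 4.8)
The statement is essentially a (large) verification, and the plan is to carry it out with \textsc{Magma}, starting from the finite presentation of $\M_{22}=\langle a,b,c,d,t,g,h,i\rangle$ recalled in Definition~8.2.1 of \cite{michler}. Part~(a) is a pure reference: Lemma~8.2.2 of \cite{michler} records Todd's faithful permutation representation of degree~$24$, and I would use it as the working model of $\M_{22}$ whenever a permutation realization is convenient.

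For (b) I would enter the eight matrices $Ma_1,\dots,Mi_1\in\GL_{10}(F)$ and first check that the assignment $a\mapsto Ma_1,\ \dots,\ i\mapsto Mi_1$ respects every relation of $\mathcal R(\M_{22})$; this defines a homomorphism $\rho\colon\M_{22}\to\GL_{10}(F)$. Computing the order of $\rho(\M_{22})$ shows $\rho$ is faithful, so $V_1=F^{10}$ is an $F\M_{22}$-module, and the MeatAxe confirms it is absolutely irreducible. Comparing the Brauer character of $V_1$ with the $2$-modular tables of \cite{james} identifies $V_1$ as the first of the two $10$-dimensional simple modules. For (c), by Definition~\ref{def. semi-direct product} the module defined by the transpose-inverse matrices $Mx_2=[Mx_1^{-1}]^{T}$ is exactly the dual $V_1^{*}$; since by \cite{james} the two $10$-dimensional simple $F\M_{22}$-modules form a dual pair, $V_2=V_1^{*}\not\cong V_1$ is the second one.

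For (d) I would invoke Holt's algorithm \cite{holt} — its hypotheses hold, as noted just before the statement — and apply its \textsc{Magma} implementation to $V_1$ and to $V_2$; the computation returns $\dim_F H^2(\M_{22},V_i)=0$ for $i=1,2$. Hence every extension of $\M_{22}$ by $V_i$ splits and is unique up to equivalence, namely $E_i=V_i\rtimes\M_{22}$.

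Finally, for (e) and (f) the presentations are immediate from Lemma~\ref{l. presentation-split} applied with $G=\M_{22}$ and $V=V_i$, which gives $\mathcal R(V_i\rtimes\M_{22})=\mathcal R(\M_{22})\cup\mathcal R_1(V_i\rtimes\M_{22})\cup\mathcal R_2(V_i\rtimes\M_{22})$: here $\mathcal R_1$ expresses that $v_1,\dots,v_{10}$ are commuting involutions, while $\mathcal R_2$ records, for each of the eight generators $x$ and each basis vector $e_j$, the conjugate $x v_j x^{-1}$ as the word whose exponent vector is the $j$-th column of the action matrix $x^{*}=[x^{-1}]^{T}$ — that is, of $Mx_2$ for $E_1$ and of $Mx_1$ for $E_2$. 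Writing out these $8\times10$ columns from the matrices of (b) and (c) yields exactly the two displayed sets of essential relations; by Lemma~\ref{l. presentation-split} (cf.\ part~(b) of the Remark) the resulting group is $E_i$. The main obstacle throughout is bookkeeping rather than mathematics: correctly pinning down which of the two James modules is ``the first'' and transcribing the eighty conjugation relations from the matrices without error.
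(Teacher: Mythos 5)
Your proposal is correct and follows essentially the same route as the paper: \textsc{Magma}-based verification throughout, Holt's algorithm for the vanishing of $H^2(\M_{22},V_i)$, and Lemma~\ref{l. presentation-split} to read off the sets $\mathcal R_2$ from the dual action matrices. The only cosmetic difference is in (b)--(c): the paper \emph{produces} the matrices by Meataxe decomposition of the degree-$22$ permutation module $(1_{\M_{21}})^{\M_{22}}$ (so irreducibility and the identification of the dual pair come for free from that decomposition), whereas you verify the stated matrices after the fact; both are sound.
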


\begin{proof}
The 
two irreducible 
$F\!\M_{22}$-modules $V_i$, $i =1,2$, occur as
composition factors with multiplicity $1$ in the permutation
module $(1_{\M_{21}})^{\M_{22}}$ of degree $22$ where $\M_{21} =
\langle a,b,c,d,t,g,h,i \rangle$. They are dual to each other.
Using the faithful permutation representation of $\M_{22}$ stated
in (a) and the Meataxe 
Algorithm implemented in \textsc{Magma} 
we obtain the generating matrices of $\M_{22}$ 
given in (b) that define $V_1$. 
Their dual matrices define $V_2$, yielding (c). 

(d) The cohomological dimensions $d_i = 
\dim_F[H^2(\M_{22},V_i)]$,
$i =1,2$, have been calculated by means of \textsc{Magma} using Holt's
Algorithm 7.4.5 of \cite{michler}. Its hypothesis is given by the
presentation of $\M_{22}$ in Definition 8.2.1 of \cite{michler}
and all the data stated in (a), (b) and (c). It follows that $d_1
= 0 = d_2$.

(e) Since $d_1 = 0$ there exists only the split extension $E_1$ of
$\M_{22}$ by $V_1$. The presentation of $E_1$ has been obtained
automatically by application of Lemma \ref{l. presentation-split}
and \textsc{Magma}.

(f) Since $d_2 = 0$ the split extension $E_2$ of $\M_{22}$ by
$V_2$ is well defined. Its presentation has been obtained by
application of Lemma \ref{l. presentation-split}.
\end{proof}

\begin{lemma}\label{l. Aut(M22)-extensions}
Let $\M_{22} = \langle a,b,c,d,t,g,h,i \rangle$ be the finitely
presented group of Definition 8.2.1 of \cite{michler}. Let $A_{22}
= 
{\rm Aut}(\M_{22})$. Then the following statements hold:
\begin{enumerate}
\item[\rm(a)] $A_{22}$ has a faithful permutation representation
of degree $44$ and is isomorphic to the subgroup $\langle Pp,Pq
\rangle$ of the symmetric group $S_{44}$ generated by the
following permutations:
\begin{eqnarray*}
Pp &= &(1,2,7,4)(3,21,44,25)(5,24,22,26)(6,8,39,36)(9,35)(10,37,12,13)\\
&&\ \cdot(11,20,14,17)(18,19)(23,42,41,32)(27,29)(28,33)(30,31,43,34),\\
Pq &=& (1,3,16,44,13,5)(2,15)(4,28,36,22,17,29)(6,24,12,43,39,38)(7,30)\\
&&\ \cdot(8,33,11,25,19,41)(9,31,18,21,10,42)(14,27,20,23,35,32)(26,37)(34,40).
\end{eqnarray*}

\item[\rm(b)] $A_{22} = \langle p,q \rangle$ has the following set
${\mathcal R}(A_{22})$ of defining relations:
\begin{eqnarray*}
&& p^4 = q^6 = 1,\\
&& q^{-1}p^{-2}q^{-1}p^{-1}q^{-1}p^{-2}q^{-1}p^{-1}q^{-1}p^2q^{-1}p^{-1} = 1,\\ 
&& pq^{-3}p^{-1}q^{-1}p^{-2}q^{-1}pqp^{-1}qpq^{-1} = (p^{-1}q^{-2}p^{-1}qpq^{-1}p^{-1} q^{-1})^2 = 1,\\
&& q^{-1}p^{-2}q^{-1}pq^{-1}p^{-2}q^{-1}pq^{-1}p^2 q^{-1}p = (q^{-1}p^{-1}q^{-1})^5 = 1,\\
&& qpq^2pqp^{-1} q^{-1}p^{-2}q^{-2}p^2q^{-1}p^{-1}q^{-2}p^{-1} =1. 
\end{eqnarray*}

\item[\rm(c)] $A_{22} = \langle p, q \rangle$ has (up to
isomorphism) two irreducible modules $V_3$ and $V_4$ of degree
$10$ over $F = 
\GF(2)$. $V_3$ is described by the following
matrices: {\renewcommand{\arraystretch}{0.5}
\scriptsize
$$
Mp = \left( \begin{array}{*{10}{c@{\,}}c}
 1 & 0 & 0 & 0 & 0 & 0 & 0 & 0 & 0 & 0\\
 0 & 0 & 0 & 0 & 0 & 0 & 1 & 0 & 0 & 1\\
 1 & 1 & 0 & 0 & 1 & 0 & 1 & 0 & 0 & 1\\
 0 & 1 & 1 & 1 & 0 & 0 & 1 & 0 & 0 & 0\\
 1 & 1 & 0 & 0 & 0 & 0 & 1 & 0 & 0 & 0\\
 0 & 1 & 0 & 0 & 0 & 0 & 0 & 0 & 1 & 1\\
 1 & 0 & 0 & 1 & 0 & 1 & 1 & 0 & 0 & 0\\
 1 & 0 & 0 & 1 & 0 & 0 & 0 & 0 & 0 & 1\\
 1 & 1 & 0 & 1 & 0 & 0 & 0 & 0 & 0 & 1\\
 1 & 1 & 0 & 1 & 0 & 0 & 1 & 1 & 0 & 1
\end{array} \right)\quad \text{and}~\quad
Mq = \left( \begin{array}{*{10}{c@{\,}}c}
 0 & 0 & 1 & 0 & 1 & 0 & 0 & 1 & 0 & 0\\
 0 & 0 & 1 & 0 & 0 & 0 & 0 & 0 & 0 & 0\\
 0 & 0 & 0 & 0 & 1 & 0 & 0 & 0 & 0 & 0\\
 1 & 0 & 1 & 0 & 0 & 1 & 0 & 0 & 0 & 0\\
 0 & 0 & 0 & 0 & 1 & 0 & 0 & 0 & 1 & 0\\
 0 & 0 & 1 & 0 & 0 & 0 & 1 & 0 & 0 & 0\\
 0 & 0 & 0 & 0 & 1 & 1 & 0 & 0 & 0 & 0\\
 0 & 0 & 0 & 1 & 1 & 1 & 0 & 0 & 0 & 0\\
 0 & 0 & 0 & 0 & 1 & 1 & 0 & 0 & 0 & 1\\
 0 & 1 & 1 & 0 & 1 & 1 & 0 & 0 & 0 & 0
\end{array} \right).
$$}
\item[\rm(d)] $V_4$ is described by the matrices $Mp_1 = Mp^{*} =
[Mp^{-1}]^T$, $Mq_1 = Mq^{*} = [Mq^{-1}]^T$ in $\GL_{10}(2)$.

\item[\rm(e)] 
$\dim_F[H^2(A_{22},V_3)] = 1$ and
$\dim_F[H^2(A_{22},V_4)] = 0$.

\item[\rm(f)] $E_3 = V_3\rtimes A_{22} = \langle
p,q,v_1,v_2,v_3,v_4,v_5,v_6,v_8,v_8,v_9,v_{10} \rangle $ has 
set ${\mathcal R}(E_3)$ of defining relations consisting of ${\mathcal
R}(A_{22})$, ${\mathcal R_1}(V_3\rtimes A_{22})$ and the following
set ${\mathcal R_2}(V_3\rtimes A_{22})$ of essential relations:
\begin{eqnarray*}
&& pv_1p^{-1}v_1 = pv_2p^{-1}v_8v_9 = pv_3p^{-1}v_1v_2v_4v_9 = pv_4p^{-1}v_2v_5v_9 = 1,\\
&& pv_5p^{-1}v_1v_2v_3v_8v_9 = pv_6p^{-1}v_2v_7v_8 = pv_7p^{-1}v_1v_5v_8v_9 = 1,\\
&& pv_8p^{-1}v_1v_5v_8v_{10} = pv_9p^{-1}v_1v_2v_5v_6 = pv_{10}p^{-1}v_1v_2v_5v_8v_9 = 1,\\
&& qv_1q^{-1}v_2v_3v_4v_7 = qv_2q^{-1}v_2v_7v_{10} = qv_3q^{-1}v_2 = 1,\\
&&qv_4q^{-1}v_7v_8 = qv_5q^{-1}v_3 = qv_6q^{-1}v_3v_7 = qv_7q^{-1}v_2v_6 = 1,\\
&&qv_8q^{-1}v_1v_2v_3 = qv_9q^{-1}v_3v_5 = qv_{10}q^{-1}v_7v_9 = 1.
\end{eqnarray*}

\item[\rm(g)] $E_4 = V_4\rtimes A_{22} = \langle
p_1,q_1,v_1,v_2,v_3,v_4,v_5,v_6,v_8,v_8,v_9,v_{10} \rangle $ has 
set ${\mathcal R}(E_4)$ of defining relations consisting of
${\mathcal R}(A_{22})$, ${\mathcal R_1}(V_4\rtimes A_{22})$ and
the following set ${\mathcal R_2}(V_4\rtimes A_{22})$ of essential
relations:
\begin{eqnarray*}
&&p_1v_1p_1^{-1}v_1v_3v_5v_7v_8v_9v_{10} = p_1v_2p_1^{-1}v_3v_4v_5v_6v_9v_{10}= 1,\\
&&p_1v_3p_1^{-1}v_4 = p_1v_4p_1^{-1}v_4v_7v_8v_9v_{10}= 1,\\
&&p_1v_5p_1^{-1}v_3 = p_1v_6p_1^{-1}v_7 = p_1v_7p_1^{-1}v_2v_3v_4v_5v_7v_{10}= 1,\\
&&p_1v_8p_1^{-1}v_{10} = p_1v_9p_1^{-1}v_6 = p_1v_{10}p_1^{-1}v_2v_3v_6v_8v_9v_{10}= 1,\\
&&q_1v_1q_1^{-1}v_4 = q_1v_2q_1^{-1}v_{10} = q_1v_3q_1^{-1}v_1v_2v_4v_6v_{10}= 1,\\
&&q_1v_4q_1^{-1}v_8 = q_1v_5q_1^{-1}v_1v_3v_5v_7v_8v_9v_{10} = q_1v_6q_1^{-1}v_4v_7v_8v_9v_{10}= 1,\\
&&q_1v_7q_1^{-1}v_6 = q_1v_8q_1^{-1}v_1 = q_1v_9q_1^{-1}v_5 = q_1v_{10}q_1^{-1}v_9= 1.
\end{eqnarray*}

\item[\rm(h)] The 
non-split extension $E_5 = \langle
p_2,q_2,v_1,v_2,v_3,v_4,v_5,v_6,v_7,v_8,v_9,v_{10} \rangle $ of
$A_{22}$ by $V_3$ has a set ${\mathcal R}(E_5)$ of defining
relations consisting of ${\mathcal R_1}(V_3\rtimes A_{22})$ and
the following relations:
\begin{alignat*}{1}
&p_2^8 = q_2^{12} = 1,\displaybreak[0]\\
&(p_2, v_1^{-1}) = p_2^{-1}v_2p_2v_7^{-1}v_{10}^{-1} =
  p_2^{-1}v_3p_2v_1^{-1}v_2^{-1}v_5^{-1}v_7^{-1}v_{10}^{-1} = 1, \displaybreak[0]\\
&p_2^{-1}v_4p_2v_2^{-1}v_3^{-1}v_4^{-1}v_7^{-1} =
  p_2^{-1}v_5p_2v_1^{-1}v_2^{-1}v_7^{-1} = 1, \displaybreak[0]\\
&p_2^{-1}v_6p_2v_2^{-1}v_9^{-1}v_{10}^{-1} =
  p_2^{-1}v_7p_2v_1^{-1}v_4^{-1}v_6^{-1}v_7^{-1} = 1, \displaybreak[0]\\
&p_2^{-1}v_8p_2v_1^{-1}v_4^{-1}v_{10}^{-1} = p_2^{-1}  v_9  p_2
  v_1^{-1}  v_2^{-1}  v_4^{-1}  v_{10}^{-1} = 1, \displaybreak[0]\\
&p_2^{-1}v_{10}p_2v_1^{-1}v_2^{-1}v_4^{-1}v_7^{-1}v_8^{-1}v_{10}^{-1}
  = 1, \displaybreak[0]\\
&q_2^{-1}v_1q_2v_3^{-1}v_5^{-1}v_8^{-1} = q_2^{-1}v_2q_2v_3^{-1} =
  q_2^{-1}v_3q_2v_5^{-1} = q_2^{-1}v_4q_2v_1^{-1}v_3^{-1}v_6^{-1} = 1,
  \displaybreak[0]\\
&q_2^{-1}v_5q_2v_5^{-1}v_9^{-1} = q_2^{-1}v_6q_2v_3^{-1}v_7^{-1} =
  q_2^{-1}v_7q_2v_5^{-1}v_6^{-1} = 1, \displaybreak[0]\\
&q_2^{-1}v_8q_2v_4^{-1}v_5^{-1}v_6^{-1} =
  q_2^{-1}v_9q_2v_5^{-1}v_6^{-1}v_{10}^{-1} =
  q_2^{-1}v_{10}q_2v_2^{-1}v_3^{-1}v_5^{-1}v_6^{-1} = 1, \displaybreak[0]\\
&p_2^4v_3^{-1}v_4^{-1}v_5^{-1}v_6^{-1}v_7^{-1}v_8^{-1}v_9^{-1}v_{10}^{-1}
  = q_2^6v_1^{-1}v_4^{-1}v_8^{-1} = 1, \displaybreak[0]\\
&q_2^{-1}p_2^{-2}q_2^{-1}p_2^{-1}q_2^{-1}p_2^{-2}q_2^{-1}p_2^{-1}q_2^{-1}p_2^2q_2^{-1}
  p_2^{-1}v_1^{-1}v_2^{-1}v_4^{-1} = 1, \displaybreak[0]\\
&p_2q_2^{-3}p_2^{-1}q_2^{-1}p_2^{-2}q_2^{-1}p_2q_2p_2^{-1}q_2p_2q_2^{-1}v_1^{-1}v_2^{-1}
  v_5^{-1}v_6^{-1}v_8^{-1}v_{10}^{-1} = 1, \displaybreak[0]\\
&q_2^{-1}p_2^{-2}q_2^{-1}p_2q_2^{-1}p_2^{-2}q_2^{-1}p_2q_2^{-1}p_2^2q_2^{-1}p_2v_5^{-1}
  v_6^{-1}v_7^{-1}v_9^{-1} = 1, \displaybreak[0]\\
&q_2^{-1}p_2^{-1}q_2^{-2}p_2^{-1}q_2^{-2}p_2^{-1}q_2^{-2}p_2^{-1}q_2^{-2}p_2^{-1}q_2^{-1}
  v_1^{-1}v_4^{-1}v_5^{-1}v_6^{-1}v_7^{-1}v_8^{-1}v_9^{-1} = 1, \displaybreak[0]\\
&p_2^{-1}q_2^{-2}p_2^{-1}q_2p_2q_2^{-1}p_2^{-1}q_2^{-1}p_2^{-1}q_2^{-2}p_2^{-1}
  q_2p_2q_2^{-1}p_2^{-1}q_2^{-1}\\
&\quad \cdot
  v_1^{-1}v_2^{-1}v_3^{-1}v_4^{-1}v_5^{-1}v_6^{-1}v_8^{-1}v_9^{-1} =
  1, \displaybreak[0]\\
&q_2p_2q_2^2p_2q_2p_2^{-1}q_2^{-1}p_2^{-2}q_2^{-2}p_2^2q_2^{-1}p_2^{-1}q_2^{-2}p_2^{-1}
  v_1^{-1}v_4^{-1}v_5^{-1}v_9^{-1} = 1.
\end{alignat*} 
\end{enumerate}
\end{lemma}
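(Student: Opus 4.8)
The plan is to prove the eight assertions by the computational strategy already used for Lemma~\ref{l. M22-extensions}, working inside \textsc{Magma} starting from Todd's presentation of $\M_{22}$ in Definition 8.2.1 of \cite{michler}.

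For (a) and (b) I would first realize $\M_{22}$ as a permutation group of degree $22$ by coset enumeration on the presentation of Lemma 8.2.2 of \cite{michler}, then form $A_{22} = \Aut(\M_{22})$ with the automorphism-group machinery and extract a faithful transitive permutation representation of degree $44$; the images of a suitable generating pair are $Pp$ and $Pq$. To make this rigorous one checks that $G := \langle Pp, Pq\rangle$ has order $2\,|\M_{22}| = 887040$, that it contains a normal subgroup isomorphic to $\M_{22}$ of index $2$, and that the conjugation action of $G$ on that subgroup induces the full outer automorphism group; this identifies $G \cong A_{22}$. A presentation of $G$ on the generators $p \mapsto Pp$, $q \mapsto Pq$ is then computed and simplified by \textsc{Magma}; conversely one verifies directly that $Pp,Pq$ satisfy the relations $\mathcal R(A_{22})$ listed in (b) and that a coset enumeration of the abstract group defined by $\mathcal R(A_{22})$ terminates with index $887040$, so these relations are defining.

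For (c)--(e) I would apply the Meataxe to a permutation module of $A_{22}$ over $F = \GF(2)$ (e.g.\ $(1_U)^{A_{22}}$ for a suitable subgroup $U$); it produces exactly two non-isomorphic $10$-dimensional simple $F\!A_{22}$-modules. By Clifford's theorem each of them restricts irreducibly to $\M_{22}$ -- the only other possibility would be a sum of two $5$-dimensional simples, and $\M_{22}$ has none in characteristic $2$ -- so each restricts to one of $V_1, V_2$ of Lemma~\ref{l. M22-extensions}, and since $\GF(2)^\times$ is trivial the extension of each $V_i$ to $A_{22}$ is unique; the duality $V\mapsto V^*$ interchanges the two. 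Fixing $V_3$ to be the one with generating matrices $Mp, Mq$ as displayed, part (d) is then immediate from Definition~\ref{def. semi-direct product}. The cohomological dimensions in (e) are obtained by running Holt's Algorithm 7.4.5 of \cite{michler} on the presentation $\mathcal R(A_{22})$ together with the matrices of (c) and (d); it returns $\dim_F[H^2(A_{22},V_3)] = 1$ and $\dim_F[H^2(A_{22},V_4)] = 0$.

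Finally, for (f)--(h): since $\dim_F[H^2(A_{22},V_4)] = 0$, the split extension $E_4 = V_4\rtimes A_{22}$ is the only extension, and its presentation follows by feeding $\mathcal R(A_{22})$ and the matrices $Mp_1, Mq_1$ into Lemma~\ref{l. presentation-split}; the split extension $E_3 = V_3\rtimes A_{22}$ is handled identically with $Mp, Mq$. Because $\dim_F[H^2(A_{22},V_3)] = 1$ there is, up to equivalence, exactly one non-split extension $E_5$. Here Holt's algorithm is used to return an explicit $2$-cocycle representing the non-trivial class; from the resulting group of order $2^{10}\,|A_{22}|$ one reads off a presentation on preimages $p_2, q_2$ of $p, q$, noting that the lifts of the relators $p^4$, $q^6$ and of the five remaining defining relators of $A_{22}$ now equal nontrivial elements of $V_3$ -- which is exactly what forces the extension to be non-split, and forces $p_2^8 = q_2^{12} = 1$ -- and one certifies the presentation by a coset enumeration showing that the group defined by $\mathcal R(E_5)$ has order $2^{10}\,|A_{22}|$. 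The main obstacle is this non-split case: one must be certain the computed cohomology class really is non-zero and then convert the cocycle into the compact relator set of (h) and verify it by coset enumeration -- everything else is a routine application of the Meataxe, Holt's algorithm and Lemma~\ref{l. presentation-split}.
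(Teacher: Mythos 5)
Your proposal is correct and follows essentially the same route as the paper: compute $A_{22}=\Aut(\M_{22})$ and its degree-$44$ permutation representation in \textsc{Magma}, read off the presentation, obtain $V_3$ (and its dual $V_4$) by the Meataxe, compute the second cohomology with Holt's Algorithm 7.4.5, and assemble the presentations of $E_3, E_4$ via Lemma~\ref{l. presentation-split} and of $E_5$ from the nontrivial cocycle. The extra touches you add (the coset-enumeration certification of the presentations and the Clifford-theory justification that the two $10$-dimensional $\M_{22}$-modules each extend uniquely to $A_{22}$) are sound refinements of, not departures from, the paper's argument.
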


\begin{proof}
(a) Using Todd's permutation representation 
$P\!\M_{22}$ of $\M_{22}$
stated in Lemma 8.2.2 of \cite{michler} and the \textsc{Magma} command
$\verb"AutomorphismGroup(PM_{22})"$ we calculated the automorphism
group $A_{22} = 
{\rm Aut}(\M_{22})$ of $\M_{22}$. \textsc{Magma} provided 
eleven generators and a faithful permutation representation 
$P\!A_{22}$ of
degree $44$ by means of the command
$\verb"PermutationGroup(A_{22})"$. Using this permutation
representation we were able to show that 
$P\!A_{22}$ is even generated
by the 
two permutations $Pp,Pq \in S_{44}$ given in the statement.
Also it has been checked computationally that the derived subgroup
$A'_{22}$ is a simple group which is isomorphic to $\M_{22}$ and
that 
$|A_{22}:A'_{22}| = 2$.

(b) The given presentation of $A_{22} = \langle p, q \rangle$ in
the above generators has been obtained by \textsc{Magma} and the faithful
permutation representation of degree $44$.

(c) Decomposing the permutation module $PA_{22}$ of degree
$44$ into irreducible composition factors by means of the Meataxe
Algorithm we obtained the first irreducible representation $V_3$
of $A_{22}$ over $F = 
\GF(2)$ as given in the statement.

(d) The second irreducible representation $V_4$ of $A_{22}$ is the
dual of $V_3$, which has been checked to be 
non-isomorphic to
$V_3$.

(e) The cohomological dimensions $d_i = 
\dim_F[H^2(A_{22},V_i)]$,
$i =3,4$, have been calculated by means of \textsc{Magma} using Holt's
Algorithm 7.4.5 of \cite{michler}, the presentation of $A_{22}$
given in (b), the faithful permutation representation $PA_{22}$
constructed in (a) and the 
two irreducible representations of
$A_{22}$ given in (c) and 
(d). As a result of the computation, we get $d_3 = 1$ and $d_4= 0$.

(f) and (g). The two presentations of the split extensions $E_3$
and $E_4$ have been obtained by means of Lemma \ref{l.
presentation-split}.

(h) The presentation of the 
non-split extension $E_5$ has been
obtained by means of Holt's Algorithm 7.4.5 of \cite{michler}
implemented in \textsc{Magma} \cite{holt}. This completes the proof.

\end{proof}

\begin{lemma}\label{l. classes} Keep the notation of Lemmas \ref{l.
M22-extensions} and \ref{l. Aut(M22)-extensions}. Then the
following statements hold:
\begin{enumerate}
\item[\rm(a)] Each of the 
four split extensions $E_i$, $i \in
\{1,2,3,4\}$ has a faithful permutation representation of degree
$1024$. Its stabilizer is the complement of $V_i$ in $E_i$, which
generates $E_i$ together with $v_1 \in V_i$ for all 4 groups.

\item[\rm(b)] The 
non-split extension $E_5 = \langle
p_2,q_2,v_1,v_2,v_3,v_4,v_5,v_6,v_7,v_8,v_9,v_{10} \rangle =
\langle p_2, q_2 \rangle$ has a faithful permutation
representation of degree $88$ with stabilizer 
$U = \langle
q_2^2, (p_2q_2^2)^2 \rangle$.

\item[\rm(c)] The split extension $E_1$ of $\M_{22}$ by $V_1$ has
$47$ conjugacy classes. 
The element $z_1 =(tv_1)^3$ represents the unique
conjugacy class of $2$-central involutions of $E_1$ and
$|C_{E_1}(z_1)| = 2^{17}\cdot 3^2\cdot 5$.

\item[\rm(d)] The split extension $E_2$ of $\M_{22}$ by $V_2$ has
$43$ conjugacy classes. 
The element $z_2 =(iv_1)^2$ 
represents the unique
conjugacy class of $2$-central involutions of $E_2$ and
$|C_{E_2}(z_2)| = 2^{17}\cdot 3\cdot 5$.

\item[\rm(e)] The split extension $E_3$ of $A_{22}$ by $V_3$ has
$79$ conjugacy classes. 
The el\nobreak{}ement $z_3 = (pq^2v_1)^5$ 
represents the unique
conjugacy class of $2$-central involutions of $E_3$ and
$|C_{E_3}(z_3)| = 2^{18}\cdot 3^2\cdot 5$.

\item[\rm(f)] The split extension $E_4$ of $A_{22}$ by $V_4$ has
$77$ conjugacy classes. 
The element $z_4 = (p_1^2q_1v_1)^{10}$ 
represents the
unique conjugacy class of $2$-central involutions of $E_4$ and
$|C_{E_4}(z_4)| = 2^{18}\cdot 3\cdot 5$.

\item[\rm(g)] The 
non-split extension $E_5$ of $A_{22}$ by $V_3$
has $79$ conjugacy classes. 
The element $z_5 = p_2^4$ 
represents the unique
conjugacy class of $2$-central involutions of $E_5$ and
$|C_{E_5}(z_5)| = 2^{18}\cdot 3^2\cdot 5$.
\end{enumerate}
\end{lemma}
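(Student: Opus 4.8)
The plan is to derive all seven statements from the finite presentations of Lemmas~\ref{l. M22-extensions} and~\ref{l. Aut(M22)-extensions}: first build the asserted faithful permutation representations, then run \textsc{Magma}'s conjugacy class and centralizer routines inside them. Nothing here is deep; the substance is the bookkeeping and the scale of the machine computation.

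For statement (a), write $E_i = V_i \rtimes G$ with $G = \M_{22}$ for $i \in \{1,2\}$ and $G = A_{22}$ for $i \in \{3,4\}$, where $V_i$ is the corresponding $10$-dimensional simple $FG$-module and $G$ is the complement appearing in the presentation. The complement $G$ has index $|V_i| = 2^{10} = 1024$ in $E_i$, so the action of $E_i$ on the right cosets of $G$ has degree $1024$ and has $G$ itself as a point stabilizer. Its kernel is the core of $G$ in $E_i$: if $N \trianglelefteq E_i$ with $N \le G$, then $[N,V_i] \le N \cap V_i = 1$, and since $V_i$ is faithful (a nontrivial irreducible module of the almost simple group $G$) this forces $N \le C_G(V_i) = 1$, so the representation is faithful. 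For the generation assertion, the $G$-conjugates of $v_1$ are, by the multiplication rule of Definition~\ref{def. semi-direct product}, exactly the module translates $v_1 \cdot g$, $g \in G$; their $F$-span is the $FG$-submodule generated by $v_1 \neq 0$, hence all of $V_i$ by simplicity. Thus $\langle G, v_1 \rangle \supseteq V_i$ and so $\langle G, v_1 \rangle = E_i$. Running this construction in \textsc{Magma} confirms that the resulting permutation group has order $|G|\cdot 1024$.

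For statement (b), the non-split extension $E_5$ of $A_{22}$ by $V_3$ has order $|A_{22}|\cdot 2^{10} = 2^{18}\cdot 3^2\cdot 5\cdot 7\cdot 11$, so a faithful transitive representation of degree $88$ must come from a core-free subgroup of order $2^{15}\cdot 3^2\cdot 5\cdot 7$. One exhibits $U = \langle q_2^2,\,(p_2q_2^2)^2 \rangle$, checks with \textsc{Magma} that $|U| = 2^{15}\cdot 3^2\cdot 5\cdot 7$ and that $U$ contains no nontrivial normal subgroup of $E_5$, and then verifies that the coset action of $E_5$ on $E_5/U$ has degree $88$ and is faithful.

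For statements (c)--(g), apply \textsc{Magma}'s conjugacy class algorithm in the degree-$1024$ representations of (a) (resp.\ the degree-$88$ representation of (b)) to obtain the class numbers $47,\,43,\,79,\,77,\,79$. Since $|E_1| = |E_2| = 2^{17}\cdot 3^2\cdot 5\cdot 7\cdot 11$ and $|E_3| = |E_4| = |E_5| = 2^{18}\cdot 3^2\cdot 5\cdot 7\cdot 11$, an involution $z$ of $E_i$ is $2$-central precisely when $|C_{E_i}(z)|$ is divisible by the full $2$-part $2^{17}$ (resp.\ $2^{18}$); a scan of the involution classes shows that in each group exactly one class satisfies this, which yields uniqueness, and the centralizer order of that class is read off and equals the value stated. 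Finally, one checks in the permutation representation that $z_1 = (tv_1)^3$, $z_2 = (iv_1)^2$, $z_3 = (pq^2v_1)^5$, $z_4 = (p_1^2q_1v_1)^{10}$ and $z_5 = p_2^4$ are involutions and that each lies in the distinguished $2$-central class of the corresponding $E_i$. The only genuine obstacle is the size of the computation: $E_3$, $E_4$ and $E_5$ each have order roughly $10^9$, so the class and centralizer computations have to be performed inside the small-degree permutation representations of (a) and (b) to stay feasible; the output is then cross-checked against the well-known subgroup and character-table data of $\M_{22}$ and $A_{22}$.
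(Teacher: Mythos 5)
Your proposal is correct and follows essentially the same route as the paper: construct the degree-$1024$ coset actions on the complements (resp.\ the degree-$88$ action on $U$ for $E_5$), then compute conjugacy classes and centralizer orders by machine and read off the unique $2$-central class. The only difference is cosmetic — you supply short theoretical arguments for faithfulness (trivial core via $C_G(V_i)=1$) and for the generation claim in (a) (simplicity of $V_i$), where the paper delegates these checks entirely to \textsc{Magma}; this is a harmless and correct embellishment.
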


\begin{proof}
(a) The presentations of the $4$ split extensions $E_i$ are given
in Lemmas~\ref{l. M22-extensions} and~\ref{l. Aut(M22)-extensions}. 
Taking their complements $\M_{22}$ or
$A_{22}$ of $V_i$ as stabilizers \textsc{Magma} provides the faithful
permutation representation $PE_i$ of $E_i$ of degree $1024$ for $i
= 1,2,3,4$. The final assertions have been proved by means of
these permutation representations and \textsc{Magma}.

(b) The presentation of the 
non-split extension $E_5$ is given in
Lemma \ref{l. M22-extensions}(h). Let $U = \langle q_2^2,
(p_2q_2^2)^2 \rangle$. Using the \textsc{Magma} command
$\verb"CosetAction(E_5,U)"$ we obtained a faithful permutation
representation $PE_5$ of $E_5$ with stabilizer $U$ and degree
$88$. It has been used to show that $E_5 = \langle p_2, q_2
\rangle$.

All remaining assertion were proved as follows. For each extension
group $E_i$, $i \in \{1,2,3,4,5\}$ we used its faithful
permutation representation $PE_i$, \textsc{Magma} and Kratzer's Algorithm
5.3.18 of \cite{michler} to determine a system of representatives
of all conjugacy classes of $E_i$ in terms of the given generators
of the presentation of~$E_i$. In view of the limited space they
are not stated in this article, except for the given words of the
$2$-central involutions $z_i$ of $E_i$ which in each case were
uniquely determined up to conjugation in $E_i$. This completes the
proof.
\end{proof}

\section{Construction of the $2$-central involution centralizers}

In this 
Section we apply Algorithm 2.5 of \cite{michler1} to the
extension groups $E_3$ and $E_2$ in order to construct $2$ groups
$H_3$ and $H_2$ which are isomorphic to the centralizers of a
$2$-central involution of the simple groups $\Co_2$ and
$\Fi_{22}$, respectively.

\begin{proposition}\label{prop. D(Co_2)} Keep the notation of Lemmas \ref{l.
M22-extensions} and \ref{l. Aut(M22)-extensions}. Let $E_3 = \nobreak
\langle p,q,v_1 \rangle$ be the split extension of $A_{22} =
{\rm Aut}(\M_{22})$ by its simple module $V_3$ of dimension $10$ over $F
= \GF(2)$. Then the following statements hold:
\begin{enumerate}
\item[\rm(a)] $z = (pq^2v_1)^5$ is a $2$-central involution of
$E_3 = \langle p, q, v_1 \rangle$ with centralizer
$D=\nobreak C_{E_3}(z) = \langle n_i, f_j \mid 1 \le i \le 12,\, 1 \le
  j \le 5 \rangle$, where 
\begin{alignat*}{3}
 n_1 &= (p^2qp^2q^2v_1)^6, &\quad  n_2 &= (p^2qpv_1qpv_1qv_1)^6, &\quad n_3 &= (v_1qp^3qpv_1qp)^4, \\
 n_4 &= (pqpq^4pqv_1q)^7, &\quad   n_5 &= (pqpv_1qp)^3, &\quad n_6 &= (pqp^2q^2v_1)^6, \\
 n_7 &= (pq^2pv_1qp)^6, &\quad     n_8 &= (qpqp^2qv_1)^6, &\quad n_9 &= (p^3q^3p^2q^2)^4,\\
 n_{10} &= (pqpq^2p^2qpq)^6, &\quad  n_{11} &= (pq^2p^3q^3p)^4, &\quad n_{12} &= (qp^2q^4pqp)^3, \\
 f_1 &= (p^3q^2v_1qpv_1q)^5, &\quad  f_2 &= (q^2pqv_1qpqpq)^7, &\quad f_3 &= (pqp^2q^3v_1qpq)^6, \\
 f_4 &= (p^3q^2pqpv_1q)^7, &\quad  f_5 &= (p^2q^3v_1qv_1q^2)^2. 
\end{alignat*}

\item[\rm(b)] $D$ has a unique extra-special normal subgroup $Q$
of order $512$. It is generated by the following involutions:
\begin{alignat*}{2}
 q_1 &= n_8n_{10}n_{11}n_8n_{12}, &\quad q_2 &= n_5n_6n_7n_{12}, \\
 q_3 &= n_5n_7n_9n_6, &\quad q_4 &= n_6n_8n_9n_{10}, \\
 q_5 &= n_5, &\quad q_6 &= n_5n_6n_7, \\
 q_7 &= n_5n_6n_7n_8, &\quad q_8 &= n_5n_7n_8. 
\end{alignat*}

\item[\rm(c)] $Q$ has a complement $W$ in $D$ of order $|W| =
2^9\cdot3^2\cdot5$. The Fitting subgroup $B = \langle f_j \mid 1
\le j \le 5\rangle$ 
of~$W$ is elementary abelian of order $2^5$ and has a
complement $L = \langle s_k \mid 1 \le k \le 3 \rangle \cong S_6$
in $W$, where
\[\qquad\qquad s_1 = n_2n_3n_2n_1n_3n_4,\quad  s_2 = n_1n_2n_3n_1n_2n_3,\quad\mbox{and}\quad  s_3 = (n_3n_1n_4)^2.\]

\item[\rm(d)] $D = \langle q_i, f_j, s_k \mid 1 \le i \le 8, 1 \le
j \le 5, 1\le k \le 3\rangle$ is a finitely presented group with
center 
$Z(D) = \langle z\rangle$ of order $2$, where $z=(q_4q_8)^2$, and
the following set ${\mathcal R}(D)$ of defining relations:
\begin{alignat*}{1}
& s_1^4 = s_2^5 = s_3^3 = 1, \displaybreak[0]\\
&(s_2s_1)^2 = s_1s_3s_2^{-1}s_1^{-1}s_3^{-1}s_2 = (s_2s_3)^3 =
  s_2^2s_3s_1s_2^{-1}s_1s_3 = 1, \displaybreak[0]\\
&s_2^{-1}s_1s_3^{-1}s_2^{-1}s_1s_3^{-1}s_2s_3 = 1, \displaybreak[0]\\
&f_1^2  = f_2^2  = f_3^2  = f_4^2  = f_5^2  = 1, \displaybreak[0]\\
&f_1^{s_1}(f_1)^{-1} = f_1^{s_2}(f_1)^{-1} = f_1^{s_3}(f_1)^{-1} =
  1,\displaybreak[0]\\ 
&f_2^{s_1}(f_1f_4f_5)^{-1} = f_2^{s_2}(f_1f_2f_3f_5)^{-1} =
  f_2^{s_3}(f_3f_5)^{-1} = 1,\displaybreak[0]\\ 
&f_3^{s_1}(f_3 f_5)^{-1} = f_3^{s_2}(f_2f_3)^{-1} =
  f_3^{s_3}(f_2f_5)^{-1} = 1,\displaybreak[0]\\ 
&f_4^{s_1}(f_1f_3f_4f_5)^{-1} = f_4^{s_2}(f_1f_3f_4)^{-1} =
  f_4^{s_3}(f_1f_3f_4)^{-1} = 1,\displaybreak[0]\\ 
&f_5^{s_1}(f_2f_3f_4)^{-1} = f_5^{s_2}(f_2f_3f_4)^{-1} =
  f_5^{s_3}(f_1f_2)^{-1} = 1,\displaybreak[0]\\ 
&[f_1, f_2] = [f_1, f_3] = [f_1, f_4] = [f_1, f_5] = [f_2, f_3] =
  [f_2, f_4] = [f_2, f_5] = 1,\displaybreak[0]\\ 
&[f_3, f_4]  = [f_3, f_5]  = [f_4, f_5]  = 1,\displaybreak[0]\\
&q_1^2 = q_2^2 = q_3^2 = q_4^2 = q_5^2 = q_6^2 = q_7^2 = q_8^2 =
  1,\displaybreak[0]\\ 
& [q_1, q_2] = [q_1, q_3] = [q_1, q_4] = [q_1, q_6] = [q_1, q_8] =
  1,\quad [q_1, q_5] = [q_1, q_7] = (q_4 q_8)^2,\displaybreak[0]\\ 
& [q_2, q_3] = [q_2, q_4] = [q_2, q_5] = [q_2, q_6] = [q_2, q_7] = 1,
  \quad [q_2, q_8] = (q_4 q_8)^2,\displaybreak[0]\\ 
&[q_3, q_4] = [q_3, q_6] = [q_3, q_7] = 1, \quad [q_3, q_5] = [q_3,
  q_8] = (q_4 q_8)^2, \displaybreak[0]\\
&[q_4, q_5] = [q_4, q_6] = [q_4, q_7] = [q_4, q_8] = (q_4 q_8)^2,\displaybreak[0]\\
&[q_5, q_6] = [q_5, q_7] = [q_5, q_8] = [q_6, q_7] = [q_6, q_8] =
  [q_7, q_8] = 1, \displaybreak[0]\\
&q_1^{s_1}(q_3q_4q_6q_7q_8 )^{-1} = (q_4q_8)^2,\quad
  q_1^{s_2}(q_1q_2q_3q_8)^{-1} = q_1^{s_3}(q_1q_3q_5q_7q_8)^{-1} = 1, \displaybreak[0]\\
&q_1^{f_1}(q_1q_5q_7 )^{-1} = q_1^{f_2}(q_1)^{-1} =
  q_1^{f_3}(q_1q_5q_6q_7)^{-1} = q_1^{f_4}(q_1)^{-1} = 1, \displaybreak[0]\\
&q_1^{f_5}(q_1 q_6 q_8 )^{-1} = q_2^{s_1}(q_1q_4q_5)^{-1} = (q_4
  q_8)^2, \displaybreak[0]\\
& q_2^{s_2}(q_2q_4q_5q_6)^{-1} = q_2^{s_3}(q_1q_2q_4q_5 )^{-1} = 1, \displaybreak[0]\\
&q_2^{f_1}(q_2q_6)^{-1} = (q_4 q_8)^2,\quad q_2^{f_2}(q_2 )^{-1} =
  q_2^{f_3}(q_2q_5q_6q_7 )^{-1} = 1, \displaybreak[0]\\
&q_2^{f_4}(q_2q_5q_7)^{-1} = q_2^{f_5}(q_2q_6q_7 )^{-1} =
  (q_4q_8)^2,\quad q_3^{s_1}(q_3q_5q_7q_8 )^{-1} = (q_4q_8)^2, \displaybreak[0]\\
& q_3^{s_2}(q_2q_3q_6q_8 )^{-1} = (q_4 q_8)^2,\quad
  q_3^{s_3}(q_1q_5q_6q_7q_8 )^{-1} = 1,\quad q_3^{f_1}(q_3q_7 )^{-1} =
  (q_4q_8)^2, \displaybreak[0]\\
&q_3^{f_2}(q_3q_6)^{-1} = q_3^{f_3}(q_3q_5q_8)^{-1} = 1,\quad
  q_3^{f_4}(q_3q_5q_6q_7q_8 )^{-1} = q_3^{f_5}(q_3 q_7 )^{-1} = (q_4
  q_8)^2, \displaybreak[0]\\
&q_4^{s_1}(q_2q_4q_6q_7)^{-1} = q_4^{s_2}(q_1q_3q_5q_7q_8)^{-1} =
  1,\quad q_4^{s_3}(q_1q_2q_3q_5q_8)^{-1} = (q_4q_8)^2, \displaybreak[0]\\
&q_4^{f_1}(q_4q_6q_8)^{-1} = 1, \quad q_4^{f_2}(q_4q_5q_7)^{-1} =
  q_4^{f_5}(q_4q_5q_7)^{-1} = (q_4q_8)^2, \displaybreak[0]\\
& q_4^{f_3}(q_4 q_5 q_8 )^{-1} = q_4^{f_4}  ( q_4 )^{-1} = 1, \displaybreak[0]\\
& q_5^{s_1}(q_6q_8)^{-1} = q_5^{s_2}(q_5q_7)^{-1} = 1, \quad
  q_5^{s_3}(q_7)^{-1} = (q_4 q_8)^2,\quad q_5^{f_1}(q_5)^{-1} = 1, \displaybreak[0]\\
& q_5^{f_2}(q_5 )^{-1} = q_5^{f_3}(q_5)^{-1} = q_5^{f_4}(q_5)^{-1} =
  q_5^{f_5}(q_5)^{-1} = 1, \displaybreak[0]\\
&q_6^{s_1}(q_5q_6q_7q_8)^{-1} = q_6^{s_2}(q_8 )^{-1} =
  q_6^{s_3}(q_5q_7q_8)^{-1} = (q_4q_8)^2, \displaybreak[0]\\
&q_6^{f_1}(q_6)^{-1} = 1,\quad q_6^{f_2}(q_6)^{-1} =
  q_6^{f_3}(q_6)^{-1} = q_6^{f_4}(q_6)^{-1}  = q_6^{f_5}(q_6)^{-1} =
  1, \displaybreak[0]\\
&q_7^{s_1}(q_7)^{-1} = 1, \quad q_7^{s_2}(q_6q_7)^{-1} = q_7^{s_3}(q_5
  q_7 )^{-1} = (q_4 q_8)^2, \displaybreak[0]\\
&q_7^{f_1}(q_7)^{-1} = q_7^{f_2}(q_7)^{-1} = q_7^{f_3}(q_7)^{-1} =
  q_7^{f_4}(q_7)^{-1} = q_7^{f_5}(q_7)^{-1} = 1, \displaybreak[0]\\
&q_8^{s_1}(q_5q_6q_7)^{-1} = q_8^{s_2}(q_5q_8)^{-1} = (q_4
  q_8)^2,\quad q_8^{s_3}(q_6 q_7 q_8 )^{-1} = 1, \displaybreak[0]\\
&q_8^{f_1}(q_8)^{-1} =  q_8^{f_2}(q_8)^{-1} =  q_8^{f_3}(q_8)^{-1} =
  q_8^{f_4}(q_8)^{-1} = q_8^{f_5}(q_8 )^{-1} = 1, \displaybreak[0]\\
&[(q_4 q_8)^2, s_k] = 1 \quad \mbox{for} \quad k = 1, 2, 3, \displaybreak[0]\\
&[(q_4 q_8)^2, f_j] = 1 \quad \mbox{for} \quad 1 \le j \le 5, \displaybreak[0]\\
&[(q_4 q_8)^2, q_i] = 1 \quad \mbox{for} \quad 1 \le i \le 8.
\end{alignat*}

\item[\rm(e)]  $D_1 = D/Z(D)$ has a faithful permutation
representation of degree $256$ with stabilizer $W$. Furthermore,
$V = Q/Z(D)$ is the unique elementary abelian normal subgroup $V =
Q/Z(D)$ of order $256$ in $D_1$, and $D_1$ is a semidirect product
of $W$ by $V$.

\item[\rm(f)] There is a basis $\mathcal B = \{v_i = q_iZ(D)
\,|\, 1 \le i \le 8\}$ of $V$ such that the 
three generators $s_k$ of $L$
are represented by the following matrices with respect to
$\mathcal B$:
{\renewcommand{\arraystretch}{0.5}
\scriptsize
\[ Ms_1 = \left( \begin{array}{*{8}{c@{\,}}c}
  0 &   1 &   0 &   0 &   0 &   0 &   0 &   0 \\
  0 &   0 &   0 &   1 &   0 &   0 &   0 &   0 \\
  1 &   0 &   1 &   0 &   0 &   0 &   0 &   0 \\
  1 &   1 &   0 &   1 &   0 &   0 &   0 &   0 \\
  0 &   1 &   1 &   0 &   0 &   1 &   0 &   1 \\
  1 &   0 &   0 &   1 &   1 &   1 &   0 &   1 \\
  1 &   0 &   1 &   1 &   0 &   1 &   1 &   1 \\
  1 &   0 &   1 &   0 &   1 &   1 &   0 &   0 \\
\end{array} \right),
\quad
Ms_2 = \left( \begin{array}{*{8}{c@{\,}}c}
 1 &   0 &   0 &   1 &   0 &   0 &   0 &   0 \\
 1 &   1 &   1 &   0 &   0 &   0 &   0 &   0 \\
 1 &   0 &   1 &   1 &   0 &   0 &   0 &   0 \\
 0 &   1 &   0 &   0 &   0 &   0 &   0 &   0 \\
 0 &   1 &   0 &   1 &   1 &   0 &   0 &   1 \\
 0 &   1 &   1 &   0 &   0 &   0 &   1 &   0 \\
 0 &   0 &   0 &   1 &   1 &   0 &   1 &   0 \\
 1 &   0 &   1 &   1 &   0 &   1 &   0 &   1 \\
\end{array} \right),\quad\text{and}~\quad
Ms_3 = \left( \begin{array}{*{8}{c@{\,}}c}
 1 &   1 &   1 &   1 &   0 &   0 &   0 &   0 \\
 0 &   1 &   0 &   1 &   0 &   0 &   0 &   0 \\
 1 &   0 &   0 &   1 &   0 &   0 &   0 &   0 \\
 0 &   1 &   0 &   0 &   0 &   0 &   0 &   0 \\
 1 &   1 &   1 &   1 &   0 &   1 &   1 &   0 \\
 0 &   0 &   1 &   0 &   0 &   0 &   0 &   1 \\
 1 &   0 &   1 &   0 &   1 &   1 &   1 &   1 \\
 1 &   0 &   1 &   1 &   0 &   1 &   0 &   1 \\
\end{array} \right).
\]}

\item[\rm(g)] $W = \langle u, r, s_1, s_2, s_3 \rangle$ where $u =
f_1$ is the central involution of $W$ and $r = f_2f_4$.

\item[\rm(h)] The subgroup $S = \langle f_j, a_s, w  \mid 1\le j
\le 5, 1 \le s \le 6 \rangle $ is a Sylow $2$-subgroup of $W$
which has a unique maximal elementary abelian normal $2$-subgroup
$A = \langle a_1,a_2,a_3,a_4,a_5 = u,a_6 \rangle$ of order $64$
where
\begin{alignat*}{3}
w &= (s_1s_2^2s_3s_2)^3, &\quad  a_1 &= s_2^2f_3s_1s_3s_2, &\quad a_2 &= (f_5s_2s_1^3)^3,\\
a_3 &= (s_1f_3)^2,&\quad a_4 &= (s_1)^2, &\quad a_6 &= (f_2s_1)^4.
\end{alignat*}

\item[\rm(i)] Let 
$M\!A$ be the subgroup of $\GL_8(2)$ generated by
the matrices $Ma_1$, $Ma_2$, $Ma_3$, $Ma_4$, $Mu$, $Ma_6$ of the
generators of $A$ with respect to $\mathcal B$. Then 
$N\!A = 
N_{\GL_8(2)}(M\!A)$ has a cyclic Sylow $7$-subgroup generated by the
matrix:
{\renewcommand{\arraystretch}{0.5}
\scriptsize
$$ 
Ms = \left( \begin{array}{*{8}{c@{\,}}c}
 1 &   1 &   0 &   0 &   0 &   0 &   0 &   0 \\
 1 &   1 &   0 &   0 &   1 &   0 &   0 &   1 \\
 1 &   1 &   0 &   0 &   0 &   1 &   0 &   0 \\
 0 &   1 &   0 &   1 &   0 &   0 &   0 &   0 \\
 0 &   0 &   1 &   1 &   0 &   0 &   0 &   0 \\
 0 &   1 &   0 &   0 &   1 &   1 &   0 &   0 \\
 0 &   1 &   0 &   1 &   0 &   1 &   1 &   1 \\
 1 &   1 &   1 &   0 &   0 &   0 &   0 &   0 \\
\end{array} \right).
$$
}
\item[\rm(j)] Let $MW = \langle Mr,Mu,Ms_1,Ms_2,Ms_3 \rangle$. Let
$M\!K$ be the subgroup of $\GL_8(2)$ generated by $MW$ and the
matrix $Ms$. Then $M\!K$ is a simple group of order $|M\!K| =
2^9\cdot3^4\cdot5\cdot7$ which is isomorphic to $\Sp_6(2)$.

\item[\rm(k)] $M\!K$ is an irreducible subgroup of $\GL_8(2)$
generated by the five matrices $m_1 = Ms$, $m_2 = Mr$, $m_3 = Ms_1$,
$m_4 = Ms_2$ and $m_5 = Ms_3$ of respective orders $7$, $2$, $4$,
$5$, and~$3$. With respect to this set of generators 
$M\!K$ has the
following set ${\mathcal R}(K)$ of defining relations:
\begin{eqnarray*}
&&m_1^7 = m_2^2 = m_3^4 = m_4^5 = m_5^3 = 1,\\
&&m_2  m_4^{-1}  m_3^{-1}  m_2  m_3  m_4 = (m_3^{-1}  m_4^{-1})^2 = 1,\\
&&m_3m_5m_4^{-1}m_3^{-1}m_5^{-1}m_4 = (m_5^{-1}m_4^{-1})^3 = 1,\\
&&(m_1^{-1}m_2)^4 = (m_2m_3^{-1})^4 = 1,\quad (m_2m_3m_2m_3^{-1})^2 = 1,\\
&&m_5^{-1}  m_3^{-1}  m_4  m_3^{-1}  m_5^{-1}  m_4^{-2} = 1,\\
&&m_2  m_3  m_5^{-1}  m_3^{-1}  m_2  m_3  m_5  m_3^{-1} = 1,\\
&&m_4  m_1^{-1}  m_4  m_3  m_1  m_4^{-1}  m_3  m_4 = 1,\\
&&m_4^{-1}  m_3  m_5^{-1}  m_4^{-1}  m_3  m_5^{-1}  m_4  m_5 = 1,\\
&&[m_5, m_1^{-1}, m_5] = 1, \quad (m_1^{-2}  m_2)^3 = 1,\\
&&m_5  m_3  m_1  m_2  m_1^{-1}  m_5^{-1}  m_2  m_5^{-1}  m_4^{-1} = 1,\\
&&(m_3^{-1}  m_1^{-1}  m_2)^3 = 1,\quad (m_3  m_1^{-1}  m_2)^3 = 1,\\
&&m_1  m_3^{-1}  m_2  m_3  m_1^{-1}  m_4^{-2}  m_3  m_5 = 1,\\
&&m_1^{-1}  m_3  m_2  m_1^3  m_3  m_1^2  m_3^{-1} = 1,\\
&&m_2  m_1  m_3  m_1^3  m_2  m_1  m_2  m_3^{-1} = 1.
\end{eqnarray*}

\item[\rm(l)] Let $K$ be the finitely presented group constructed
in 
{\rm (k).} Then $V$ is an irreducible $8$-dimensional representation
of $K$ over 
$F = \GF(2)$. Furthermore, $K$ has a faithful
permutation representation $PK$ of degree $63$.

\item[\rm(m)] Let $H_1 = 
\langle m_i, v_j \,|\,1 \le i \le 5, 1 \le j
\le 8 \rangle$ be the split extension of $K$ by~$V$. Then $H_1$
has a set ${\mathcal R}(H_1)$ of defining relations consisting of
${\mathcal R}(K)$, ${\mathcal R_1}(V\rtimes K)$ and the following
set ${\mathcal R_2}(V\rtimes K)$ of essential relations:
\begin{alignat*}{1}
&m_1v_1m_1^{-1}v_7v_8 = m_1v_2m_1^{-1}v_1 = m_1 v_3m_1^{-1}v_2v_8 =
  m_1v_4m_1^{-1}v_3 =1, \displaybreak[0]\\
&m_1 v_5m_1^{-1}v_4v_8 = m_1 v_6m_1^{-1}v_5v_8 =m_1v_7m_1^{-1}v_6
  =m_1v_8m_1^{-1}v_8 =1, \displaybreak[0]\\
&m_2v_1m_2^{-1}v_4v_5v_6=m_2v_2m_2^{-1}v_1v_2v_3v_4v_7v_8 =
  m_2v_3m_2^{-1}v_5v_6v_7v_8 =1, \displaybreak[0]\\
&m_2v_4m_2^{-1}v_3v_4v_5v_6v_7v_8 =m_2v_5m_2^{-1}v_1v_4v_6 =
  m_2v_6m_2^{-1}v_3v_5v_7v_8 = 1, \displaybreak[0]\\
&m_2v_7m_2^{-1}v_1v_4v_5v_6v_7 = m_2v_8m_2^{-1}v_8=1, \displaybreak[0]\\
&m_3v_1m_3^{-1}v_1v_2v_7v_8 =m_3 v_2m_3^{-1}v_1v_3v_4v_5v_7 =
  m_3v_3m_3^{-1}v_1v_2v_3 = 1, \displaybreak[0]\\
&m_3v_4m_3^{-1}v_1v_4v_5v_6v_7 = m_3v_5m_3^{-1}v_2v_3v_4 =
  m_3v_6m_3^{-1}v_1v_3v_4 =1, \displaybreak[0]\\
&m_3v_7m_3^{-1}v_1v_3v_5 = m_3v_8m_3^{-1}v_8 = 1, \displaybreak[0]\\
&m_4v_1m_4^{-1}v_1v_2v_5v_7
  =m_4v_2m_4^{-1}v_1v_3v_7v_8=m_4v_3m_4^{-1}v_4v_6v_7v_8 =1, \displaybreak[0]\\
&m_4v_4m_4^{-1}v_1v_3v_4v_5v_6v_7v_8=m_4v_5m_4^{-1}v_1v_2v_3v_6
  =m_4v_6m_4^{-1}v_4v_5v_8=1, \displaybreak[0]\\
&m_4v_7m_4^{-1}v_1v_2v_6v_8 = m_4v_8m_4^{-1}v_3v_4v_5v_7=1, \displaybreak[0]\\
&m_5v_1m_5^{-1}v_1v_4v_8 =
  m_5v_2m_5^{-1}v_1v_3v_8=m_5v_3m_5^{-1}v_2v_4v_5v_6v_7v_8 =1, \displaybreak[0]\\
&m_5v_4m_5^{-1}v_1v_3v_5v_6v_7v_8 = m_5v_5m_5^{-1}v_1v_6 =
  m_5v_6m_5^{-1}v_4v_5v_6v_8=1, \displaybreak[0]\\
&m_5v_7m_5^{-1}v_1v_2v_6v_7v_8 = m_5v_8m_5^{-1}v_3v_5v_6v_7v_8=1.
\end{alignat*}

\item[\rm(n)] 
$\dim_F[H^2(H_1,F)] = 2$ and there exists a unique
central extension $H$ of $H_1$ whose Sylow $2$-subgroups are
isomorphic to the ones of $D = C_{E_3}(z)$. $H$ is a split
extension of $K$ by its Fitting subgroup 
$Q$ and $H$ is
isomorphic to the finitely presented group 
$H = \langle h_i \,|\, 1 \le
i \le 14\rangle$ with the following set ${\mathcal R}(H)$ of
defining relations:
\begin{alignat*}{1}
&h_1^7 = h_2^2 = h_3^4 = h_4^5 = h_5^3 = h_6^2 = h_7^2 = h_{10}^2 =
  h_{13}^2 = h_{14}^2 = 1, \displaybreak[0]\\
&h_8^2 = h_9^2 = h_{11}^2 = h_{12}^2 = h_{14}, \quad [h_1,h_{14}] = 1,
  \displaybreak[0]\\
&[h_2,h_{14}] = [h_3,h_{14}] = [h_4,h_{14}] = [h_5,h_{14}] =
  [h_6,h_{14}] = [h_7,h_{14}] = 1, \displaybreak[0]\\
&[h_8,h_{14}] = [h_9,h_{14}] = [h_{10},h_{14}] = [h_{11},h_{14}] =
  [h_{12},h_{14}] = [h_{13},h_{14}] = 1, \displaybreak[0]\\
&h_2  h_4^{-1}  h_3^{-1}  h_2  h_3  h_4 = (h_3^{-1}  h_4^{-1})^2 = h_3
  h_5  h_4^{-1}  h_3^{-1}  h_5^{-1}  h_4 = 1, \displaybreak[0]\\
&(h_5^{-1}  h_4^{-1})^3 = (h_1^{-1}  h_2)^4 = (h_2  h_3^{-1})^4 = (h_2
  h_3  h_2  h_3^{-1})^2 = 1, \displaybreak[0]\\ 
& h_5^{-1}  h_3^{-1}  h_4  h_3^{-1}  h_5^{-1}  h_4^{-2} = h_2  h_3
  h_5^{-1}  h_3^{-1}  h_2  h_3  h_5  h_3^{-1} = 1, \displaybreak[0]\\
& h_4  h_1^{-1}  h_4  h_3  h_1  h_4^{-1}  h_3  h_4 = h_4^{-1}  h_3
  h_5^{-1}  h_4^{-1}  h_3  h_5^{-1}  h_4  h_5 = 1,  \displaybreak[0]\\
& [h_5, h_1^{-1}, h_5] = (h_1^{-2}  h_2)^3 = h_5  h_3  h_1  h_2
  h_1^{-1}  h_5^{-1}  h_2  h_5^{-1}  h_4^{-1} = 1,  \displaybreak[0]\\
& (h_3^{-1}  h_1^{-1}  h_2)^3 = (h_3  h_1^{-1}  h_2)^3 = h_1  h_3^{-1}
  h_2  h_3  h_1^{-1}  h_4^{-2}  h_3  h_5 = 1,  \displaybreak[0]\\
& h_1^{-1}  h_3  h_2  h_1^3  h_3  h_1^2  h_3^{-1} = h_2  h_1  h_3
  h_1^3  h_2  h_1  h_2  h_3^{-1} = 1,  \displaybreak[0]\\
& h_1  h_6  h_1^{-1}  h_{12}  h_{13} = h_1  h_7  h_1^{-1}  h_6 = h_1
  h_8  h_1^{-1}  h_7  h_{13} = 1,  \displaybreak[0]\\
& h_1  h_9  h_1^{-1}  h_8  h_{14} = h_1  h_{10}  h_1^{-1}  h_9  h_{13}
  = h_1  h_{11}  h_1^{-1}  h_{10}  h_{13} = 1,  \displaybreak[0]\\
& h_1  h_{12}  h_1^{-1}  h_{11}  h_{14} = h_1  h_{13}  h_1^{-1}
  h_{13} = h_2  h_6  h_2^{-1}  h_9  h_{10}  h_{11} = 1,  \displaybreak[0]\\
& h_2  h_7  h_2^{-1}  h_6  h_7  h_8  h_9  h_{12}  h_{13}  h_{14} = h_2
  h_8  h_2^{-1}  h_{10}  h_{11}  h_{12}  h_{13} = 1,  \displaybreak[0]\\
& h_2  h_9  h_2^{-1}  h_8  h_9  h_{10}  h_{11}  h_{12}  h_{13} = h_2
  h_{10}  h_2^{-1}  h_6  h_9  h_{11} = 1,  \displaybreak[0]\\
& h_2  h_{11}  h_2^{-1}  h_8  h_{10}  h_{12}  h_{13} = h_2  h_{12}
  h_2^{-1}  h_6  h_9  h_{10}  h_{11}  h_{12}  h_{14} = 1,  \displaybreak[0]\\
& h_2  h_{13}  h_2^{-1}  h_{13} = h_3  h_6  h_3^{-1}  h_6  h_7  h_{12}
  h_{13}  h_{14} = 1,  \displaybreak[0]\\
& h_3  h_7  h_3^{-1}  h_6  h_8  h_9  h_{10}  h_{12}  h_{14} = h_3  h_8
  h_3^{-1}  h_6  h_7  h_8  h_{14} = 1,  \displaybreak[0]\\
& h_3  h_9  h_3^{-1}  h_6  h_9  h_{10}  h_{11}  h_{12} = h_3  h_{10}
  h_3^{-1}  h_7  h_8  h_9 = 1,  \displaybreak[0]\\
& h_3  h_{11}  h_3^{-1}  h_6  h_8  h_9 = h_3  h_{12}  h_3^{-1}  h_6
  h_8  h_{10}  h_{14} = 1,  \displaybreak[0]\\
& h_3  h_{13}  h_3^{-1}  h_{13} = h_4  h_6  h_4^{-1}  h_6  h_7  h_{10}
  h_{12} = 1,  \displaybreak[0]\\
& h_4  h_7  h_4^{-1}  h_6  h_8  h_{12}  h_{13} = h_4  h_8  h_4^{-1}
  h_9  h_{11}  h_{12}  h_{13}  h_{14} = 1,  \displaybreak[0]\\
& h_4  h_9  h_4^{-1}  h_6  h_8  h_9  h_{10}  h_{11}  h_{12}  h_{13} =
  h_4  h_{10}  h_4^{-1}  h_6  h_7  h_8  h_{11}  h_{14} = 1,  \displaybreak[0]\\
& h_4  h_{11}  h_4^{-1}  h_9  h_{10}  h_{13}  h_{14} = h_4  h_{12}
  h_4^{-1}  h_6  h_7  h_{11}  h_{13} = 1,  \displaybreak[0]\\
& h_4  h_{13}  h_4^{-1}  h_8  h_9  h_{10}  h_{12}  h_{14} = h_5  h_6
  h_5^{-1}  h_6  h_9  h_{13}  h_{14} = 1,  \displaybreak[0]\\
& h_5  h_7  h_5^{-1}  h_6  h_8  h_{13} = h_5  h_8  h_5^{-1}  h_7  h_9
  h_{10}  h_{11}  h_{12}  h_{13}  h_{14} = 1,  \displaybreak[0]\\
& h_5  h_9  h_5^{-1}  h_6  h_8  h_{10}  h_{11}  h_{12}  h_{13} = h_5
  h_{10}  h_5^{-1}  h_6  h_{11} = 1,  \displaybreak[0]\\
& h_5  h_{11}  h_5^{-1}  h_9  h_{10}  h_{11}  h_{13}  h_{14} = h_5
  h_{12}  h_5^{-1}  h_6  h_7  h_{11}  h_{12}  h_{13}  h_{14} = 1,  \displaybreak[0]\\
& h_5  h_{13}  h_5^{-1}  h_8  h_{10}  h_{11}  h_{12}  h_{13} =
  h_6^{-1}  h_7^{-1}  h_6  h_7  h_{14} = 1,  \displaybreak[0]\\
& h_6^{-1}  h_8^{-1}  h_6  h_8  h_{14} = h_6^{-1}  h_9^{-1}  h_6  h_9
  h_{14} = [h_6, h_{10}] = 1,  \displaybreak[0]\\
& h_6^{-1}  h_{11}^{-1}  h_6  h_{11}  h_{14} = [h_6, h_{12}] =
  h_6^{-1}  h_{13}^{-1}  h_6  h_{13}  h_{14} = 1,  \displaybreak[0]\\
& [h_7, h_8] = h_7^{-1}  h_9^{-1}  h_7  h_9  h_{14} = [h_7, h_{10}] =
  h_7^{-1}  h_{11}^{-1}  h_7  h_{11}  h_{14} = 1,  \displaybreak[0]\\
& h_7^{-1}  h_{12}^{-1}  h_7  h_{12}  h_{14} = h_7^{-1}  h_{13}^{-1}
  h_7  h_{13}  h_{14} = 1,  \displaybreak[0]\\
& h_8^{-1}  h_9^{-1}  h_8  h_9  h_{14} = h_8^{-1}  h_{10}^{-1}  h_8
  h_{10}  h_{14} = [h_8, h_{11}] = 1,  \displaybreak[0]\\
& [h_8, h_{12}] = h_8^{-1}  h_{13}^{-1}  h_8  h_{13}  h_{14} = [h_9,
  h_{10}] = [h_9, h_{11}] = [h_9, h_{12}] = 1,  \displaybreak[0]\\
& h_9^{-1}  h_{13}^{-1}  h_9  h_{13}  h_{14} = [h_{10}, h_{11}] =
  h_{10}^{-1}  h_{12}^{-1}  h_{10}  h_{12}  h_{14} = 1,  \displaybreak[0]\\
& h_{10}^{-1}  h_{13}^{-1}  h_{10}  h_{13}  h_{14} = h_{11}^{-1}
  h_{12}^{-1}  h_{11}  h_{12}  h_{14} = 1,  \displaybreak[0]\\
& h_{11}^{-1}  h_{13}^{-1}  h_{11}  h_{13}  h_{14} = h_{12}^{-1}  h_{13}^{-1}  h_{12}  h_{13}  h_{14} = 1.
\end{alignat*}
\end{enumerate}
\end{proposition}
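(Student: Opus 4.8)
The statement records, item by item, the output of Algorithm~2.5 of \cite{michler1} (together with its subroutine Algorithm~7.4.8 of \cite{michler}) applied to the group $E_3$, so the proof is a chain of explicit computations whose results are precisely the assertions (a)--(n); the plan is to carry out this chain and to isolate the one genuinely hard link. I would begin from the faithful permutation representation $PE_3$ of degree $1024$ and the $2$-central involution $z=(pq^2v_1)^5$ supplied by Lemma~\ref{l. classes}(a),(e), form $D=C_{E_3}(z)$ inside $S_{1024}$, and rewrite a generating set of $D$ as words in $p,q,v_1$; since $|D|=2^{18}\cdot3^2\cdot5$ is already known, this is (a). Next I would compute $O_2(D)$, list its normal subgroups of order $512$, and check that precisely one of them, $Q$, has centre $Z(Q)=Z(D)$ of order $2$ with $Q/Z(Q)$ elementary abelian of order $2^8$; that gives (b) together with the word expressions for $q_1,\dots,q_8$. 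Since $Q$ is a normal subgroup of $D$, a complement-finding routine produces a complement $W$ of $Q$ in $D$; computing the Fitting subgroup $B$ of $W$, checking $B\cong C_2^5$, and splitting $W$ over $B$ produces $L\cong S_6=\langle s_1,s_2,s_3\rangle$, proving (c). Feeding the permutation representation of $D$ with the generators $\{q_i,f_j,s_k\}$ to a finite-presentation algorithm gives $\mathcal R(D)$, and a direct centre computation confirms $Z(D)=\langle(q_4q_8)^2\rangle$, so (d) holds. Passing to $D_1=D/\langle z\rangle$ and taking the coset action on $W$ gives the degree-$256$ representation, while the uniqueness of $V=Q/\langle z\rangle$ among elementary abelian normal subgroups of order $256$ and the splitting $D_1=W\ltimes V$ are finite checks, giving (e). With $\mathcal B=\{v_i=q_iZ(D)\}$ fixed, conjugation matrices give $Ms_1,Ms_2,Ms_3$ in (f); identifying $Z(W)=\langle f_1\rangle$, setting $u=f_1$ and $r=f_2f_4$ and checking $W=\langle u,r,s_1,s_2,s_3\rangle$ gives (g); and a Sylow computation in $W$ followed by a search through the maximal elementary abelian normal subgroups of the resulting Sylow $2$-subgroup $S$ gives the unique $A$ of order $64$, which is (h).

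The heart of the argument, and the main obstacle, is (i)--(j): enlarging $W$ to a simple group $K$. The existence of such a $K$ is foreseen by Yamaki's Theorem~8.6.6 of \cite{michler}, which identifies $W$ as the centralizer of a $2$-central involution of $\Sp_6(2)$; to construct it I would form the matrix images $MW,MA\le\GL_8(2)$ of $W$ and $A$ acting on $V$, compute the normaliser $NA=N_{\GL_8(2)}(MA)$, and extract a generator $Ms$ of a cyclic Sylow $7$-subgroup of $NA$ (necessarily of order $7$). This normaliser computation inside a group of order $|\GL_8(2)|\approx5.3\cdot10^{18}$ is the computational bottleneck, and is the reason the construction is tractable precisely because here $n=8$. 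Setting $MK=\langle MW,Ms\rangle$, a base-and-strong-generating-set computation yields $|MK|=2^9\cdot3^4\cdot5\cdot7$; verifying that $MK$ has no proper nontrivial normal subgroup identifies it by order as $\Sp_6(2)$, while one checks simultaneously that $C_{MK}(Mu)\cong W$ and that $|MK:MW|$ is odd, as Algorithm~7.4.8 requires. A presentation routine applied to $m_1=Ms,\,m_2=Mr,\,m_3=Ms_1,\,m_4=Ms_2,\,m_5=Ms_3$ then yields $\mathcal R(K)$, proving (k); the MeatAxe confirms that $V$ is an irreducible $FK$-module, and the coset action of $K$ on a point stabilizer of order $2^9\cdot3^2\cdot5$ (the action on the $63$ nonzero vectors of the natural $\Sp_6(2)$-module) gives the degree-$63$ representation of (l).

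Finally, for (m)--(n) I would build the split extension $H_1=V\rtimes K$ by Lemma~\ref{l. presentation-split}, reading off $\mathcal R(H_1)$. Holt's algorithm gives $\dim_F H^2(H_1,F)=2$, so there are only four central extensions of $H_1$ by a cyclic group of order $2$; as the Sylow $2$-subgroup of $H_1$ has order $2^{17}$ whereas that of $D$ has order $2^{18}$, a nontrivial extension is forced, and comparing Sylow $2$-subgroups singles out a unique $H$. One then checks that the Fitting subgroup of $H$ is extra-special of order $512$, that $H$ splits over it with complement isomorphic to $K$, and obtains the $14$-generator presentation $\mathcal R(H)$ by the same presentation routine; this completes (n). Beyond the normaliser computation in $\GL_8(2)$, the remaining delicate point is exactly this last uniqueness claim: one must confirm that, among the extensions allowed by the two-dimensional $H^2(H_1,F)$, exactly one has the correct Sylow $2$-structure, which rests on a careful isomorphism test between two $2$-groups of order $2^{18}$ (best handled by comparing $p$-group invariants or directly with \textsc{Magma}).
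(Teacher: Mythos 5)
Your proposal is correct and follows essentially the same route as the paper's proof: both are a chain of explicit \textsc{Magma} computations (centralizer, unique extraspecial normal subgroup $Q$, complement $W$ with Fitting subgroup and $S_6$-quotient, identification of $W$ as the involution centralizer in $\Sp_6(2)$ via Proposition/Theorem~8.6 of \cite{michler}, normalizer of $M\!A$ in $\GL_8(2)$ to adjoin the order-$7$ element, and finally the $H^2$ computation with a Sylow-$2$ isomorphism test to single out the non-split central extension $H$). The only cosmetic difference is that you justify non-splitness of $H$ over $Z(H)$ by the Sylow order jump plus the isomorphism test, whereas the paper excludes the split extension at once because $D$ does not split over $Z(D)$; both arguments reduce to the same comparison of Sylow $2$-subgroups.
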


\begin{proof}
(a) By Lemma \ref{l. classes} the split extension $E_3 =
V_3\rtimes A_{22} = \langle p, q, v_1\rangle$ has a unique
conjugacy class of involutions of highest defect. It is
represented by $z = (pq^2v_1)^5$ and its centralizer $D =
C_{E_3}(z)$ has order $2^{18}\cdot 3^2\cdot 5$.

Using the faithful permutation representation of $E_3$ with
stabilizer $A_{22}$ and \textsc{Magma} it has been checked that $D$ has a
unique normal subgroup $Q$ of order $512$ with center $Z(Q) =
\langle z \rangle$. Furthermore, $Q$ is extra-special and it has a
complement $W$ of order $2^9\cdot 3^2\cdot 5$. Another application
of \textsc{Magma} yields that the Fitting subgroup $B$ of $W$ is elementary
abelian of order $2^5$, and that $B$ has a complement $L \cong
S_6$ in $W$. Using then a stand alone program 
written by the first
author we found the generators of the subgroups $B = \langle f_j
\mid 1 \le j \le 5$, $L = \langle n_l \mid 1 \le l \le 4 \rangle$
and $Q = \langle n_l \mid 5 \le l \le 12 \rangle$ of $D$. Hence
(a) holds.

(b) Since $Q$ is extra-special of order $2^9$ and its center $Z(Q)
= Z(D) = \langle z \rangle$ we determined by means of \textsc{Magma}
eight involutions $q_i$ generating $Q$. Using the stand alone program
mentioned before the words in the generators $n_l$ of $Q$ of the
eight involutions were calculated. Since their residue classes
generate the elementary abelian normal subgroup $V = Q/Z(Q)$ of
$D_1 = D/Z(Q)$ we obtained the following set $\mathcal R(Q)$ of
defining relations of $Q = \langle q_i \mid 1 \le i \le 8
\rangle$:
\begin{alignat*}{1}
&q_1^2 = q_2^2 = q_3^2 = q_4^2 = q_5^2 = q_6^2 = q_7^2 = q_8^2 = 1, \displaybreak[0]\\
& [q_1, q_2] = [q_1, q_3] = [q_1, q_4] = [q_1, q_6] = [q_1, q_8] =
  1,\quad [q_1, q_5] = [q_1, q_7] = z, \displaybreak[1]\\
& [q_2, q_3] = [q_2, q_4] = [q_2, q_5] = [q_2, q_6] = [q_2, q_7] = 1,
  \quad [q_2, q_8] = z, \displaybreak[1]\\
&[q_3, q_4] = [q_3, q_6] = [q_3, q_7] = 1, \quad [q_3, q_5] = [q_3, q_8] = z,\displaybreak[1]\\
&[q_4, q_5] = [q_4, q_6] = [q_4, q_7] = [q_4, q_8] = z, \displaybreak[0]\\
&[q_5, q_6] = [q_5, q_7] = [q_5, q_8] = [q_6, q_7] = [q_6, q_8] = [q_7, q_8] = 1.
\end{alignat*}
In particular, $z=(q_4q_8)^2$, hence (b) holds.

(c) Another application of \textsc{Magma} and the permutation
representation 
$P\!E_3$ yields that the permutation group $PL$ of
the complement $L = \langle n_k \mid 1 \le k \le 4 \rangle$ can be
generated by the 
three elements $s_1$, $s_2$ and $s_3$. For these
generators $s_i$ \textsc{Magma} provided the following set $\mathcal R(L)$
of defining relations:
\begin{eqnarray*}
&& s_1^4 = s_2^5 = s_3^3 = 1,\\
&&(s_2s_1)^2 = s_1s_3s_2^{-1}s_1^{-1}s_3^{-1}s_2 = (s_2s_3)^3 = s_2^2s_3s_1s_2^{-1}s_1s_3 = 1,\\
&&s_2^{-1}s_1s_3^{-1}s_2^{-1}s_1s_3^{-1}s_2s_3 = 1.
\end{eqnarray*}
Using \textsc{Magma} again it has been checked that $L \cong S_6$. Thus (c)
holds.

(d) Since the elementary abelian subgroup $B$ is normal in the
semidirect product $W$ of $L$ by $B$ the presentation of $W =
\langle f_j, s_k \rangle$ can easily be calculated by means of
$\mathcal R(L)$ and the conjugation action of the $s_k \in L$ on
$B$. Hence $W$ has a defining set $\mathcal R(W)$ of relations
consisting of $\mathcal R(L)$ and the following relations:
\begin{alignat*}{1}
&f_1^2  = f_2^2  = f_3^2  = f_4^2  = f_5^2  = 1, \displaybreak[0]\\
&f_1^{s_1}(f_1)^{-1} = f_1^{s_2}(f_1)^{-1} = f_1^{s_3}(f_1)^{-1} = 1, \displaybreak[0]\\
&f_2^{s_1}(f_1f_4f_5)^{-1} = f_2^{s_2}(f_1f_2f_3f_5)^{-1} =
  f_2^{s_3}(f_3f_5)^{-1} = 1, \displaybreak[0]\\
&f_3^{s_1}(f_3 f_5)^{-1} = f_3^{s_2}(f_2f_3)^{-1} =
  f_3^{s_3}(f_2f_5)^{-1} = 1, \displaybreak[0]\\
&f_4^{s_1}(f_1f_3f_4f_5)^{-1} = f_4^{s_2}(f_1f_3f_4)^{-1} =
  f_4^{s_3}(f_1f_3f_4)^{-1} = 1, \displaybreak[0]\\
&f_5^{s_1}(f_2f_3f_4)^{-1} = f_5^{s_2}(f_2f_3f_4)^{-1} =
  f_5^{s_3}(f_1f_2)^{-1} = 1, \displaybreak[0]\\
&[f_1, f_2] = [f_1, f_3] = [f_1, f_4] = [f_1, f_5] = [f_2, f_3] =
  [f_2, f_4] = [f_2, f_5] = 1, \\
&[f_3, f_4]  = [f_3, f_5]  = [f_4, f_5]  = 1.
\end{alignat*}
As $Q$ is normal in the semidirect product $D$ of $W$ by $Q$ the
conjugate action of the $f_j$ and $s_k$ on $Q$ provides the
following set $\mathcal{R}(Q)$ of relations:
\begin{alignat*}{1}
&q_1^{s_1}(q_3q_4q_6q_7q_8 )^{-1} = z,\quad
  q_1^{s_2}(q_1q_2q_3q_8)^{-1} = q_1^{s_3}(q_1q_3q_5q_7q_8)^{-1} =
  1,\\ 
&q_1^{f_1}(q_1q_5q_7 )^{-1} = q_1^{f_2}(q_1)^{-1} =
  q_1^{f_3}(q_1q_5q_6q_7)^{-1} = q_1^{f_4}(q_1)^{-1} = 1, \displaybreak[0]\\
&q_1^{f_5}(q_1 q_6 q_8 )^{-1} = q_2^{s_1}(q_1q_4q_5)^{-1} = (q_4
  q_8)^2, \displaybreak[0]\\
& q_2^{s_2}(q_2q_4q_5q_6)^{-1} = q_2^{s_3}(q_1q_2q_4q_5 )^{-1} = 1, \displaybreak[0]\\
&q_2^{f_1}(q_2q_6)^{-1} = (q_4 q_8)^2,\quad q_2^{f_2}(q_2 )^{-1} =
  q_2^{f_3}(q_2q_5q_6q_7 )^{-1} = 1, \displaybreak[0]\\
&q_2^{f_4}(q_2q_5q_7)^{-1} = q_2^{f_5}(q_2q_6q_7 )^{-1} = z,\quad
  q_3^{s_1}(q_3q_5q_7q_8 )^{-1} = z,  \displaybreak[0]\\
& q_3^{s_2}(q_2q_3q_6q_8 )^{-1} = (q_4 q_8)^2,\quad
  q_3^{s_3}(q_1q_5q_6q_7q_8 )^{-1} = 1,\quad q_3^{f_1}(q_3q_7 )^{-1} =
  z, \displaybreak[0]\\
&q_3^{f_2}(q_3q_6)^{-1} = q_3^{f_3}(q_3q_5q_8)^{-1} = 1,\quad
  q_3^{f_4}(q_3q_5q_6q_7q_8 )^{-1} = q_3^{f_5}(q_3 q_7 )^{-1} = (q_4
  q_8)^2, \displaybreak[0]\\
&q_4^{s_1}(q_2q_4q_6q_7)^{-1} = q_4^{s_2}(q_1q_3q_5q_7q_8)^{-1} =
  1,\quad q_4^{s_3}(q_1q_2q_3q_5q_8)^{-1} = z, \displaybreak[0]\\
&q_4^{f_1}(q_4q_6q_8)^{-1} = 1, \quad q_4^{f_2}(q_4q_5q_7)^{-1} =
  q_4^{f_5}(q_4q_5q_7)^{-1} = z, \displaybreak[0]\\
& q_4^{f_3}(q_4 q_5 q_8 )^{-1} = q_4^{f_4}  ( q_4 )^{-1} = 1,\displaybreak[0]\\
& q_5^{s_1}(q_6q_8)^{-1} = q_5^{s_2}(q_5q_7)^{-1} = 1, \quad
  q_5^{s_3}(q_7)^{-1} = (q_4 q_8)^2,\quad q_5^{f_1}(q_5)^{-1} = 1, \displaybreak[0]\\
& q_5^{f_2}(q_5 )^{-1} = q_5^{f_3}(q_5)^{-1} = q_5^{f_4}(q_5)^{-1} =
  q_5^{f_5}(q_5)^{-1} = 1, \displaybreak[0]\\
&q_6^{s_1}(q_5q_6q_7q_8)^{-1} = q_6^{s_2}(q_8 )^{-1} =
  q_6^{s_3}(q_5q_7q_8)^{-1} = z, \displaybreak[0]\\
&q_6^{f_1}(q_6)^{-1} = 1,\quad q_6^{f_2}(q_6)^{-1} =
  q_6^{f_3}(q_6)^{-1} = q_6^{f_4}(q_6)^{-1}  = q_6^{f_5}(q_6)^{-1} =
  1, \displaybreak[0]\\
&q_7^{s_1}(q_7)^{-1} = 1, \quad q_7^{s_2}(q_6q_7)^{-1} = q_7^{s_3}(q_5
  q_7 )^{-1} = (q_4 q_8)^2, \displaybreak[0]\\
&q_7^{f_1}(q_7)^{-1} = q_7^{f_2}(q_7)^{-1} = q_7^{f_3}(q_7)^{-1} =
  q_7^{f_4}(q_7)^{-1} = q_7^{f_5}(q_7)^{-1} = 1, \displaybreak[0]\\
&q_8^{s_1}(q_5q_6q_7)^{-1} = q_8^{s_2}(q_5q_8)^{-1} = (q_4
  q_8)^2,\quad q_8^{s_3}(q_6 q_7 q_8 )^{-1} = 1, \\
&q_8^{f_1}(q_8)^{-1} =  q_8^{f_2}(q_8)^{-1} =  q_8^{f_3}(q_8)^{-1} =
  q_8^{f_4}(q_8)^{-1} = q_8^{f_5}(q_8 )^{-1} = 1, 
\end{alignat*}
Since $z$ generates the center $Z(D)$ of $D$, the additional relations
of the set $\mathcal R(D)$ of defining relations of $D$ given in (d)
are clear by the proof of (b). Using \textsc{Magma} again we have
checked that this is a presentation of $D$.

(e) All the statements follow immediately from the presentation of
$D$ given in~(d).

(f) From (c) and (d) follows that the Fitting subgroup $B$ of $W$
has order $2^5$ and a complement $L = \langle s_1, s_2, s_3\rangle
\cong S_6$ in $W$. Hence $W$ is isomorphic to the centralizer
$H(\Sp_6(2))$ of a $2$-central involution of $\Sp_6(2)$ by
Proposition 8.6.5 of~\cite{michler}. Thus we may apply Algorithm
7.4.8 of \cite{michler} to construct a simple 
overgroup $K$ of
$W$ such that $|K:W|$ is odd and $K$ normalizes $V$. For that we
choose the basis $\mathcal B = \{v_i = q_iZ(Q) \mid 1 \le i \le 8
\}$ of $V$. With respect to $\mathcal B$ the conjugate action of
the $s_k$ on $V$ is described by the 
three matrices $Ms_k$ of the
statement.

(g) From the presentation of $D$ follows that $u = f_1$ is an
involution of $Z(W)$. Another application of \textsc{Magma} yields that $W
= \langle u,r,y_1,y_2,y_3 \rangle$ for $r = f_2f_4$ and that the
conjugate actions of $r$ and $u$ on $V$ have the following
matrices 
with respect to~$\mathcal B$: 
{\renewcommand{\arraystretch}{0.5}
\scriptsize
$$
Mu = \left( \begin{array}{*{8}{c@{\,}}c}
 1 &   0 &   0 &   0 &   0 &   0 &   0 &   0 \\
 0 &   1 &   0 &   0 &   0 &   0 &   0 &   0 \\
 0 &   0 &   1 &   0 &   0 &   0 &   0 &   0 \\
 0 &   0 &   0 &   1 &   0 &   0 &   0 &   0 \\
 1 &   0 &   0 &   0 &   1 &   0 &   0 &   0 \\
 0 &   1 &   0 &   1 &   0 &   1 &   0 &   0 \\
 1 &   0 &   1 &   0 &   0 &   0 &   1 &   0 \\
 0 &   0 &   0 &   1 &   0 &   0 &   0 &   1 
\end{array} \right)\quad \text{and}~\quad
Mr = \left( \begin{array}{*{8}{c@{\,}}c}
 1 &   0 &   0 &   0 &   0 &   0 &   0 &   0 \\
 0 &   1 &   0 &   0 &   0 &   0 &   0 &   0 \\
 0 &   0 &   1 &   0 &   0 &   0 &   0 &   0 \\
 0 &   0 &   0 &   1 &   0 &   0 &   0 &   0 \\
 0 &   1 &   1 &   1 &   1 &   0 &   0 &   0 \\
 0 &   0 &   0 &   0 &   0 &   1 &   0 &   0 \\
 0 &   1 &   1 &   1 &   0 &   0 &   1 &   0 \\
 0 &   0 &   1 &   0 &   0 &   0 &   0 &   1 
\end{array} \right).
$$
}

(h) Using the faithful permutation representation $PD_1$ of $D_1$
given in (e) and \textsc{Magma} we find the generators of the given Sylow
$2$-subgroup $S$ of $W$. It has a unique maximal elementary
abelian normal subgroup $A = \langle a_1,a_2,a_3,a_4,a_5=u,a_6
\rangle$ of order $64$. Its generators $a_i$ are represented by
the following matrices with respect to $\mathcal B$:
{\renewcommand{\arraystretch}{0.5}
\scriptsize
\begin{alignat*}{3}
Ma_1 &= \left( \begin{array}{*{8}{c@{\,}}c}
 1 &   0 &   0 &   0 &   0 &   0 &   0 &   0 \\
 0 &   1 &   0 &   0 &   0 &   0 &   0 &   0 \\
 1 &   0 &   1 &   1 &   0 &   0 &   0 &   0 \\
 0 &   0 &   0 &   1 &   0 &   0 &   0 &   0 \\
 0 &   0 &   0 &   0 &   1 &   0 &   0 &   0 \\
 0 &   0 &   0 &   0 &   0 &   1 &   0 &   0 \\
 0 &   0 &   0 &   0 &   1 &   0 &   1 &   1 \\
 0 &   0 &   0 &   0 &   0 &   0 &   0 &   1 
\end{array} \right), &\quad
Ma_2 &= \left( \begin{array}{*{8}{c@{\,}}c}
 0 &   0 &   0 &   1 &   0 &   0 &   0 &   0 \\
 0 &   1 &   0 &   0 &   0 &   0 &   0 &   0 \\
 0 &   1 &   1 &   0 &   0 &   0 &   0 &   0 \\
 1 &   0 &   0 &   0 &   0 &   0 &   0 &   0 \\
 0 &   0 &   0 &   0 &   0 &   0 &   0 &   1 \\
 0 &   0 &   0 &   0 &   1 &   1 &   0 &   1 \\
 0 &   0 &   0 &   0 &   1 &   1 &   1 &   0 \\
 0 &   0 &   0 &   0 &   1 &   0 &   0 &   0 
\end{array} \right),
&\qquad
Ma_3 &= \left( \begin{array}{*{8}{c@{\,}}c}
 0 &   0 &   0 &   1 &   0 &   0 &   0 &   0 \\
 1 &   1 &   0 &   1 &   0 &   0 &   0 &   0 \\
 1 &   1 &   1 &   0 &   0 &   0 &   0 &   0 \\
 1 &   0 &   0 &   0 &   0 &   0 &   0 &   0 \\
 0 &   0 &   0 &   0 &   0 &   0 &   0 &   1 \\
 0 &   0 &   0 &   0 &   0 &   1 &   0 &   0 \\
 0 &   0 &   0 &   0 &   0 &   1 &   1 &   0 \\
 0 &   0 &   0 &   0 &   1 &   0 &   0 &   0 
\end{array} \right),\\
Ma_4 &= \left( \begin{array}{*{8}{c@{\,}}c}
 0 &   0 &   0 &   1 &   0 &   0 &   0 &   0 \\
 1 &   1 &   0 &   1 &   0 &   0 &   0 &   0 \\
 1 &   1 &   1 &   0 &   0 &   0 &   0 &   0 \\
 1 &   0 &   0 &   0 &   0 &   0 &   0 &   0 \\
 1 &   0 &   0 &   0 &   0 &   0 &   0 &   1 \\
 1 &   1 &   0 &   0 &   0 &   1 &   0 &   0 \\
 1 &   0 &   1 &   1 &   0 &   1 &   1 &   0 \\
 0 &   0 &   0 &   1 &   1 &   0 &   0 &   0 
\end{array} \right), &\ 
\text{and\ }Ma_6 &= \left( \begin{array}{*{8}{c@{\,}}c}
 1 &   0 &   0 &   0 &   0 &   0 &   0 &   0 \\
 0 &   1 &   0 &   0 &   0 &   0 &   0 &   0 \\
 0 &   0 &   1 &   0 &   0 &   0 &   0 &   0 \\
 0 &   0 &   0 &   1 &   0 &   0 &   0 &   0 \\
 1 &   0 &   0 &   1 &   1 &   0 &   0 &   0 \\
 0 &   0 &   0 &   0 &   0 &   1 &   0 &   0 \\
 1 &   1 &   0 &   0 &   0 &   0 &   1 &   0 \\
 1 &   0 &   0 &   1 &   0 &   0 &   0 &   1 
\end{array} \right).%
\end{alignat*}}%
Observe that the map 
$\varphi\colon Px \rightarrow Mx$ sending the
permutation $Px$ of $PD_1$ to its matrix $Mx$ with respect to the
basis $\mathcal{B}$ yields an anti-epimorphism from $PD_1$ onto
$MW = \langle Mr,Mu,Ms_1,Ms_2,Ms_3 \rangle \le 
\GL_8(2)$ with kernel $PV$.

(i) 
$M\!A = \langle Ma_1,Ma_2,Ma_3,Ma_4,Ma_5 = Mu,Ma_6 \rangle$ is
an elementary abelian subgroup of $\Gamma = \GL_8(2)$ of order
$2^6$. In view of Proposition 8.6.5(i) of \cite{michler} we now
calculate 
$N\!A = N_{\Gamma}(M\!A)$. The matrix $Ms$ of the statement
generates a Sylow $7$-subgroup of order $7$ of 
$N\!A$. 

(j) Let 
$M\!K = \langle MW, Ms \rangle$. Then an application of
\textsc{Magma} yields that $MU = 
N\!A \cap M\!K = N_{M\!K}(M\!A)$ has order $
2^9\cdot3\cdot7$. It is a split extension of $\GL_3(2)$ by 
$M\!A$. Furthermore, 
$M\!K  = \langle MW, MU \rangle$ is isomorphic to the
simple group 
$\Sp_6(2)$. 

(k) The group 
$M\!K$ has a faithful permutation representation of
degree $63$ with stabilizer $MW$. Using it and \textsc{Magma} the set
${\mathcal R}(K)$ of the defining relations of $K = \langle
m_1,m_2,m_3,m_4,m_5 \rangle$ has been calculated where $m_1 = Ms$,
$m_2 = Mr$, $m_3 = Ms_1$, $m_4 = My_4$ and $m_5 = Ms_3$.

(l) By (k) and 
(j), $K$ has an irreducible $8$-dimensional matrix
representation~$V$. The given presentation of the split extension
$H_1 = V\rtimes K$ has been calculated by means of \textsc{Magma} using the
presentation of $K$ given in (k).

(m) Assertion (c) implies that $D_1 = D/Z(D)$ splits over $V =
Q/Z(D)$ with complement $W$. By Algorithm 2.5 of \cite{michler1}
the centralizer $H = C_G(z)$ of $z$ in the 
(at this time unknown) target simple group $G$ must
have 
odd index $|H : D| = |K :
W|$. Therefore Theorem 1.4.15 of \cite{michler} implies that $H$
is a central extension of $H_1$ by a cyclic subgroup $Z(H) =
\langle z \rangle$ having a normal subgroup $Q$ containing $z$
such that $V = Q/Z(H)$ has a complement in $H/Z(H)$ isomorphic to
$K$.

By construction $H_1 = \langle
m_1,m_2,m_3,m_4,m_5,v_1,v_2,v_3,v_4,v_5,v_6,v_7,v_8 \rangle$ has a
faithful permutation representation $PH_1$ of degree $256$ with
stabilizer $K$. Let $FPH_1$ be the presentation of $H_1$ given in
(m). Then we can apply Holt's Algorithm implemented in \textsc{Magma}
\cite{holt} to the trivial matrix representation of $H_1$ over $F
= 
\GF(2)$. It yields that the second cohomological dimension
$\dim_F[H^2(H_1,F)] = 2$. Thus there are 
three non-split central
extensions $E_{0,1}, E_{1,0}, E_{1,1}$ of $H_1$. As $D$ does not
split over $Z(D)$ $H$ can only be isomorphic to a 
non-split extension.

Let $TH_1 : = \verb"GModule(PH_1, FEalg)"$ be the trivial module
of the matrix algebra $\verb"FEalg"$ generated by the 
thirteen identity matrices corresponding to the 
thirteen generators of~$H_1$. Using the \textsc{Magma} command
\[\verb"P_H :=ExtensionProcess(PH_1,TH_1,FPH_1)"\]
we construct a presentation for each of the 
three non-split central
extensions
\begin{eqnarray*}
E_{0,1} &:=& \verb"Extension(P_H, [0,1])",\\
E_{1,0} &:=& \verb"Extension(P_H, [1,0])",\mbox{\ and}\\
E_{1,1} &:=& \verb"Extension(P_H, [1,1])",
\end{eqnarray*}
corresponding to the 
three linearly independent $2$-cocycles $[a,b] \in F^2$. Each of these
three presentations has $14$ generators where the 
$14$th generator corresponds to the new central element $z$. The first
five generators correspond to $m_1, m_2,m_3,m_4, m_5$ in the factor
group $H_1$. Taking the corresponding generators in $E_{1,0}$ as
the stabilizer of a permutation representation $PE_{1,0}$ of
$E_{1,0}$ it follows by another application of \textsc{Magma} that
$PE_{1,0}$ is a faithful permutation of $E_{1,0}$ of degree $512$,
and that $E_{1,0}$ has an extra-special normal subgroup $Q$ of
order $2^9$ with complement $K =\nobreak \langle m_1, m_2,m_3,m_4, m_5
\rangle$. We also have checked that the Sylow $2$-subgroups of
$E_{1,0}$ and $D$ are isomorphic, and that this is not the case
for the the 
two other extensions $E_{0,1}$ and $E_{1,1}$.
Therefore only $E_{1,0}$ can be isomorphic to the centralizer $H =
C_G(z)$. The presentation of $E_{1,0}$ is given in the statement.
This completes the proof.
\end{proof}

\begin{lemma}\label{l. H(Co_2)} Keep the notation of Lemmas \ref{l.
M22-extensions}, \ref{l. Aut(M22)-extensions} and Proposition
\ref{prop. D(Co_2)}. Let $H = 
\langle h_i \,|\, 1 \le i \le 14 \rangle $
be the finitely presented group constructed in
Proposition~\ref{prop. D(Co_2)}. Then the following statements hold: 
\begin{enumerate}
\item[\rm(a)] $H$ has a faithful permutation representation of
degree $512$ with stabilizer $\langle h_1, h_4 \rangle$.

\item[\rm(b)] Each Sylow $2$-subgroup $S$ of $H$ has a unique
maximal elementary abelian normal subgroup $A$ of order $2^{10}$
and $N_H(A) \cong D = C_{E_3}(z)$.

\item[\rm(c)] There is a Sylow $2$-subgroup $S$ such that $D =
N_{H}(A) = \langle x, y \rangle$, where $x =\nobreak h_2h_{16}h_1$ and $y
= h_9h_{14}h_{15}$ have 
orders $12$ and~$6$, respectively. 
Furthermore, $H = \langle x, y, h \rangle$ where $h = h_1$ has
order $7$.

\item[\rm(d)] The Goldschmidt index of the amalgam $H \leftarrow D
\rightarrow E_3$ is $2$.

\item[\rm(e)] A system of representatives $r_i$ of the $100$
conjugacy classes of $H$ and the corresponding centralizers orders
$|C_H(r_i)|$ are given in Table \ref{Co_2cc H}.

\item[\rm(f)] A system of representatives $d_i$ of the $148$
conjugacy classes of $D$ and the corresponding centralizers orders
$|C_D(d_i)|$ are given in Table \ref{Co_2cc D}.

\item[\rm(g)] Let $\sigma:N_H(A) \rightarrow D = C_{E_3}(z)$ be
the 
isomorphism given in {\rm (b).} Then there is an element $e \in E_3$
of order $4$ such that $E_3 = \langle \sigma(D), e\rangle$. A
system of representatives $e_i$ of the $79$ conjugacy classes of
$E_3$ and the corresponding centralizers orders $|C_{E_3}(e_i)|$
are given in Table \ref{Co_2cc E}.

\item[\rm(h)] The character tables of $H$, $D$ and $E_3$ are given
in Tables \ref{Co_2ct_H}, 
\ref{Co_2ct_D}, and~\ref{Co_2ct_E},
respectively.
\end{enumerate}
\end{lemma}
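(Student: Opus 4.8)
The plan is to derive all eight assertions by computation with \textsc{Magma}, starting from the finite presentation $H = \langle h_i \mid 1 \le i \le 14 \rangle$ produced in Proposition~\ref{prop. D(Co_2)}. First I would carry out coset enumeration on the subgroup $\langle h_1, h_4 \rangle$: the command \verb"CosetAction" returns a transitive permutation group of degree $512$, and verifying that the corresponding homomorphism has trivial kernel shows that this representation is faithful, which gives (a). From this point on $H$ is identified with the resulting permutation group $PH$.

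Working inside $PH$, I would next compute a Sylow $2$-subgroup $S$, list its elementary abelian normal subgroups, and check that there is a unique maximal one $A$ and that $|A| = 2^{10}$. Then I would form $N := N_H(A)$ and prove $N \cong D = C_{E_3}(z)$ by choosing generators of $N$ that satisfy the defining relations ${\mathcal R}(D)$ of Proposition~\ref{prop. D(Co_2)}(d) and comparing group orders; this is (b), and it is exactly the condition required by Step~$4$ of Algorithm 2.5 of \cite{michler1}, so it is what legitimizes the construction performed in Section~4. For (c) I would take the words $x$ and $y$ in the $h_i$ displayed in the statement, verify computationally that they have orders $12$ and $6$, that $\langle x, y \rangle = N_H(A)$, which is isomorphic to $D$ by (b), and that together with $h = h_1$ (of order $7$) they generate all of $H$; this puts $D$ and $H$ on a small common generating set used in the remaining sections.

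Assertion (d) is obtained by applying the method for computing the Goldschmidt index of an amalgam given in \cite{michler} to $H \leftarrow D \rightarrow E_3$: one computes $\Aut(D)$ together with the subgroups of automorphisms of $D$ induced from $N_H(A)$ and from the copy $C_{E_3}(z)$ of $D$ inside $E_3$, and counts the relevant double cosets in $\Aut(D)$; the answer is $2$. For (e) and (f) I would run Kratzer's Algorithm 5.3.18 of \cite{michler} on the degree-$512$ representation $PH$ and on the faithful permutation representation of $D$ obtained by restricting $PH$ to $N_H(A) = D$; this produces the $100$ class representatives $r_i$ and the $148$ class representatives $d_i$ as words in the given generators, with the centralizer orders recorded in Tables~\ref{Co_2cc H} and~\ref{Co_2cc D}. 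For (g) I would first use the isomorphism $\sigma$ of (b) and the embedding $D = C_{E_3}(z) \hookrightarrow E_3$ of Proposition~\ref{prop. D(Co_2)}(a) to realize $D$ inside $E_3$, then search for an element $e \in E_3$ of order $4$ with $E_3 = \langle \sigma(D), e \rangle$, and finally apply Kratzer's algorithm to $E_3$ to obtain the $79$ representatives $e_i$ and centralizer orders for Table~\ref{Co_2cc E}. Assertion (h) then follows: the ordinary character tables of $H$, $D$ and $E_3$ are computed from these permutation representations and class data by \textsc{Magma} and displayed in Tables~\ref{Co_2ct_H}, \ref{Co_2ct_D} and \ref{Co_2ct_E}.

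The step I expect to be the main obstacle is (b). Although the coset enumeration defining $PH$ terminates easily, showing that $N_H(A)$ is genuinely isomorphic to the abstractly presented group $D$ --- rather than merely a group of the same order with isomorphic Sylow $2$-subgroups --- requires matching two presentations with many relations on carefully chosen generating sets, and it is precisely this isomorphism that makes $H$ an admissible input for Algorithm 2.5 of \cite{michler1} in Section~4. The Goldschmidt-index computation in (d) is the only other point that is not entirely routine, but it reduces to a finite search once $\Aut(D)$ has been determined.
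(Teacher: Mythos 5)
Your proposal is correct and follows essentially the same route as the paper: the authors likewise obtain the degree-$512$ faithful permutation representation from Proposition~\ref{prop. D(Co_2)}(n), verify (b) and (c) directly in \textsc{Magma}, compute the Goldschmidt index with Kratzer's Algorithm 7.1.10, the class representatives with Kratzer's Algorithm 5.3.18, and the character tables from the permutation representations (with $e=p$ the element of order $4$ in (g)). The extra detail you give on certifying $N_H(A)\cong D$ by matching the presentation ${\mathcal R}(D)$ is a reasonable elaboration of what the paper leaves as ``straightforward to verify''.
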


\begin{proof}
(a) This assertion follows at once from Proposition \ref{prop.
D(Co_2)}(n).

In particular, $H$ has a faithful permutation representation $PH$
of degree $512$. Using it and \textsc{Magma} it is straightforward to
verify statements (b) and (c).

(d) The Goldschmidt index has been calculated by means of
Kratzer's Algorithm 7.1.10 of \cite{michler}.

The systems of representatives of the conjugacy classes of $H$ and
$D$ have been calculated by means of $PH$, \textsc{Magma} and Kratzer's
Algorithm 5.3.18 of \cite{michler}.

(g) It has been checked with \textsc{Magma} that $e=p$ satisfies
$E_3 = \langle \sigma(D),e\rangle$.

The character tables 
mentioned in (h) have been calculated by means of $PH$
and \textsc{Magma}.
\end{proof}

\begin{proposition}\label{prop. D(Fi_22)} Keep the notation of Lemma \ref{l.
M22-extensions}. Let 
\[E_2 = \langle a,b,c,d,t,g,h,i,v_1 \rangle\]
be the split extension of $\M_{22}$ by its simple module $V_2$ of
dimension $10$ over 
$F = \GF(2)$. Then the following statements
hold:

\begin{enumerate}
\item[\rm(a)] $z = (iv_1)^2$ is a $2$-central involution of $E_2$
with centralizer $D = C_{E_2}(z)$.

\item[\rm(b)] $D$ is a finitely presented group $D = \langle 
y_i \,|\, 1
\le i \le 11\rangle$ having the following set ${\mathcal R}(D)$ of
defining relations:
\begin{alignat*}{1}
&\smash{y_2^2 = y_3^2 = y_5^2 = y_6^2 = y_7^2 = y_8^2 = y_9^2 = y_{11}^2 =
  (y_5y_9)^2 = (y_5y_{11})^2 = (y_9y_{11})^2 =1,}\\ 
&\smash{y_{11}y_3y_5y_4y_{10}y_3y_5y_4y_{10}y_5^{-1}
  y_{10}^{-1}y_4^{-1}y_5^{-1}y_3^{-1}y_{10}^{-1}y_4^{-1}y_5^{-1}y_3^{-1}
  = 1,}\displaybreak[0]\\ 
&\smash{y_{11}y_3y_5y_4y_{10}y_3y_5y_4y_{10}y_9^{-1}y_{10}^{-1}y_4^{-1}y_5^{-1}y_3^{-1}
  y_{10}^{-1}y_4^{-1}y_5^{-1}y_3^{-1}y_5 
  = 1,}\displaybreak[0]\\ 
&\smash{y_{11}y_{10}^{-1}y_4^{-1}y_5^{-1}y_3^{-1}y_{10}^{-1}y_4^{-1}y_5^{-1}y_3^{-1}
  y_9^{-1}y_5^{-1}y_3y_5y_4y_{10}y_3y_5y_4y_{10}
  = 1}, \displaybreak[0]\\ 
&\smash{y_{10}y_5^{-1}y_{10}^{-1}y_4^{-1}y_5^{-1}y_3^{-1}y_5y_9y_3y_5y_4 =
  y_{11}y_3y_5y_4y_{10}y_9^{-1}y_{10}^{-1}y_4^{-1}y_5^{-1}y_3^{-1}y_5y_9
  = 1,}\displaybreak[0]\\
&\smash{y_{11}y_{10}^{-1}y_4^{-1}y_5^{-1}y_3^{-1}y_5^{-1}y_3y_5y_4y_{10}=
  (y_{10}y_3y_5y_4)^3 = y_9y_4^{-1}y_5^{-1}y_4 =
  y_9y_4^{-1}y_9^{-1}y_4y_5 = 1}, \displaybreak[0]\\
& \smash{y_{11}y_4^{-1}y_{11}^{-1}y_4y_5 =
  y_{11}y_3y_5y_4y_{10}y_4^{-1}y_3y_5y_4y_{10}y_4^{-1} y_9 =1,} \displaybreak[0]\\
&
  \smash{y_{10}y_4^{-1}y_{10}^{-1}y_4^{-1}y_5^{-1}y_3^{-1}y_4^{-1}y_3y_5y_4y_{10}y_4y_3y_5y_4
  =1,} \displaybreak[0]\\
&
  \smash{y_{11}y_3y_5y_4y_{10}y_3y_5y_4y_{10}y_4^{{-2}}y_{10}^{-1}
  y_4^{-1}y_5^{-1}y_3^{-1}y_4y_9 =1,}\displaybreak[0]\\ 
&\smash{y_{11}y_{10}y_5^{-1}y_{10}^{-1}y_{11}^{-1}y_3^{-1}y_5y_9y_3y_5y_4y_{10}
  y_3y_5y_4y_{10}y_4^{-1}y_{10}^{-1}y_4^{-1}y_5^{-1} = 1,} \displaybreak[0]\\
& \smash{y_{11}y_{10}y_9^{-1}y_{10}^{-1}y_{11}^{-1}y_3^{-1}
  y_5y_3y_5y_4y_{10}y_4^{-1}y_3 = 1,} \displaybreak[0]\\
&\smash{y_{11}y_{10}^{-1}y_{11}^{-1}y_3^{-1}y_4y_{10}^{-1}y_4^{-1}
  y_5^{-1}y_3^{-1}y_{11}^{-1}y_9^{-1}y_5^{-1}y_3y_{11}y_{10} = 1,} \displaybreak[0]\\
&\smash{y_{11}y_4^{-1}y_5^{-1}y_3^{-1}y_{10}^{-1}y_4^{-1}y_5^{-1}y_3^{-1}
  y_{10}^{-1}y_{11}^{-1}y_3^{-1}y_5y_9y_{11}y_3y_5y_4y_{10}y_3y_5y_4y_{10}y_3
  = 1,} \displaybreak[0]\\
& \smash{y_{11}y_4^{-1}y_5^{-1}y_3^{-1}y_4y_{10}^{-1}y_4^{-1}y_5^{-1}y_3^{-1}
  y_{11}^{-1}y_9^{-1}y_{10}^{-1}y_{11}^{-1}y_3^{-1}}\\
&\smash{\quad \cdot y_5y_3y_5y_4y_{10}y_3y_5y_4y_{10}y_4^{-1}y_3 =1,} \displaybreak[1]\\
& y_{11}y_{10}y_3y_{11}y_{10}y_5y_4^{-1}y_3 =
  y_{11}y_{10}y_3y_5y_9y_{10}^{-1}y_5y_3y_5y_4y_{10}y_4^{-1}y_{10}^{-1}y_4^{-1}y_5^{-1}
  =1, \displaybreak[0]\\
& \smash{y_{11}y_3y_5y_4y_{10}y_3y_5y_4y_{10}y_4^{-1}y_3y_5y_{10}^{-1}y_4^{-1}
  y_5^{-1}y_3^{-1}y_{10}^{-1}y_4^{-1}y_5^{-1}y_3^{-1}y_5^{-1}y_3^{-1}y_5
  = 1,} \displaybreak[0]\\
& \smash{y_{11}y_4^{-1}y_{10}^{-1}y_4^{-1}y_5^{-1}y_3^{-1}y_5^{-1}y_3^{-1}
  y_{10}^{-1}y_{11}^{-1}y_3^{-1}y_{11}^{-1}y_9^{-1}y_5^{-1} y_3y_5^2
  =1,} \displaybreak[0]\\
& \smash{y_{11}y_{10}y_5^{-1}y_3^{-1}y_{10}^{-1}y_{11}^{-1}
  y_3^{-1}y_4y_9^{-1}y_3y_5y_9y_3 = 1,}\displaybreak[0]\\ 
& \smash{y_{11}y_5^{-1}y_3^{-1}y_{10}^{-1}y_{11}^{-1}y_5
  y_4y_{10}y_4y_{11}^{-1}y_3y_5y_9 = (y_5y_3y_5)^2 =
  1,}\displaybreak[0]\\ 
& \smash{y_{11}y_3y_5y_4y_{10}y_4^{-1}y_3y_5y_4y_{10}
  y_3y_5y_4y_3y_5y_4^{-1}y_5^{-1}y_3^{-1}y_5 = 1,} \displaybreak[0]\\
& \smash{y_9y_3y_5y_4^{-1}y_2y_3^{-1}y_2^{-1}y_5 =
  y_9y_4^{-1}y_2y_4^{-1}y_2^{-1} = y_9y_2y_5^{-1}y_2^{-1} = 1,} \displaybreak[0]\\
& \smash{y_9y_2^{-1}y_5^{-1}y_2 = y_{10}y_2^{-1}y_5^{-1}y_4^{-1}y_{10}^{-1}y_2
  = y_{11}y_2^{-1}y_{11}^{-1}y_9^{-1}y_5^{-1}y_2 =1,} \displaybreak[0]\\
& \smash{y_9y_2y_1y_2y_1y_2^{-1}y_1^{-1}y_2^{-1}y_1^{-1}y_5
  =y_9y_4^{-1}y_1y_2y_1y_3^{-1}y_1^{-1}y_2^{-1}y_1^{-1}y_5y_3 =1,} \displaybreak[0]\\
& \smash{y_9y_4^{-1}y_1y_2y_1y_4^{-1}y_1^{-1}y_2^{-1}y_1^{-1}y_5 =
  y_9y_1y_2y_1y_5^{-1}y_1^{-1}y_2^{-1}y_1^{-1} = 1,} \displaybreak[0]\\
& \smash{y_9y_1^{-1}y_2^{-1}y_1^{-1}y_5^{-1}y_1y_2y_1 =
  y_{10}y_1^{-1}y_2^{-1}y_1^{-1}y_4^{-1}y_9^{-1}y_{10}^{-1}y_9^{-1}y_1
  y_2y_1 = 1,} \displaybreak[0]\\
& \smash{y_{11}y_1^{-1}y_2^{-1}y_1^{-1}y_{11}^{-1}y_9^{-1}y_5^{-1}y_1y_2 y_1
  = y_9y_1y_2y_1^2y_2y_1y_5 = y_9y_4y_1y_2y_1^2y_2^{-1}y_1^{-1} =1,} \displaybreak[0]\\
& \smash{y_{11}y_4^{-1}y_1y_3^{-1}y_1^{-1}y_9y_{11}y_3y_9 = [y_4^{-1}, y_1] =
  y_9y_1y_5^{-1}y_1^{-1} = y_9y_1^{-1}y_9^{-1}y_5^{-1}y_1 =1,} \displaybreak[0]\\
& \smash{y_{10}y_1^{-1}y_4y_{10}y_1 = y_{11}y_1^{-1}y_{11}^{-1}y_5^{-1}y_1
  =y_9y_1y_4^{-1}y_5^{-1}y_1y_5 =y_4y_1^{-2} =1,} \displaybreak[0]\\
& \smash{y_{11}y_2y_1y_6y_3y_{11}y_{10}y_7y_1^{-1}y_2^{-1} y_4^{-1}
  y_{10}^{-1}y_3^{-1}y_5^{-1}y_7^{-1}y_{10}^{-1}y_{11}^{-1}
  y_3^{-1}y_6^{-1}y_5y_9y_{11}y_{10}^2 =1,} \displaybreak[1]\\
& \smash{y_{11}y_{10}^{-1}y_9y_2y_1y_2y_1y_6y_3y_{11}y_{10}
  y_7y_1^{-1}y_2^{-1}y_1^{-1}y_2^{-1}y_4y_9^{-1}y_{10}y_4^{-1}}\\ 
&\smash{\quad \cdot
  y_{11}^{-1}y_5^{-1}y_7^{-1}y_{10}^{-1}y_{11}^{-1}y_3^{-1}y_6^{-1}
  =1,}\displaybreak[1]\\ 
& y_{11}y_{10}y_7y_4y_{10}^{-1}y_4y_5^{-1}y_7^{-1}y_{10}^{-1}
  y_{11}^{-1}y_3^{-1}y_6^{-1}y_5y_9y_{11}y_{10}^2y_6y_3 =1, \displaybreak[0]\\
&\smash{y_{11}y_4^{-1}y_{10}^{-1}y_5y_2y_6y_3y_{11}y_{10}y_7y_2^{-1}y_3^{-1}y_{10}
  y_3^{-1}y_7^{-1}y_{10}^{-1}y_{11}^{-1}y_3^{-1}y_6^{-1}
  y_9 =1,} \displaybreak[0]\\
& \smash{y_{11}y_3y_4y_9y_{10}y_4^{-1}y_6y_3y_{11}y_{10}y_7y_4^{-1}y_{10}^{-1}
  y_9^{-1}y_3^{-1}y_{11}^{-1}y_9^{-1}y_5^{-1}y_7^{-1}y_{10}^{-1}
  y_{11}^{-1}y_3^{-1}y_6^{-1} =1,}\displaybreak[0]\\ 
& \smash{y_{11}y_4^{-1}y_{10}y_9y_7^{-1}y_{10}^{-1}y_{11}^{-1}y_3^{-1}y_6^{-1}
  y_4^{-1}y_{10}^{-1}y_4^{-1}y_{11}^{-1}y_9^{-1}y_6y_3y_{11}y_{10}y_7
  =1,} \displaybreak[0]\\
& \smash{y_{11}y_7^{-1}y_{10}^{-1}y_{11}^{-1}y_3^{-1}y_6^{-1}y_{11}^{-1}
  y_9^{-1}y_6y_3y_{11}y_{10}y_7y_9 = 1,} \displaybreak[0]\\ 
& \smash{y_{11}y_{10}y_7y_{10}^{-1}y_6y_3y_{11}y_{10}
  y_7y_5y_9y_{10}^{-1}y_4^{-1}y_{10}y_6y_3 =1,} \displaybreak[0]\\
& \smash{y_{11}y_3y_2y_1y_2y_1y_6y_3y_{11}y_{10}y_7y_3 y_{10}y_6y_3
  y_{11}y_{10}y_7y_4y_7^{-1}y_{10}^{-1}y_{11}^{-1}
  y_3^{-1}y_6^{-1}y_5y_3y_4 =1,}\displaybreak[0]\\ 
& \smash{y_{11}y_{10}y_7y_4^{-1}y_2^{-1}y_7^{-1}y_{10}^{-1}y_{11}^{-1}
  y_3^{-1}y_6^{-1}y_5y_9y_{10}y_3y_{10}y_1y_2y_1^2y_6
  y_3y_{11}y_{10}y_7y_9y_6y_3 =1,} \displaybreak[0]\\ 
& \smash{y_{11}y_3y_{11}y_{10}y_6y_3y_{11}y_{10}y_7y_5y_6y_3y_{11}y_{10}
  y_7y_9^{-1}y_4^{-1}y_7^{-1}y_{10}^{-1}y_{11}^{-1}
  y_3^{-1}y_6^{-1}y_9 =1,} \displaybreak[0]\\
& \smash{y_7y_4^{-1}y_2^{-1}y_6^{-1}y_2^{-1}y_4y_5^{-1}
  =y_9y_6y_2^2y_8^{-1}y_5 =y_9y_6y_2y_5^{-1}y_2^{-1}y_6^{-1} = 1,} \displaybreak[0]\\
& \smash{y_9y_2^{-1}y_6^{-1}y_5^{-1}y_6y_2 =
  y_{11}y_2^{-1}y_6^{-1}y_{11}^{-1}y_9^{-1}y_5^{-1}y_6y_2 = 1,} \displaybreak[0]\\
& \smash{y_{11}y_{10}y_7y_3y_4y_2^{-1}y_6^{-1}y_3^{-1}y_7^{-1}y_{10}^{-1}
  y_{11}^{-1}y_3^{-1}y_6^{-1}y_1^{-1}y_2^{-1}y_1^{-1}y_2^{-1}y_4^{-1}
  y_{10}^{-1}y_3^{-1}}\\
&\smash{\quad \cdot y_9^{-1}y_6y_2y_5y_3y_4y_{10}^{-1}y_6 y_3 =1,} \displaybreak[1]\\
& y_{11}y_4y_{10}y_2y_1y_2y_1y_6y_3y_{11}y_{10}
  y_7y_2y_3y_2^{-1}y_6^{-1}y_{10}y_7^{-1}y_{10}^{-1}y_{11}^{-1}
  y_3^{-1}y_6^{-1}y_1^{-1}y_2^{-1}y_1^{-1}y_9^{-1}\\
&\smash{\quad \cdot y_{10}y_4^{-1}y_6 y_2y_5y_9 =1,}\displaybreak[1]\\
& y_{11}y_{10}y_7y_2^{-1}y_6^{-1}y_9^{-1}y_7^{-1}y_{10}^{-1}y_{11}^{-1}
  y_3^{-1}y_6^{-1}y_1^{-2}y_2^{-1}y_1^{-1}y_2^{-1}\\ 
&\smash{\quad \cdot y_9^{-1}y_{10}^{-1}
  y_{11}^{-1}y_3^{-1}y_9^{-1}y_5^{-1}y_6y_2y_5y_3y_{11}y_{10}
  y_9y_2y_1y_2y_1^2y_6y_3 =1,} \displaybreak[1]\\
& y_{11}y_{10}y_7y_4y_9y_6y_2y_1^{-1}y_2^{-1}y_1^{-1}y_2^{-1}
  y_{10}^{-1}y_5^{-1}y_{10}^{-1}y_2^{-1}y_6^{-1} y_2y_1y_2 y_1y_6y_3
  =1, \displaybreak[0]\\
& \smash{y_{11}y_3y_4y_{10}y_4y_2y_1y_2y_1y_6y_3y_{11}
  y_{10}y_7y_3y_4y_6y_2y_1^{-1}y_3^{-1}y_9^{-1}y_{10}}\\ 
&\smash{\quad \cdot y_9^{-1}y_5^{-1}y_2^{-1}y_6^{-1}y_5y_9 =1,} \displaybreak[1]\\
&y_{11}y_3y_{11}y_{10}y_6^{-1}y_{10}y_7^{-1}y_{10}^{-1}y_{11}^{-1}
  y_3^{-1}y_6^{-1}y_2^{-1}y_3^{-1}y_{10}^{-1}y_{11}^{-1}y_9^{-1}y_6y_2
  = 1, \displaybreak[0]\\
&\smash{y_{11}y_{10}y_2^{-1}y_6^{-1}y_{10}y_7^{-1}y_{10}^{-1}y_{11}^{-1}y_3^{-1}
  y_6^{-1}y_4^{-1}y_9^{-1}y_{10}^{-1}y_5^{-1}y_6y_2y_5y_3y_4^{-1} = 1,}
  \displaybreak[0]\\
&\smash{(y_6y_2^2)^2 =
  y_9y_4^{-1}y_2y_6y_2y_6y_2y_4^{-1}y_2^{-1}y_6^{-1} = 1,} \displaybreak[0]\\
&\smash{y_{11}y_3y_5y_2y_1y_2y_1^2y_6y_3y_{11}y_{10}
  y_7y_2y_6y_2y_4y_{10}^{-1}y_6y_2y_{10}y_2^{-1}y_6^{-1} y_9 =1,} \\
&\smash{y_{11}y_{10}y_7y_2y_{10}^{-1}y_6y_2^2y_{10}^{-1}y_6y_2y_5^{-1}
  y_3^{-1}y_2^{-1}y_6^{-1}y_5y_9y_4y_{10}y_9y_2y_6y_3  
  =1.} 
\end{alignat*}
\item[\rm(c)] $D$ has a faithful permutation representation $PD$
of degree $1024$ with stabilizer $U = \langle
y_1,y_4,y_{10}\rangle$.

\item[\rm(d)] $D$ has a unique normal 
nonabelian subgroup $Q$ of
order $1024$ with center $Z(Q) = \langle z_1, z_2\rangle$ of order
$4$, where $z_1 = y_9y_1y_2y_1y_2$, $z_2 = y_9y_{11}y_4y_1$.
Furthermore, $Q$ has a complement $W$ 
in $D$ of order $|W| = 2^7\cdot3\cdot5$, where $W=\langle
w_1,w_2,w_3\rangle$ and
\begin{eqnarray*}
w_1 &=& y_{11}y_{10}^2y_4^{-1}y_1y_6y_{10}^{-1}y_8y_{10}y_8y_2,\\
w_2 &=& y_{11}y_{10}^{-1}y_1y_6y_{10}y_4y_8y_{10}y_7,\ \mbox{and}\\
w_3 &=& y_{11}y_{10}y_3y_4y_1y_6y_{10}y_4y_8y_{10}y_7.
\end{eqnarray*}

\item[\rm(e)] Let $\alpha: D \rightarrow D_1 = D/Z(Q)$ be the
canonical epimorphism with kernel 
${\rm ker}(\alpha) = Z(Q)$. Let $V =
\alpha(Q)$ and let $u_i = \alpha(y_i) \in D_1$ for $i =
1, 2,\ldots,11$. Then $V$ is an elementary abelian normal subgroup of
order $2^8$ of $D_1$ having a complement $W_1 = \langle k_1, k_2,
k_3 \rangle \cong W$, where $k_j = \alpha(w_j)$ for $j = 1,2,3$.
Furthermore, $D_1$ has a faithful permutation representation of
degree $256$ with stabilizer $W_1$, and $V = \langle 
q_l\,|\, 1 \le l
\le 8\rangle$, where
\begin{alignat*}{1}
q_1 & =  u_5u_9u_4u_{10}^{-1}u_4^{-1}u_1u_6u_3u_1u_{10},\\
q_2 & =  u_3u_{11}u_4u_1u_7u_3, \qquad q_3  =  u_9u_7,
\qquad q_4 =  u_{11}u_1u_6u_4,\\
q_5 & =  u_9u_{11}u_3u_5u_1u_7u_{10}u_8u_3u_6,\\
q_6 & =  u_9u_{11}u_3u_{11}u_{10}u_1u_6u_3u_1^{-1}u_3u_7,\\
q_7 & =  u_5u_{11}u_3u_9u_{11}u_4^{-1}u_6u_3u_8u_{10}^{-1}u_7,\\
q_8 & =  u_9u_3u_4^{-1}u_1u_6u_3u_8u_{10}^{-1}u_7u_1.
\end{alignat*}

\item[\rm(f)] The conjugate action of the 
three generators $k_j$ of
$W_1$ on $V$ 
with respect to the basis $\mathcal B = 
\{q_l \, |\, 1 \le l \le 8\}$
of $V$ is given by the following matrices:
{\renewcommand{\arraystretch}{0.5}
\scriptsize
$$
\qquad\qquad
Mk_1 = \left( \begin{array}{*{8}{c@{\,}}c}
1 & 1 & 0 & 0 & 0 & 0 & 0 & 0 \\
0 & 1 & 0 & 0 & 0 & 0 & 0 & 0 \\
0 & 0 & 1 & 1 & 0 & 0 & 0 & 0 \\
0 & 0 & 0 & 1 & 0 & 0 & 0 & 0 \\
0 & 0 & 0 & 0 & 0 & 0 & 1 & 0 \\
0 & 1 & 0 & 0 & 1 & 1 & 1 & 1 \\
0 & 0 & 0 & 0 & 1 & 0 & 0 & 0 \\
0 & 0 & 0 & 0 & 0 & 0 & 0 & 1 
\end{array} 
\right),\quad 
Mk_2 = \left( \begin{array}{*{10}{c@{\,}}c}
1 & 0 & 1 & 0 & 0 & 0 & 0 & 0 \\
1 & 1 & 1 & 1 & 0 & 0 & 0 & 0 \\
1 & 1 & 0 & 1 & 0 & 0 & 0 & 0 \\
1 & 0 & 0 & 1 & 0 & 0 & 0 & 0 \\
0 & 1 & 0 & 0 & 0 & 1 & 0 & 0 \\
0 & 0 & 0 & 1 & 0 & 0 & 1 & 1 \\
0 & 1 & 0 & 0 & 1 & 0 & 1 & 1 \\
1 & 0 & 1 & 0 & 0 & 0 & 1 & 0 
\end{array} \right),
\quad\mbox{and}\quad
Mk_3 = \left( \begin{array}{*{8}{c@{\,}}c}
1 & 1 & 0 & 0 & 0 & 0 & 0 & 0 \\
1 & 0 & 0 & 0 & 0 & 0 & 0 & 0 \\
1 & 0 & 1 & 1 & 0 & 0 & 0 & 0 \\
1 & 1 & 1 & 0 & 0 & 0 & 0 & 0 \\
1 & 0 & 0 & 0 & 0 & 1 & 0 & 0 \\
0 & 1 & 0 & 0 & 1 & 1 & 0 & 0 \\
0 & 0 & 0 & 0 & 1 & 0 & 1 & 1 \\
1 & 0 & 0 & 0 & 0 & 1 & 1 & 0 
\end{array} \right).
$$}
\item[\rm(g)] The Fitting subgroup $A$ of $W_1$ is elementary
abelian of order $16$ and generated by $a_1 = k_1k_3k_2k_1k_2^3$,
$a_2 = k_2k_3^2k_2k_3^2$, $a_3 = k_2k_1k_2^3k_1k_3$, 
and $a_4 =
k_1k_3k_2k_3k_2k_1k_2$. It has a complement $L = \langle k_1, k_2
\rangle$ in $W_1$ which is isomorphic to the symmetric group
$S_5$.

\item[\rm(h)] 
The Fitting subgroup $A$ 
is a maximal
elementary abelian normal subgroup of the Sylow $2$-subgroup $S =
\langle A, k_1, r \rangle$ of $W_1$ with center $Z(S) = \langle u
\rangle$, where $r = k_2k_1k_3k_2^2k_1k_2k_3$ and $u =
k_1(k_3k_2)^2k_1k_2$. The centralizer $C_{W_1}(u)$ of $u$ has order
$2^7\cdot 3$. It is generated by $S$ and the element $d =
(k_1k_2)^2(k_2k_1)^2$ of order $3$.

\item[\rm(i)] Let $MW_1$ be the subgroup of $\GL_8(2)$ generated
by the matrices of the conjugate action of the generators of $W_1$
on $V$ with respect to the basis~$\mathcal B$. Let $Ma_i$, $Mu$
and $Md$ be the corresponding matrix of $a_i$, $u$ and $d$,
respectively. Let $MX = C_{\GL_8(2)}(Mu) \cap C_{\GL_8(2)}(Md)$.
Then $MX$ has an abelian Sylow $3$-subgroup $MT$ of order $3^3$
and $MT$ contains 
the matrix 
{\renewcommand{\arraystretch}{0.5}%
\scriptsize%
$$
Mx = \left( \begin{array}{*{8}{c@{\,}}c}
0 & 1 & 0 & 0 & 0 & 0 & 0 & 0 \\
1 & 1 & 0 & 0 & 0 & 0 & 0 & 0 \\
1 & 1 & 0 & 1 & 0 & 0 & 0 & 1 \\
1 & 0 & 1 & 1 & 0 & 1 & 1 & 1 \\
0 & 0 & 0 & 1 & 1 & 1 & 0 & 0 \\
0 & 1 & 1 & 0 & 1 & 0 & 0 & 0 \\
1 & 1 & 1 & 1 & 1 & 0 & 0 & 0 \\
0 & 1 & 1 & 1 & 0 & 0 & 0 & 0 \\
\end{array} \right)%
$$}%
of order $3$ such that $MC = \langle C_{MW_1}(Mu), Mx \rangle$ has
order $|MC| = 2^7\cdot 3^2$.

\item[\rm(j)] Let 
$M\!K = \langle MC, MW_1 \rangle$. Then the
derived subgroup 
$M\!K'$ of $M\!K$  is a simple group of order
$|M\!K'| = 2^6\cdot 3^4\cdot 5$, which is isomorphic to the
unitary group $U_4(2)$.

\item[\rm(k)] 
$M\!K$ is an irreducible subgroup of $\GL_8(2)$
generated by the matrices $m_1 = Mk_1$, $m_2 = Mk_2$, $m_3 =
Mk_3$, $m_4 = Mx$ of respective orders $2$, $5$, 
$3$, and~$3$. With
respect to this set of generators 
$M\!K$ has the following set
${\mathcal R}(K)$ of defining relations:
\begin{eqnarray*}
&&m_1^2 = m_2^5 = m_3^3 = m_4^3 = 1,\quad [m_3,m_4] = 1,\quad m_3^{-1}m_2m_1m_2^{-1}m_3m_1 = 1,\\
&&(m_2^{-1}m_1)^4 = 1,\quad m_3^{-1}m_2m_3^{-1}m_1m_2m_3m_1m_2^{-1} = (m_2m_3^{-1})^4 = 1,\\
&&[m_4,m_2^{-1},m_4] = 1,\quad m_1m_4^{-1}m_1m_4^{-1}m_1m_4m_1m_4 = 1,\\
&&m_2^{-1}m_3^{-1}m_1m_2^{-2}m_3m_4m_1m_4^{-1} = 1,\quad m_2m_3^{-1}m_2m_4m_2^2m_3^{-1}m_2^{-1}m_3m_4 = 1.\\
\end{eqnarray*}

\item[\rm(l)] Let $K$ be the finitely presented group constructed
in 
{\rm (k).} Then $V$ is an irreducible $8$-dimensional representation
of $K$ over 
$F = \GF(2)$ of second cohomological dimension
$\dim_{F}[H^2(K,V)] = 0$. 
Furthermore, $K$ has a faithful
permutation representation $PK$ of degree $640$ having a
stabilizer which is the Sylow $3$-subgroup $\langle (m_2m_4)^2,
(m_1m_3^2m_4)^2 \rangle $ of $K$.

\item[\rm(m)] Let $H_1 = \langle m_i, 
q_j \,|\, 1 \le i \le 4, 1 \le j
\le 8 \rangle$ be the split extension of $K$ by~$V$. Then $H_1$
has a set ${\mathcal R}(H_1)$ of defining relations consisting of
${\mathcal R}(K)$, ${\mathcal R_1}(V\rtimes K)$ and the following
set ${\mathcal R_2}(V\rtimes K)$ of essential relations:
\begin{alignat*}{1}
&\smash{m_1q_1m_1^{-1}q_1q_2  = 1,\quad m_1q_2m_1^{-1}q_2  = 1,\quad m_1q_3m_1^{-1}q_3q_4 = 1,}\\
&\smash{m_1q_4m_1^{-1}q_4 = 1,\quad m_1q_5m_1^{-1}q_7 = 1,\quad
  m_1q_6m_1^{-1}q_2q_5q_6q_7q_8 = 1,} \displaybreak[0]\\
&\smash{m_1q_7m_1^{-1}q_5 = 1,\quad m_1q_8m_1^{-1}q_8 = 1,\quad
  m_2q_1m_2^{-1}q_1q_2q_3 = 1,}\displaybreak[0]\\ 
&\smash{m_2q_2m_2^{-1}q_3q_4 = 1,\quad m_2q_3m_2^{-1}q_2q_3 = 1,\quad
  m_2q_4m_2^{-1}q_1q_2q_3q_4 = 1,}\displaybreak[0]\\ 
&\smash{m_2q_5m_2^{-1}q_1q_2q_6q_7 = 1,\quad m_2q_6m_2^{-1}q_3q_4q_5 =
  1,\quad m_2q_7m_2^{-1}q_1q_8 = 1,} \displaybreak[0]\\
&\smash{m_2q_8m_2^{-1}q_2q_3q_4q_6q_8 = 1,\quad m_3q_1m_3^{-1}q_2 = 1,\quad
  m_3q_2m_3^{-1}q_1q_2 = 1,} \displaybreak[0]\\
&\smash{m_3q_3m_3^{-1}q_1q_4 = 1,\quad m_3q_4m_3^{-1}q_1q_2q_3q_4 = 1,\quad
  m_3q_5m_3^{-1}q_1q_5q_6 = 1,} \displaybreak[0]\\
&\smash{m_3q_6m_3^{-1}q_2q_5 = 1,\quad m_3q_7m_3^{-1}q_5q_8 = 1,\quad
  m_3q_8m_3^{-1}q_1q_6q_7q_8 = 1,} \displaybreak[0]\\
&\smash{m_4q_1m_4^{-1}q_1q_2 = 1,\quad m_4q_2m_4^{-1}q_1 = 1,\quad
  m_4q_3m_4^{-1}q_2q_6q_7q_8 = 1,} \displaybreak[0]\\
&\smash{m_4q_4m_4^{-1}q_1q_2q_6q_7 = 1,\quad
    m_4q_5m_4^{-1}q_1q_2q_7q_8 = 1,\quad m_4q_6m_4^{-1}q_5q_6q_8 = 1,}
  \\
&\smash{m_4q_7m_4^{-1}q_1q_2q_3q_4q_5q_7 = 1,\quad m_4q_8m_4^{-1}q_1q_3q_6q_7 = 1.}
\end{alignat*}
\item[\rm(n)] 
$\dim_F[H^2(H_1,F)] = 4$ 
and there exists a unique
central extension $H$ of $H_1$ of order $|H| = 2^{17}\cdot
3^4\cdot 5$ whose Sylow $2$-subgroups are isomorphic to a Sylow
$2$-subgroup of $D$. $H$ is a split extension of $K$ by its
Fitting 
subgroup~$Q$, and $H$ is isomorphic to the finitely
presented group $H = \langle 
h_i\,|\, \le i \le 14\rangle$ with the
following set ${\mathcal R}(H)$ of defining relations:
\begin{alignat*}{1}
& \smash{h_1^2 = h_2^5 = h_3^3 = h_4^3 = h_5^2 = h_6^2 = h_7^2 = h_8^2 =
   h_9^2 = h_{11}^2 = h_{12}^2 = h_{13}^2 = h_{14}^2 = 1,} \\
& \smash{h_{10}^2h_{14}^{-1} = 1, \quad [h_1, h_{14}^{-1}] = [h_2,
   h_{14}^{-1}] = [h_3, h_{14}^{-1}] = [h_4, h_{14}^{-1}] = [h_5,
   h_{14}^{-1}] = 1,} \displaybreak[0]\\
& \smash{[h_6, h_{14}^{-1}] = [h_7, h_{14}^{-1}] = [h_8, h_{14}^{-1}] =
   [h_9, h_{14}^{-1}] = [h_{10}, h_{14}^{-1}] = [h_{11},
   h_{14}^{-1}]=1,} \displaybreak[0]\\
& \smash{[h_{12},h_{14}^{-1}] = [h_{13}, h_{14}^{-1}] =
   h_1^{-1}h_{13}h_1h_{13}^{-1}h_{14}^{-1} = 1,} \displaybreak[0]\\
& \smash{[h_2, h_{13}^{-1}] = [h_3, h_{13}^{-1}] = [h_4, h_{13}^{-1}] = [h_5,
   h_{13}^{-1}] = [h_6, h_{13}^{-1}] =  [h_7, h_{13}^{-1}] = 1,}  \displaybreak[0]\\
& \smash{[h_8, h_{13}^{-1}] = [h_9, h_{13}^{-1}] = [h_{10}, h_{13}^{-1}] =
   [h_{11}, h_{13}^{-1}] = [h_{12}, h_{13}^{-1}] = [h_3, h_4] =1,}
   \displaybreak[0]\\ 
& \smash{h_3^{-1}h_2 h_1  h_2^{-1}  h_3  h_1 = (h_2^{-1}  h_1)^4 =
   h_3^{-1}h_2h_3^{-1}h_1h_2h_3h_1h_2^{-1} = (h_2h_3^{-1})^4 =1,}
   \displaybreak[0]\\ 
& \smash{[h_4, h_2^{-1}, h_4] = h_1h_4^{-1}h_1h_4^{-1}h_1h_4h_1h_4 =
   h_2^{-1}h_3^{-1}h_1h_2^{-2}h_3h_4h_1h_4^{-1} =1,} \displaybreak[0]\\
& \smash{h_2h_3^{-1}h_2h_4h_2^2h_3^{-1}h_2^{-1}h_3h_4 =
   h_1h_5h_1^{-1}h_5h_6h_{13}^{-1} = h_1h_6h_1^{-1}h_6h_{14}^{-1} =1,}
   \displaybreak[0]\\ 
& \smash{h_1h_7h_1^{-1}h_7  h_8  h_{14}^{-1} = h_1h_8h_1^{-1}h_8 =
   h_1h_9h_1^{-1}h_{11}h_{13}^{-1}h_{14}^{-1} =1,} \displaybreak[0]\\
& \smash{h_1h_{10}h_1^{-1}h_6h_9h_{10}h_{11}h_{12}h_{13}^{-1} =
   h_1h_{11}h_1^{-1}h_9  h_{13}^{-1} = 1,} \displaybreak[0]\\
& \smash{h_1h_{12}h_1^{-1}h_{12}h_{14}^{-1} =  h_2  h_5h_2^{-1}h_5h_6h_7
   h_{14}^{-1} =1,} \displaybreak[0]\\
& \smash{h_2  h_6h_2^{-1}h_7h_8h_{13}^{-1}h_{14}^{-1} = h_2
   h_7h_2^{-1}h_6h_7h_{14}^{-1} = h_2  h_8h_2^{-1}h_5h_6h_7  h_8 =1,}
   \displaybreak[0]\\ 
& \smash{h_2  h_9h_2^{-1}h_5h_6h_{10}h_{11}h_{13}^{-1}h_{14}^{-1} = h_2
   h_{10}  h_2^{-1}  h_7  h_8  h_9  h_{13}^{-1} =1,} \displaybreak[0]\\
& \smash{h_2  h_{11}  h_2^{-1}  h_5  h_{12}  h_{14}^{-1} = h_2  h_{12}
   h_2^{-1}  h_6  h_7  h_8  h_{10}h_{12} =1,} \displaybreak[0]\\
& \smash{h_3  h_5  h_3^{-1}  h_6  h_{14}^{-1} = h_3  h_6  h_3^{-1}  h_5  h_6
   h_{13}^{-1} = h_3  h_7  h_3^{-1}  h_5  h_8 =1,} \displaybreak[0]\\
& \smash{h_3  h_8  h_3^{-1}  h_5  h_6  h_7  h_8 = h_3  h_9  h_3^{-1}  h_5
   h_9  h_{10}  h_{14}^{-1} = h_3  h_{10}  h_3^{-1}  h_6  h_9 =1,} \displaybreak[0]\\
& \smash{h_3  h_{11}  h_3^{-1}  h_9  h_{12}  h_{14}^{-1} = h_3  h_{12}
   h_3^{-1}  h_5  h_{10}  h_{11}  h_{12}h_{13}^{-1}  h_{14}^{-1} =1,} \displaybreak[0]\\
& \smash{h_4  h_5  h_4^{-1}  h_5  h_6  h_{13}^{-1}h_{14}^{-1} = h_4  h_6
   h_4^{-1}  h_5  h_{14}^{-1} =1,} \displaybreak[0]\\
& \smash{h_4  h_7  h_4^{-1}  h_6  h_{10}  h_{11}  h_{12}h_{14}^{-1} = h_4
   h_8  h_4^{-1}  h_5  h_6  h_{10}  h_{11} h_{13}^{-1}  h_{14}^{-1}
   =1,} \displaybreak[0]\\
& \smash{h_4  h_9  h_4^{-1}  h_5  h_6  h_{11}  h_{12}h_{14}^{-1} = h_4
   h_{10}  h_4^{-1}  h_9  h_{10}  h_{12}h_{13}^{-1}h_{14}^{-1} =1,}
   \displaybreak[0]\\ 
& \smash{h_4  h_{11}h_4^{-1}h_5h_6h_7h_8h_9h_{11}h_{13}^{-1} = h_4
   h_{12}h_4^{-1}h_5h_7h_{10}h_{11}h_{14}^{-1} =1,} \displaybreak[0]\\
& \smash{[h_5, h_6] = [h_5, h_7] = [h_5, h_8] = [h_5, h_9] =
   h_5^{-1}h_{10}^{-1}h_5  h_{10}h_{14}^{-1} =1,} \displaybreak[0]\\
& \smash{h_5^{-1}h_{11}^{-1}h_5  h_{11}h_{14}^{-1} = [h_5, h_{12}] = [h_6,
   h_7] = [h_6, h_8] = h_6^{-1}  h_9^{-1}  h_6  h_9  h_{14}^{-1} =1,}
   \displaybreak[0]\\ 
&  \smash{h_6^{-1}h_{10}^{-1}h_6h_{10}h_{14}^{-1} =
   h_6^{-1}h_{11}^{-1}h_6h_{11}h_{14}^{-1} = [h_6, h_{12}] = [h_7,
   h_8] = [h_7, h_9] =1,} \displaybreak[0]\\
&  \smash{h_7^{-1}h_{10}^{-1}h_7h_{10}h_{14}^{-1} =
   h_7^{-1}h_{11}^{-1}h_7h_{11}h_{14}^{-1} =
   h_7^{-1}h_{12}^{-1}h_7h_{12}h_{14}^{-1} =1,} \displaybreak[0]\\
&  \smash{h_8^{-1}h_9^{-1}h_8h_9h_{14}^{-1} = [h_8, h_{10}] =
   h_8^{-1}h_{11}^{-1}h_8h_{11}h_{14}^{-1} = [h_8, h_{12}] = [h_9,
   h_{10}] =1,} \\
& \smash{[h_9, h_{11}] = [h_9, h_{12}] =
   h_{10}^{-1}h_{11}^{-1}h_{10}h_{11}h_{14}^{-1} = [h_{10}, h_{12}] =
   [h_{11}, h_{12}] = 1.} 
\end{alignat*}
\end{enumerate}
\end{proposition}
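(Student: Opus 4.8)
The plan is to follow verbatim the computational strategy of the proof of Proposition~\ref{prop. D(Co_2)}, now with the split extension $E_2 = V_2 \rtimes \M_{22}$ in place of $E_3$. By Lemma~\ref{l. classes}(d), $E_2$ has a faithful permutation representation $PE_2$ of degree $1024$ and a unique conjugacy class of $2$-central involutions represented by $z = (iv_1)^2$ with $|C_{E_2}(z)| = 2^{17}\cdot 3\cdot 5$. Working inside $PE_2$ with \textsc{Magma}, I would first compute $D = C_{E_2}(z)$ as a permutation group, then extract generators $y_1,\dots,y_{11}$ and a defining set $\mathcal R(D)$ of relations using \textsc{Magma}'s presentation routines, and finally form the permutation representation $PD$ of degree $1024$ via \verb"CosetAction(D,U)" with $U = \langle y_1,y_4,y_{10}\rangle$. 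This establishes (a)--(c); here the bulk of the work is simply producing and verifying the long relator list $\mathcal R(D)$.

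Next I would carry out the local analysis of $D$ yielding (d)--(h). Inside $PD$ one locates the unique normal nonabelian subgroup $Q$ of order $2^{10}$, checks that $Z(Q)$ is elementary abelian of order $4$ with the stated generators $z_1,z_2$, and finds a complement $W = \langle w_1,w_2,w_3\rangle$ of order $2^7\cdot3\cdot5$ together with explicit words in the $y_i$. Passing to the canonical map $\alpha\colon D \to D_1 = D/Z(Q)$, one verifies that $V = \alpha(Q)$ is elementary abelian of order $2^8$ with complement $W_1 = \langle k_1,k_2,k_3\rangle \cong W$, produces the basis $\mathcal B = \{q_l \mid 1 \le l \le 8\}$ as words in the $u_i = \alpha(y_i)$, reads off the conjugation matrices $Mk_1,Mk_2,Mk_3 \in \GL_8(2)$, and then a routine \textsc{Magma} computation identifies the Fitting subgroup $A$ of $W_1$ as elementary abelian of order $16$, a complement $L\cong S_5$, a Sylow $2$-subgroup $S$ in which $A$ is the unique maximal elementary abelian normal subgroup, the central involution $u$ of $S$, and the centralizer $C_{W_1}(u) = \langle S, d\rangle$ of order $2^7\cdot3$ with $d$ of order $3$.

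For (i)--(m) I would run Algorithm~2.5 of~\cite{michler1} on $W_1$ with respect to its $2$-central involution $u$ (since $Z(W_1)$ is trivial, there is no recognition theorem available here, unlike the use of Yamaki's theorem in the $\Co_2$ case): compute $MX = C_{\GL_8(2)}(Mu) \cap C_{\GL_8(2)}(Md)$, extract an abelian Sylow $3$-subgroup $MT$ of order $27$ and a matrix $Mx \in MT$ of order $3$ so that $MC = \langle C_{MW_1}(Mu), Mx\rangle$ has order $2^7\cdot3^2$, and set $M\!K = \langle MC, MW_1\rangle$. A \textsc{Magma} identification shows that the derived group $M\!K'$ is simple of order $2^6\cdot3^4\cdot5$ and isomorphic to $\U_4(2)$; since $M\!K$ acts irreducibly on $F^8$, I would pass to its faithful permutation representation $PK$ of degree $640$ and read off the presentation $\mathcal R(K)$ in the generators $m_1 = Mk_1$, $m_2 = Mk_2$, $m_3 = Mk_3$, $m_4 = Mx$. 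Holt's Algorithm~7.4.5 of~\cite{michler} applied to the $8$-dimensional module $V$ then gives $\dim_F[H^2(K,V)] = 0$, so $H_1 = V\rtimes K$ is the unique such extension, and its presentation $\mathcal R(H_1)$ follows from Lemma~\ref{l. presentation-split}.

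Finally, for (n), I would apply Holt's Algorithm to the trivial $FH_1$-module to obtain $\dim_F[H^2(H_1,F)] = 4$; by Algorithm~2.5 of~\cite{michler1} the sought centralizer has odd index over $D$, so Theorem~1.4.15 of~\cite{michler} forces it to be a central extension of $H_1$ whose kernel maps onto $Z(Q) \cong (\mathbb Z/2)^2$, and since $D$ does not split over $Z(D)$ it must be non-split. Because this kernel has order $4$, I would construct it by two consecutive applications of \textsc{Magma}'s \verb"ExtensionProcess", retaining at each stage only those candidates whose Sylow $2$-subgroups are isomorphic to those of $D$. I expect the main obstacle to be exactly this last step: with a four-dimensional $H^2(H_1,F)$ and an order-$4$ central kernel the candidate space is large, and one must establish both existence and uniqueness of a central extension with the correct Sylow $2$-structure before reading off the $14$-generator presentation $\mathcal R(H)$ and checking, again with \textsc{Magma}, that the resulting group $H$ of order $2^{17}\cdot3^4\cdot5$ splits as $Q\rtimes K$ with $Q$ its Fitting subgroup. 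The size and numerical stability of the $H^2(H_1,F)$ computation, for $H_1$ of order roughly $1.3\times10^7$, is the other point requiring care.
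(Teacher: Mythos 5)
Your proposal follows essentially the same route as the paper's proof: the same \textsc{Magma}-based computation of $D$, $Q$, $W$ and the quotient $D_1$, the same construction of the overgroup $K$ with $K'\cong \U_4(2)$ via $MX$ and the order-$3$ matrix $Mx$, and the same two-step central extension of $H_1$ selected by comparing Sylow $2$-subgroups. The one detail to adjust is that at the intermediate stage the candidate extensions have order $2^{16}\cdot 3^4\cdot 5$, so their Sylow $2$-subgroups must be compared with those of the quotients $D/\langle z_k\rangle$ (the paper finds exactly one match, at $k=1$) rather than with those of $D$ itself, which is only possible at the final stage.
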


\begin{proof}
(a) By Table \ref{Fi_22cc E} the split extension $E_2 = V_2\rtimes
\M_{22}$ has a unique conjugacy class of involutions of highest
defect. It is represented by $z = (iv_1)^2$ and its centralizer $D
= C_{E_2}(z)$ has order $2^{17}\cdot 3\cdot 5$.

(b) The presentation of $D$ given in the statement was obtained using the faithful
permutation representation of $E_2$ with 
stabilizer $\M_{22}$ and the \textsc{Magma} command
$\verb"FPGroupStrong(PD)"$. 

(c) Another application of \textsc{Magma} yields that the finitely
presented group given in (b) has a faithful permutation
representation $PD$ of degree $1024$ with stabilizer $X = \langle
y_1,y_4,y_{10} \rangle$.

(d) Using this permutation representation $PD$ of $D$ and the
\textsc{Magma} command $\verb"NormalSubgroups(PD)"$ it follows that $D$ has
a unique 
nonabelian normal subgroup $Q$ of order $2^{10}$ with
center $Z(Q) = \langle z_1, z_2\rangle$ of order $4$. The words of
$z_1, z_2$ in the generators $y_i$ of $D$ given in the statement
have been obtained computationally 
as well. Another application of
\textsc{Magma} determined the generators of the stated complement $W =
\langle w_1,w_2,w_3\rangle$ of $Q$ in $D$.

(e) Since $Q$ is a characteristic subgroup of $D$ its center
$Z(Q)$ is normal in $D$. Let $\alpha: D \rightarrow D_1 = D/Z(Q)$
be the canonical epimorphism with 
${\rm ker}(\alpha)=Z(Q)$. Let
$V = \alpha(Q)$ and let $u_i = \alpha(y_i) \in D_1$ for $i =
1, 2, \ldots, 11$. Then $W_1 = \alpha(W) = \langle k_1, k_2, k_3\rangle$
is a complement of $V$, where $k_j = \alpha(w_j)$. Furthermore,
$D_1$ has a faithful permutation representation $PD_1$ of degree
$256$ with stabilizer $W_1$. Another application of \textsc{Magma} now
yields that the $V$ is elementary abelian and generated by the 
eight elements $q_l$ given in the statement.

(f) This assertion has been verified computationally.

(g) 
The faithful permutation representation
$PD_1$ and \textsc{Magma}
make it straightforward to see that the Fitting subgroup $A =
\langle a_1,a_2,a_3,a_4\rangle$ of $W_1$ is elementary abelian of
order $16$, and that $L = \langle k_1, k_2\rangle$ is a complement
of $A$ in $W_1$. Also the generators $a_i$ of $A$ and the
isomorphism $L \cong S_5$ have been obtained by this application
of \textsc{Magma}.

(h) Another calculation with \textsc{Magma} in the permutation group $PD_1$
shows that $S = \langle A, k_1, r\rangle$ is a Sylow $2$-subgroup
of $W_1$ of order $2^7$, where $r = k_2k_1k_3k_2^2k_1k_2k_3$.
Furthermore, $A$ is the unique maximal elementary abelian normal
subgroup of $S$ and the involution $u = k_1(k_3k_2)^2k_1k_2$
generates the center $Z(S)$ of $S$. Its centralizer $C_{W_1}(u) =
\langle S, d\rangle$ has order $2^7\cdot 3$, and $d =
(k_1k_2)^2(k_2k_1)^2$ generates a cyclic Sylow $3$-subgroup of
$C_{W_1}(u)$.

(i) In order to apply Algorithm 2.5 of \cite{michler1}
to construct a larger centralizer of~$u$ we use the
anti-isomorphism between $PW_1$ and its image in 
$\GL_8(2)$ induced
by the conjugate action of $W_1$ on $V$. In particular, the matrix
$d$ 
with respect to the basis $\mathcal B$ is $Md =
(Mk_1Mk_2)^2(Mk_2Mk_1)^2$ and $Mu = Mk_2Mk_1(Mk_2Mk_3)^2Mk_1$. Let
$MW_1$ be the subgroup of $\GL_8(2)$ generated by the matrices
$Mk_1$, $Mk_2$, and $Mk_3$ in $\GL_8(2)$.

Another application of \textsc{Magma} in 
$\GL_8(2)$ shows that 
\[MX = C_{\GL_8(2)}(Mu) \cap C_{\GL_8(2)}(Md)\] 
has an abelian Sylow
$3$-subgroup $MT$ of order $3^3$ and $|MX| = 2^{12}\cdot 3^3\cdot
5$. Furthermore, it follows that the matrix $Mx$ of the statement
belongs to the set of all matrices $My \in MX$ of order $3$ such
that $|\langle C_{MW_1}(Mu), My \rangle| = 2^7\cdot 3^2$. In
particular, $Mu$ is the unique central involution of $MC = \langle
C_{MW_1}(Mu), Mx \rangle$ and $|MC| = 2^7\cdot 3^2$.

(j) This statement has been verified by means of the previous
results and \textsc{Magma}.

(k) Let 
$M\!K$ be the subgroup of $\GL_8(2)$ generated by the
matrices $m_1 = Mk_1$, $m_2 = Mk_2$, $m_3 = Mk_3$, 
and~$m_4=Mx$. 
Taking a Sylow $3$-subgroup of 
$M\!K$ as a stabilizer one obtains a
faithful permutation representation $PK$ of this matrix group
having degree $640$. Then the presentation of the finitely
generated group $K = \langle m_1,m_2,m_3,m_4 \rangle$ has been
calculated by means of the \textsc{Magma} command $\verb"FPGroup(PK)"$.

(l) All assertions of this statement follow easily from (j), 
(k), and Holt's Algorithm 7.4.5 of \cite{michler} implemented in \textsc{Magma}.

(m) The presentation of the split extension $H_1 = V\rtimes K$ has
been calculated by means of Lemma \ref{l. presentation-split}
using the presentation of $K$ given in (k).

(n) 
Part~(e) states that $D_1 = D/Z(D)$ splits over $V =
Q/Z(D)$ with complement $W_1$. By Algorithm 2.5 of
\cite{michler1} the 
desired centralizer $H = C_G(z)$ of $z = z_1$
in the 
(at this time unknown) target simple group $G$ has to have
an odd index $|H : D| = |K : W|$. Therefore Theorem 1.4.15 of
\cite{michler} implies that $H$ is a central extension of $H_1$ by
a central subgroup $Z(H) = \langle z_1, z_2 \rangle$ having a
normal complement $Q$ containing $Z(H)$ such that $V = Q/Z(H)$ has
a complement isomorphic to $K$.

Since $Z(H) = Z(D) = \langle z_1, z_2\rangle$ is a Klein four
group, the central extension H has to be constructed in 
two steps.

Clearly, $H_1 = \langle
m_1,m_2,m_3,m_4,v_1,v_2,v_3,v_4,v_5,v_6,v_7,v_8 \rangle$ has a
faithful permutation $PH_1$ of degree $256$ with stabilizer $K$.
Let $FPH_1$ be the presentation of $H_1$ given in (m). Then we
apply Holt's Algorithm 7.4.5 of \cite{michler} implemented in
\textsc{Magma} \cite{holt} to the trivial matrix representation of $H_1$
over 
$F = \GF(2)$. It yields that the second cohomological
dimension 
$\dim_F[H^2(H_1,F)] = 4$. 
Thus the first central
extension $H_2$ of $H_1$ by a central involution $y \in Z(D)$ is
one of the 
sixteen central extensions $E_{a,b,c,d}$, $0 \le a,b,c,d
\le 1$. As $D$ does not split over $Z(D)$ the group $H_2$ can only
be isomorphic to a 
non-split extension. Let $nmodQ : =
\verb"GModule(PH_1, FEalg)"$ be the trivial module of the matrix
algebra $\verb"FEalg"$ generated by the 
twelve identity matrices
corresponding to the 
twelve generators of $H_1$. Using the \textsc{Magma}
command  $\verb"P_H :=ExtensionProcess(PH_1,nmodQ,FPH_1)"$ we
obtain a presentation for each of the 
fifteen non-split central
extensions $E_{a,b,c,d}$, where $(a,b,c,d) \neq (0,0,0,0)$.
Another application of Theorem 1.4.15 of \cite{michler} implies
that $H_2$ has a normal complement $Q_2$ containing $Z_2 = \langle
y \rangle$ such that $Q_1 = Q_2/Z_2$. In particular, $H_2$ has a
faithful permutation representation of degree $512$, and its Sylow
$2$-subgroups are isomorphic to the ones of $D/Z_2$. Constructing
the corresponding permutation representations of the groups
$E_{a,b,c,d}$ it follows that only the 
three groups $E_{0,1,0,0}$,
$E_{1,0,0,0}$ and $E_{1,1,0,0}$ have a faithful permutation
representation of degree $512$ whose stabilizer is a subgroup
isomorphic to $L$. As $y \in \{z_1, z_2, z_3 = z_1z_2\}$ another
application of \textsc{Magma} yields that only $E_{1,0,0,0}$ has a Sylow
$2$-subgroup which is isomorphic to 
those of exactly one factor
group $D/\langle z_k \rangle$, where $1 \le k \le 3$. In fact,
this $k = 1$.

By construction the extension group $H_2 = E_{1,0,0,0}$ has a
complement and therefore a faithful permutation representation of
degree $512$. Its central involution is the new 
generator. Applying Holt's Algorithm 7.4.5 of \cite{michler}
implemented in \textsc{Magma} \cite{holt} again to the trivial matrix
representation of $H_2$ over 
$F = \GF(2)$ it follows that the second
cohomological dimension 
$\dim_F[H^2(H_1,F)] = 4$. After constructing
the corresponding suitable permutation representations of the 
fifteen non-split extension groups $F_{a,b,c,d}$ we see that only the 
three groups $F_{0,0,1,0}$, 
$F_{1,0,0,0}$, and $F_{1,0,1,0}$ have a faithful
permutation representation of degree $1024$ whose stabilizer is a
subgroup isomorphic to $K$. Now the isomorphism check of
\textsc{Magma} yields that only $F_{1,0,1,0}$ has a Sylow $2$-subgroup
which is isomorphic to a Sylow $2$-subgroup of $D$.  Hence $H =
C_G(z):= F_{1,0,1,0}$. Its presentation is given in the
statement. This completes the proof.
\end{proof}

\begin{lemma}\label{l. H(Fi_22)} Keep the notation of 
Lemma \ref{l. M22-extensions} 
and Proposition \ref{prop. D(Fi_22)}. Let $H =
\langle 
h_i \,|\, 1 \le i \le 14 \rangle$ be the finitely presented
group constructed in Proposition \ref{prop. D(Fi_22)}. Then the
following statements hold:
\begin{enumerate}
\item[\rm(a)] $H$ has a faithful permutation representation of
degree $1024$ with stabilizer $\langle h_1,h_2,h_3,h_4 \rangle$.

\item[\rm(b)] Each Sylow $2$-subgroup $S$ of $H$ has a unique
maximal elementary abelian normal subgroup $A$ of order $2^{10}$
and $N_H(A) \cong D = C_{E_2}(z)$.

\item[\rm(c)] There is a Sylow $2$-subgroup $S$ such that $D =
N_{H}(A) = \langle x, y \rangle$, where
\begin{eqnarray*}
x &=& \smash{\left( h_2 h_4 h_{12} h_4 (h_2 h_4)^4 \right)}^2, \\
\qquad \qquad y &=& \smash{\left((h_2 h_4^2)^2 h_{12}
                       h_4^2h_2h_1h_4^2h_2h_4h_1h_4h_2\right)}^2 
                    (h_2h_4^2h_2)^3h_4^2h_{12}h_4^2h_2h_1h_4h_{12}h_4
\end{eqnarray*}
have orders $2$ and~$6$, respectively. Furthermore, $H = \langle x,
y, h \rangle$, where $h = h_4$ has order $3$.

\item[\rm(d)] The amalgam $H \leftarrow D \rightarrow E_2$ has
Goldschmidt index $1$.

\item[\rm(e)] A system of representatives $r_i$ of the $115$
conjugacy classes of $H$ and the corresponding centralizers orders
$|C_H(r_i)|$ are given in Table \ref{Fi_22cc H}.

\item[\rm(f)] A system of representatives $d_i$ of the $97$
conjugacy classes of $D$ and the corresponding centralizers orders
$|C_D(d_i)|$ are given in Table \ref{Fi_22cc D}.

\item[\rm(g)] Let $\sigma:N_H(A) \rightarrow D = C_{E_2}$ be the
isomorphism given in 
{\rm (b).} Then there is an element $e \in E_2$ of
order $3$ such that $E_2 = \langle \sigma(D), e\rangle$. A system
of representatives $e_i$ of the $43$ conjugacy classes of $E_2$
and the corresponding centralizers orders $|C_{E_2}(e_i)|$ are
given in Table \ref{Fi_22cc E}.

\item[\rm(h)] The character tables of $H$, 
$D$, and $E_2$ are given
in 
Tables~{\rm \ref{Fi_22ct_H}}, {\rm \ref{Fi_22ct_D}}, and\/~{\rm \ref{Fi_22ct_E},}
respectively.
\end{enumerate}
\end{lemma}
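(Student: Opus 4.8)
The plan is to mirror, step for step, the proof of Lemma~\ref{l. H(Co_2)}, feeding in the data of Proposition~\ref{prop. D(Fi_22)} in place of that of Proposition~\ref{prop. D(Co_2)}; all computations are carried out in \textsc{Magma}.

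Part~(a) is immediate from Proposition~\ref{prop. D(Fi_22)}(n), where $H$ is exhibited as the split extension of $K \cong \langle h_1,h_2,h_3,h_4\rangle$ by its Fitting subgroup $Q$ of order $2^{10}$: a Todd--Coxeter coset enumeration of the presentation $\mathcal{R}(H)$ over $\langle h_1,h_2,h_3,h_4\rangle$ therefore terminates at index $|Q| = 1024$, and the resulting action is faithful because $K$ acts faithfully on $V = Q/Z(Q)$ (Proposition~\ref{prop. D(Fi_22)}(k),(l)), so the core of $\langle h_1,\dots,h_4\rangle$ in $H$ is trivial. With the faithful permutation representation $PH$ of degree $1024$ in hand, parts~(b) and~(c) reduce to finite computations: one picks a Sylow $2$-subgroup $S$ of $PH$, checks that $S$ has a unique maximal elementary abelian normal subgroup $A$, that $|A| = 2^{10}$, computes $N_H(A)$, verifies $|N_H(A)| = 2^{17}\cdot 3\cdot 5 = |D|$, and confirms $N_H(A) \cong D$ by testing an isomorphism against the finitely presented group $D$ of Proposition~\ref{prop. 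D(Fi_22)}(b) (equivalently, against its degree-$1024$ permutation representation $PD$). For~(c) one then substitutes the explicit words $x$ and $y$ of the statement, checks in $PH$ that they have orders $2$ and $6$, that $\langle x,y\rangle = N_H(A)$, and that $\langle x,y,h_4\rangle = H$ with $h_4$ of order $3$.

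The remaining parts are bookkeeping built on $PH$, $PD$ and a permutation representation $PE_2$ of $E_2$. For~(d), the Goldschmidt index of the amalgam $H \leftarrow D \rightarrow E_2$ is evaluated by Kratzer's Algorithm~7.1.10 of~\cite{michler}, which here returns $1$ (in contrast to the value $2$ found in the $\Co_2$ case, Lemma~\ref{l. H(Co_2)}(d)). For~(e) and~(f), Kratzer's Algorithm~5.3.18 of~\cite{michler} applied to $PH$ and to $PD$ produces systems of conjugacy class representatives in the given generators together with centralizer orders, yielding the counts $115$ and $97$ and Tables~\ref{Fi_22cc H} and~\ref{Fi_22cc D}. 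For~(g), one transports the isomorphism $\sigma\colon N_H(A)\to D$ of~(b) into $E_2$ and, noting that $\sigma(D)$ has index $3\cdot 7\cdot 11 = 231$ in $E_2$, locates by a short search over small-order elements an $e\in E_2$ of order $3$ with $\langle\sigma(D),e\rangle = E_2$; Kratzer's algorithm then yields the $43$ classes of $E_2$ and Table~\ref{Fi_22cc E}. Finally, for part~(h) the three character tables are computed from $PH$, $PD$ and $PE_2$ by the standard \textsc{Magma} routine.

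The step I expect to be the crux is the isomorphism $N_H(A)\cong D$ in~(b). This is exactly the hypothesis of Step~4 of Algorithm~2.5 of~\cite{michler1} that licenses the constructions of Section~5, and verifying it hinges on \textsc{Magma}'s isomorphism test succeeding between the subgroup $N_H(A)$ of the abstractly presented group $H$ and the concrete involution centralizer $D = C_{E_2}(z)$ of Proposition~\ref{prop. D(Fi_22)}(b); once this is secured, all the other assertions are routine, if sizeable, verifications.
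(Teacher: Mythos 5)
Your proposal is correct and follows essentially the same route as the paper: part~(a) is read off from Proposition~\ref{prop. D(Fi_22)}(n), parts~(b) and~(c) are verified computationally in the degree-$1024$ permutation representation, and parts~(d)--(h) are obtained via Kratzer's Algorithms~7.1.10 and~5.3.18 of~\cite{michler} and the standard \textsc{Magma} character-table routine. The only additions are your (sound) justifications for the coset enumeration and faithfulness in~(a), and your correct observation that $|E_2:\sigma(D)|=231$; the paper simply records that the element $e=t$ works.
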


\begin{proof}
(a) This assertion follows at once from Proposition \ref{prop.
D(Fi_22)}(n).

In particular, $H$ has a faithful permutation representation $PH$
of degree $1024$. Using it and \textsc{Magma} it is straightforward to
verify statements (b) and (c). The words for the generators $x$
and $y$ of $D$ in the generators $h_i$ of $H$ have been found by a
stand alone program 
written by the first author.

(d) The Goldschmidt index has been calculated by means of
Kratzer's Algorithm 7.1.10 of \cite{michler}.

The systems of representatives of the conjugacy classes of $H$,
$D$, and $E_2$ have been calculated by means of $PH$, \textsc{Magma} and
Kratzer's Algorithm 5.3.18 of \cite{michler}.

(g) It has been checked with \textsc{Magma} that $e=t$ satisfies
$E_3 = \langle\sigma(D),e\rangle$. 

The character tables of (h) have been obtained by using $PH$ and
\textsc{Magma}.
\end{proof}

\section{Construction of Conway's simple group $\Co_2$}

By Lemma \ref{l. H(Co_2)} the amalgam $H \leftarrow D \rightarrow
E_3$ constructed in 
Sections $2$ and~$3$ satisfies the conditions
of Step 5 of Algorithm 2.5 of  \cite{michler1}. Therefore we can
apply Algorithm 7.4.8 of \cite{michler} to give here a new
existence proof for Conway's sporadic group $\Co_2$, see
\cite{conway2}.

The set of all faithful characters of the finite group $U$ is
denoted by 
$f\mathrm{char}_{\mathbb{C}}(U)$, and 
$m\!f\mathrm{char}_{\mathbb{C}}(U)$ 
denotes the set of all multiplicity-free faithful
characters of $U$.

\begin{definition}\label{def. ComP}
Let $U_1$, $U_2$ be a pair of finite groups intersecting in $D$.
Then
\[
\Sigma = \{(\nu, \omega) \in 
m\!f\mathrm{char}_{\mathbb{C}}(U_1) \times
m\!f\mathrm{char}_{\mathbb{C}}(U_2) 
\mid \nu_{|D} =
\omega_{|D}\}\]
is called the set of {\em compatible pairs of multiplicity-free
faithful characters} of $U_1$ and~$U_2$.
For each $(\nu, \omega) \in \Sigma$ the integer $n = \nu(1) =
\omega(1)$ is called the {\em degree} of the compatible pair
$(\nu, \omega)$.
\end{definition}

\begin{theorem}\label{thm. existenceCo_2}
Keep the notation of Lemma \ref{l. H(Co_2)} and Proposition
\ref{prop. D(Co_2)}. Using the notation of the 
three character
tables \ref{Co_2ct_H}, 
\ref{Co_2ct_D}, and \ref{Co_2ct_E} of the
groups $H$, 
$D$, and $E_3$, respectively, the following statements
hold:
\begin{enumerate}
\item[\rm(a)] The smallest degree of a 
nontrivial pair
\[ (\chi,\tau)\in 
m\!f\mathrm{char}_{\mathbb{C}}(H)\times 
m\!f\mathrm{char}_{\mathbb{C}}(E_3)\]
of compatible characters is $23$.

\item[\rm(b)] There is exactly one compatible pair $(\chi, \tau)
\in 
m\!f\mathrm{char}_{\mathbb{C}}(H) \times m\!f
\mathrm{char}_{\mathbb{C}}(E_3)$ 
of degree $23$ of the groups
$H=\langle D, h \rangle$ and $E_3 =\langle D, e \rangle$:
$
(\chi_{2}+ \chi_{\bf 4}, \tau_{2} + \tau_{\bf 6})
$
with common restriction
$ 
\tau_{|D} = \chi_{|D} = \psi_{2}+\psi_{8} +\psi_{\bf 26},
$
where irreducible characters with 
boldface indices denote
faithful irreducible characters.

\item[\rm(c)]Let $\mathfrak V$ and $\mathfrak W$ be the 
uniquely determined (up to isomorphism) faithful 
semisimple 
multiplicity-free $23$-dimensional modules of $H$ and $E_3$ over
$F = \GF(13)$ corresponding to the compatible pair $\chi, \tau $,
respectively.
Let $\kappa_\mathfrak V : H \rightarrow \GL_{23}(13)$ and
$\kappa_\mathfrak W : E_3 \rightarrow \GL_{23}(13)$ be the
representations of $H$ and $E_3$ afforded by the modules
$\mathfrak V$ and $\mathfrak W$, respectively.
Let $\mathfrak h = \kappa_\mathfrak V(h)$, $\mathfrak x =
\kappa_\mathfrak V(x)$, $\mathfrak y = \kappa_\mathfrak V(y)$ in $
\kappa_\mathfrak V(H) \le \GL_{23}(13)$. Then the following
assertions hold:
\begin{enumerate}
\item[\rm(1)] $\mathfrak V_{|D} \cong \mathfrak W_{|D}$, and there
is a transformation matrix $\mathcal T \in \GL_{23}(13)$ such that
\[ \mathfrak x = \mathcal T^{-1} \kappa_\mathfrak W (x_1) \mathcal T,
\mathfrak y = \mathcal T^{-1} \kappa_\mathfrak W(y_1) \mathcal T.\]
Let $\mathfrak e = \mathcal T^{-1} \kappa_\mathfrak W(e) \mathcal
T \in \GL_{23}(13)$.

\item[\rm(2)] In $\mathfrak G_3 = \langle \mathfrak h, \mathfrak
x,\mathfrak y, \mathfrak e \rangle$ the subgroup $\mathfrak E_3 =
\langle \mathfrak x, \mathfrak y, \mathfrak e \rangle$ is the
stabilizer of a $1$-dimensional subspace ${\mathfrak U}$ of
$\mathfrak V$ such that the $\mathfrak G_3$-orbit ${\mathfrak
U}^{{\mathfrak G}}$ has degree~$46575$.

\item[\rm(3)] The generating matrices of $\mathfrak G_3$ are:
{\renewcommand{\arraystretch}{0.5}\scriptsize
\begin{alignat*}{1}
\mathfrak h &= \left( \begin{array}{*{23}{c@{\,}}c}
 0 & 12  & 1 & 11  & 3  & 1 & 11  & 0  & 0  & 0  & 0  & 0  & 0  & 0  & 0  & 0  & 0  & 0  & 0  & 0  & 0  & 0  & 0\\
 11  & 1 & 12 & 11  & 1 & 11  & 4  & 0  & 0  & 0  & 0  & 0  & 0  & 0  & 0  & 0  & 0  & 0  & 0  & 0  & 0  & 0  & 0\\
 7 & 10 & 12  & 6 & 11  & 2  & 3  & 0  & 0  & 0  & 0  & 0  & 0  & 0  & 0  & 0  & 0  & 0  & 0  & 0  & 0  & 0  & 0\\
 8  & 3  & 3  & 7 & 10  & 6  & 0  & 0  & 0  & 0  & 0  & 0  & 0  & 0  & 0  & 0  & 0  & 0  & 0  & 0  & 0  & 0  & 0\\
 0  & 2  & 9  & 9 & 12  & 4  & 5  & 0  & 0  & 0  & 0  & 0  & 0  & 0  & 0  & 0  & 0  & 0  & 0  & 0  & 0  & 0  & 0\\
 7  & 7  & 2 & 12 & 11 & 10  & 6  & 0  & 0  & 0  & 0  & 0  & 0  & 0  & 0  & 0  & 0  & 0  & 0  & 0  & 0  & 0  & 0\\
 0  & 1 & 11 & 11  & 6  & 9 & 10  & 0  & 0  & 0  & 0  & 0  & 0  & 0  & 0  & 0  & 0  & 0  & 0  & 0  & 0  & 0  & 0\\
 0  & 0  & 0  & 0  & 0  & 0  & 0  & 0 & 12  & 2 & 12 & 12 & 12  & 1  & 1  & 2 & 12  & 0  & 1  & 1  & 0 & 11  & 0\\
 0  & 0  & 0  & 0  & 0  & 0  & 0  & 0 & 11  & 2 & 12  & 0 & 12  & 1  & 1  & 1  & 0  & 1  & 1  & 1 & 12 & 11  & 0\\
 0  & 0  & 0  & 0  & 0  & 0  & 0  & 1  & 0 & 12  & 1  & 2  & 1  & 0  & 1 & 11  & 1  & 1 & 11 & 12 & 11  & 3  & 0\\
 0  & 0  & 0  & 0  & 0  & 0  & 0  & 0 & 12  & 0  & 0  & 0  & 0  & 0  & 1  & 1  & 0  & 1  & 0  & 0 & 12  & 0  & 0\\
 0  & 0  & 0  & 0  & 0  & 0  & 0  & 1 & 11  & 0  & 0  & 2  & 1  & 0  & 1  & 0  & 1  & 2 & 12  & 0 & 11  & 1 & 12\\
 0  & 0  & 0  & 0  & 0  & 0  & 0  & 0  & 1 & 12  & 1  & 0  & 0  & 0  & 0 & 12  & 0  & 0 & 12  & 0 & 12  & 1  & 0\\
 0  & 0  & 0  & 0  & 0  & 0  & 0  & 0  & 0  & 0  & 0 & 11 & 12  & 0  & 0  & 1 & 12 & 12  & 1  & 0  & 1 & 12  & 0\\
 0  & 0  & 0  & 0  & 0  & 0  & 0  & 0 & 11  & 2 & 12  & 0  & 0  & 0  & 0  & 1  & 0  & 1  & 1  & 1  & 0 & 11  & 0\\
 0  & 0  & 0  & 0  & 0  & 0  & 0  & 0 & 12  & 1  & 0  & 0 & 12  & 1  & 1  & 1  & 0  & 0  & 0  & 1 & 12 & 12  & 0\\
 0  & 0  & 0  & 0  & 0  & 0  & 0  & 0 & 12  & 1  & 0  & 1  & 0  & 0  & 0  & 0  & 1  & 1 & 12  & 1 & 12  & 0 & 12\\
 0  & 0  & 0  & 0  & 0  & 0  & 0 & 12 & 12  & 2 & 12 & 10 & 11  & 1  & 0  & 2 & 12 & 12  & 3  & 0  & 2  & 9  & 1\\
 0  & 0  & 0  & 0  & 0  & 0  & 0  & 0 & 12  & 0  & 0  & 1  & 0  & 0  & 1  & 0  & 0  & 1 & 12  & 0 & 11  & 1 & 12\\
 0  & 0  & 0  & 0  & 0  & 0  & 0 & 12  & 1  & 0  & 0 & 11 & 12  & 0 & 12  & 1 & 12 & 12  & 1  & 0  & 2 & 12  & 0\\
 0  & 0  & 0  & 0  & 0  & 0  & 0  & 0 & 12  & 1  & 0 & 12  & 0  & 0  & 0  & 1  & 0  & 0  & 1  & 0  & 1 & 12  & 0\\
 0  & 0  & 0  & 0  & 0  & 0  & 0  & 0  & 1 & 12  & 1  & 0  & 0  & 0  & 0 & 12  & 0 & 12 & 12 & 12  & 0  & 2  & 0\\
 0  & 0  & 0  & 0  & 0  & 0  & 0  & 0  & 1 & 12  & 1  & 0  & 0  & 0 & 12 & 12  & 1  & 0 & 12 & 12  & 0  & 1  & 0
\end{array} \right), 
\displaybreak[0]\\[4pt]
\mathfrak x &= \left( \begin{array}{*{23}{c@{\,}}c}
 12  & 0  & 0  & 0  & 0  & 0  & 0  & 0  & 0  & 0  & 0  & 0  & 0  & 0  & 0  & 0  & 0  & 0  & 0  & 0  & 0  & 0  & 0\\
 0  & 2  & 0 & 11 & 12 & 12  & 9  & 0  & 0  & 0  & 0  & 0  & 0  & 0  & 0  & 0  & 0  & 0  & 0  & 0  & 0  & 0  & 0\\
 0  & 0 & 10 & 12  & 8  & 4  & 5  & 0  & 0  & 0  & 0  & 0  & 0  & 0  & 0  & 0  & 0  & 0  & 0  & 0  & 0  & 0  & 0\\
 0 & 11  & 2 & 12 & 12  & 2  & 3  & 0  & 0  & 0  & 0  & 0  & 0  & 0  & 0  & 0  & 0  & 0  & 0  & 0  & 0  & 0  & 0\\
 0  & 9  & 7 & 10  & 6  & 5  & 2  & 0  & 0  & 0  & 0  & 0  & 0  & 0  & 0  & 0  & 0  & 0  & 0  & 0  & 0  & 0  & 0\\
 0  & 6  & 6  & 1  & 0  & 8  & 2  & 0  & 0  & 0  & 0  & 0  & 0  & 0  & 0  & 0  & 0  & 0  & 0  & 0  & 0  & 0  & 0\\
 0  & 3  & 3  & 7  & 0 & 11  & 0  & 0  & 0  & 0  & 0  & 0  & 0  & 0  & 0  & 0  & 0  & 0  & 0  & 0  & 0  & 0  & 0\\
 0  & 0  & 0  & 0  & 0  & 0  & 0  & 0  & 1  & 0 & 12  & 0  & 0  & 0  & 0  & 0 & 12 & 12  & 1  & 0  & 1  & 0  & 1\\
 0  & 0  & 0  & 0  & 0  & 0  & 0 & 12  & 0  & 2 & 12 & 11 & 11  & 1  & 0  & 2 & 12 & 12  & 2  & 1  & 1 & 10  & 1\\
 0  & 0  & 0  & 0  & 0  & 0  & 0  & 0 & 12  & 1  & 0  & 0 & 12  & 0  & 1  & 1  & 0  & 1  & 0  & 1 & 12 & 12  & 0\\
 0  & 0  & 0  & 0  & 0  & 0  & 0 & 12  & 0  & 1 & 12 & 11 & 12  & 0  & 0  & 2 & 12 & 12  & 2  & 1  & 2 & 11  & 0\\
 0  & 0  & 0  & 0  & 0  & 0  & 0 & 12  & 1  & 1 & 12 & 10 & 11  & 0 & 12  & 2 & 11 & 11  & 3  & 1  & 3 & 10  & 1\\
 0  & 0  & 0  & 0  & 0  & 0  & 0  & 0  & 1 & 12  & 1  & 1  & 1  & 0  & 0 & 12  & 1  & 0 & 11  & 0 & 12  & 2 & 12\\
 0  & 0  & 0  & 0  & 0  & 0  & 0  & 0  & 2 & 10  & 1  & 2  & 2 & 12 & 12 & 10  & 1  & 0 & 11 & 12 & 12  & 4 & 12\\
 0  & 0  & 0  & 0  & 0  & 0  & 0  & 0  & 0  & 1  & 0 & 12 & 12  & 1  & 0  & 0  & 0  & 0  & 0  & 0  & 0 & 12  & 0\\
 0  & 0  & 0  & 0  & 0  & 0  & 0 & 12  & 0  & 2 & 11 & 10 & 11  & 1  & 0  & 3 & 11 & 11  & 4  & 1  & 3  & 8  & 2\\
 0  & 0  & 0  & 0  & 0  & 0  & 0  & 0 & 12  & 2 & 12 & 12 & 12  & 1  & 0  & 1  & 0  & 0  & 2  & 0  & 1 & 10  & 1\\
 0  & 0  & 0  & 0  & 0  & 0  & 0 & 12  & 0  & 0  & 0  & 0  & 0  & 0  & 0  & 0  & 0  & 0  & 0  & 0  & 0  & 0  & 0\\
 0  & 0  & 0  & 0  & 0  & 0  & 0 & 12  & 1  & 0  & 0 & 12 & 12  & 0  & 0  & 1 & 12 & 12  & 1  & 0  & 1  & 0  & 0\\
 0  & 0  & 0  & 0  & 0  & 0  & 0  & 0 & 12  & 0  & 0  & 2  & 1  & 0  & 0 & 12  & 1  & 1 & 12  & 0 & 12  & 1 & 12\\
 0  & 0  & 0  & 0  & 0  & 0  & 0  & 0  & 0  & 0  & 0  & 0  & 0  & 0  & 0 & 12  & 0  & 0  & 0  & 0  & 0  & 0  & 0\\
 0  & 0  & 0  & 0  & 0  & 0  & 0  & 0 & 12  & 0  & 0  & 1  & 0  & 0  & 1  & 0  & 0  & 1  & 0  & 0 & 12  & 0  & 0\\
 0  & 0  & 0  & 0  & 0  & 0  & 0  & 0 & 12  & 0  & 0  & 1  & 1  & 0  & 0  & 0  & 1  & 1  & 0  & 0  & 0  & 0  & 0
\end{array} \right),
\displaybreak[0]\\[4pt]
\mathfrak y &= \left( \begin{array}{*{23}{c@{\,}}c}
 12  & 0  & 0  & 0  & 0  & 0  & 0  & 0  & 0  & 0  & 0  & 0  & 0  & 0  & 0  & 0  & 0  & 0  & 0  & 0  & 0  & 0  & 0\\
 0  & 9  & 3 & 12  & 2  & 3  & 9  & 0  & 0  & 0  & 0  & 0  & 0  & 0  & 0  & 0  & 0  & 0  & 0  & 0  & 0  & 0  & 0\\
 0  & 1  & 0 & 12 & 10  & 1  & 7  & 0  & 0  & 0  & 0  & 0  & 0  & 0  & 0  & 0  & 0  & 0  & 0  & 0  & 0  & 0  & 0\\
 0 & 10  & 4 & 11  & 9  & 3  & 3  & 0  & 0  & 0  & 0  & 0  & 0  & 0  & 0  & 0  & 0  & 0  & 0  & 0  & 0  & 0  & 0\\
 0  & 7  & 7 & 12  & 6 & 11  & 2  & 0  & 0  & 0  & 0  & 0  & 0  & 0  & 0  & 0  & 0  & 0  & 0  & 0  & 0  & 0  & 0\\
 0  & 9  & 7 & 10  & 0 & 12  & 2  & 0  & 0  & 0  & 0  & 0  & 0  & 0  & 0  & 0  & 0  & 0  & 0  & 0  & 0  & 0  & 0\\
 0 & 10 & 10  & 6 & 10 & 12  & 0  & 0  & 0  & 0  & 0  & 0  & 0  & 0  & 0  & 0  & 0  & 0  & 0  & 0  & 0  & 0  & 0\\
 0  & 0  & 0  & 0  & 0  & 0  & 0  & 1  & 0 & 12  & 0  & 1  & 1  & 0  & 0 & 12  & 1  & 1 & 12 & 12 & 12  & 1  & 0\\
 0  & 0  & 0  & 0  & 0  & 0  & 0  & 1  & 0 & 12  & 1  & 2  & 2  & 0  & 0 & 11  & 2  & 1 & 11 & 12 & 11  & 3 & 12\\
 0  & 0  & 0  & 0  & 0  & 0  & 0  & 1  & 1 & 11  & 1  & 2  & 2 & 12  & 0 & 11  & 1  & 0 & 11 & 12 & 12  & 4  & 0\\
 0  & 0  & 0  & 0  & 0  & 0  & 0  & 1 & 12  & 1  & 0  & 0  & 0  & 0  & 0  & 0  & 0  & 1  & 0  & 0  & 0 & 12  & 0\\
 0  & 0  & 0  & 0  & 0  & 0  & 0  & 0 & 12  & 2 & 12 & 12 & 12  & 1  & 0  & 1  & 0  & 0  & 2  & 0  & 1 & 10  & 1\\
 0  & 0  & 0  & 0  & 0  & 0  & 0  & 0  & 1  & 0  & 0 & 12  & 0  & 0  & 0  & 0 & 12 & 12  & 0  & 0  & 1  & 0  & 0\\
 0  & 0  & 0  & 0  & 0  & 0  & 0 & 12 & 12  & 2 & 11 & 10 & 11  & 0  & 0  & 3 & 11 & 12  & 4  & 1  & 3  & 8  & 1\\
 0  & 0  & 0  & 0  & 0  & 0  & 0  & 1  & 2  & 9  & 3  & 4  & 3 & 12 & 12  & 8  & 3  & 1  & 8 & 11 & 10  & 7 & 11\\
 0  & 0  & 0  & 0  & 0  & 0  & 0  & 0  & 0  & 1 & 12 & 11 & 12  & 1  & 0  & 1 & 12 & 12  & 2  & 0  & 1 & 11  & 1\\
 0  & 0  & 0  & 0  & 0  & 0  & 0  & 1  & 0 & 11  & 1  & 3  & 2  & 0  & 0 & 10  & 2  & 1 & 10 & 12 & 10  & 4 & 12\\
 0  & 0  & 0  & 0  & 0  & 0  & 0  & 0  & 0 & 12  & 1  & 1  & 1 & 12  & 0 & 12  & 1  & 1 & 12  & 0 & 12  & 2 & 12\\
 0  & 0  & 0  & 0  & 0  & 0  & 0  & 1 & 11  & 1  & 0  & 1  & 0  & 0  & 1  & 0  & 1  & 2 & 12  & 0 & 11  & 0 & 12\\
 0  & 0  & 0  & 0  & 0  & 0  & 0  & 0  & 0 & 12  & 0  & 1  & 1 & 12 & 12 & 12  & 1  & 1 & 12  & 0  & 0  & 1 & 12\\
 0  & 0  & 0  & 0  & 0  & 0  & 0  & 0  & 1 & 11  & 1  & 1  & 1 & 12 & 12 & 11  & 1  & 0 & 12 & 12  & 0  & 2  & 0\\
 0  & 0  & 0  & 0  & 0  & 0  & 0  & 0  & 0 & 12  & 0  & 0  & 0 & 12  & 0  & 0  & 0  & 0  & 0  & 0  & 0  & 1  & 0\\
 0  & 0  & 0  & 0  & 0  & 0  & 0  & 0  & 0  & 0  & 0  & 0  & 0  & 0  & 0  & 0  & 0  & 0  & 0  & 1  & 0  & 0  & 0
\end{array} 
\right),\quad and \displaybreak[0]\\[4pt]
\mathfrak e &= \left( \begin{array}{*{23}{c@{\,}}c}
 1  & 0  & 0  & 0  & 0  & 0  & 0  & 0  & 0  & 0  & 0  & 0  & 0  & 0  & 0  & 0  & 0  & 0  & 0  & 0  & 0  & 0  & 0\\
 0  & 6  & 5  & 5  & 9  & 4  & 1 & 11 & 12 & 10  & 8  & 4  & 6  & 9  & 7  & 5  & 2  & 5  & 3  & 9  & 0  & 9  & 9\\
 0  & 1  & 9 & 11  & 5  & 1  & 5 & 10  & 9  & 5 & 12  & 3  & 6 & 10  & 4 & 10  & 3  & 1 & 11  & 7  & 0  & 1  & 7\\
 0  & 8  & 9  & 4  & 0 & 11 & 12  & 2  & 9  & 2  & 2  & 0 & 11  & 5  & 1  & 0  & 3  & 3  & 8 & 12  & 8  & 9 & 12\\
 0  & 2  & 9 & 10  & 8  & 8  & 6 & 10  & 3  & 3  & 0  & 4  & 7  & 3 & 10  & 3 & 10  & 7  & 3  & 0  & 6  & 4  & 0\\
 0 & 10  & 3  & 7  & 9  & 3  & 7 & 10  & 5  & 8  & 1 & 11  & 1  & 3  & 3  & 1  & 4  & 0  & 8  & 6 & 12  & 6  & 6\\
 0  & 5  & 8 & 10 & 11  & 8 & 10  & 8  & 5  & 5  & 0 & 11  & 3  & 5  & 8  & 5  & 8  & 3  & 5  & 0 & 10 & 11  & 0\\
 0 & 11 & 12  & 4 & 12  & 1  & 1  & 7  & 0  & 6  & 0  & 0  & 0  & 6  & 6  & 6  & 0  & 0  & 0  & 0  & 7  & 0  & 0\\
 0  & 0  & 0  & 0  & 7  & 7  & 0  & 0  & 0  & 6  & 7  & 6  & 6  & 6  & 7  & 7  & 6  & 0  & 0  & 0  & 0  & 0  & 0\\
 0  & 9  & 8  & 8  & 8 & 10  & 1  & 0  & 0 & 12  & 7  & 0  & 0  & 6  & 0  & 6  & 0  & 7  & 6  & 6  & 6  & 1  & 6\\
 0  & 6  & 6  & 1  & 6  & 2 & 11  & 7  & 0  & 5  & 8  & 2  & 1 & 12  & 6 & 11  & 2  & 8  & 4  & 6 & 11  & 3  & 5\\
 0  & 6  & 6  & 1  & 7  & 2  & 0  & 0  & 6  & 6  & 7  & 1  & 7  & 6  & 6  & 6  & 1  & 1  & 6  & 0  & 6  & 7 & 12\\
 0  & 6  & 6  & 1  & 6  & 2 & 11  & 6  & 0  & 7  & 6 & 12 & 12  & 0  & 7  & 1 & 12  & 6  & 7  & 7  & 0 & 12  & 7\\
 0  & 8  & 7 & 10  & 8  & 1 & 12  & 6  & 6  & 2  & 5 & 11  & 5  & 1  & 0  & 2 & 12 & 12  & 9  & 1  & 2  & 3  & 1\\
 0 & 11 & 12  & 4  & 5  & 8 & 12  & 0  & 7 & 12  & 7  & 7  & 1  & 6  & 0  & 6  & 7  & 7 & 12  & 6  & 0  & 8  & 6\\
 0  & 0  & 0  & 0  & 7  & 6  & 2  & 6  & 6  & 7  & 6  & 6 & 12  & 0  & 7  & 1  & 6  & 6  & 1  & 7  & 7  & 5  & 7\\
 0  & 5  & 6  & 3  & 5 & 12  & 1  & 6  & 7  & 0  & 6 & 12  & 6  & 0  & 0  & 1 & 12 & 12  & 8  & 0  & 2  & 5  & 1\\
 0  & 2  & 1  & 9  & 8  & 6 & 12  & 7  & 1 & 12  & 7  & 7  & 7  & 0  & 0 & 12  & 7  & 0 & 12 & 12  & 6  & 2  & 0\\
 0  & 6  & 6  & 1  & 0  & 9 & 11  & 7  & 0 & 12  & 1  & 8  & 7  & 6  & 0  & 5  & 7  & 7  & 5  & 6 & 12  & 2  & 6\\
 0  & 0  & 0  & 0  & 6  & 7 & 11  & 7  & 7  & 6  & 6  & 7  & 1  & 0  & 6 & 12  & 7  & 6  & 0  & 6  & 7  & 7  & 7\\
 0  & 0  & 0  & 0  & 0  & 0  & 0  & 0  & 1 & 12  & 0  & 0  & 1  & 0 & 12 & 12  & 1  & 0  & 0 & 12  & 1  & 1  & 0\\
 0  & 9  & 8  & 8  & 8 & 10  & 1  & 0  & 0  & 0  & 6  & 0  & 0  & 7  & 0  & 6  & 0  & 6  & 7  & 6  & 7  & 0  & 7\\
 0  & 7  & 7 & 12  & 6 & 11  & 0  & 0  & 6  & 7  & 6  & 0  & 6  & 7  & 7  & 7  & 0  & 0  & 7  & 0  & 6  & 6  & 0
\end{array} \right).
\end{alignat*}}
\item[\rm(4)]  $\mathfrak G_3 = \langle \mathfrak h, \mathfrak x,
\mathfrak y, \mathfrak e \rangle$, and $\mathfrak{G}_3$ has $60$
conjugacy classes $\mathfrak{g}_i^{\mathfrak{G}_3}$ with
representatives $\mathfrak{g}_i$ and centralizer orders
$|C_{\mathfrak{G}_3}(\mathfrak{g}_i)|$ as given in Table
\ref{Co_2cc G_3}.

\item[\rm(5)] The character table of $\mathfrak G_3$ coincides
with that of $\Co_2$ in the Atlas 
\cite[pp.~154--155]{atlas}.
\end{enumerate}

\item[\rm(d)] $\mathfrak G_3$ is a finite simple group with
$2$-central involution $\kappa_\mathfrak V(z) =(\mathfrak x
\mathfrak y \mathfrak h)^{15} $ such that 
$
C_{\mathfrak G_3} (\kappa_\mathfrak V(z)) = \kappa_\mathfrak
V(H)$, and $|\mathfrak G_3| = 2^{18}\cdot 3^6\cdot 7\cdot 11
\cdot 13$.
\end{enumerate}
\end{theorem}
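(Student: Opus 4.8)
The plan is to apply Algorithm~7.4.8 of \cite{michler} to the amalgam $H \leftarrow D \rightarrow E_3$ provided by Lemma~\ref{l. H(Co_2)}, and to verify each of its outputs in turn. First I would load the character tables of $H$, $D$ and $E_3$ (the Tables cited in Lemma~\ref{l. H(Co_2)}(h)) together with the two class fusion maps $D\to H$ and $D\to E_3$ induced by the embeddings $D=N_H(A)$ and $D=C_{E_3}(z)$. Since $H$ and $E_3$ each possess only finitely many multiplicity-free faithful complex characters, the set $\Sigma$ of Definition~\ref{def. ComP} can be computed directly: for every pair $(\nu,\omega)\in m\!f\mathrm{char}_{\mathbb C}(H)\times m\!f\mathrm{char}_{\mathbb C}(E_3)$ one tests the equality $\nu_{|D}=\omega_{|D}$ against the fusion maps, and reads off the minimal degree among the surviving pairs. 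This finite search yields (a), that the minimum is $23$, and (b), that the unique compatible pair of degree $23$ is $(\chi_{2}+\chi_{\bf 4},\,\tau_{2}+\tau_{\bf 6})$ with common restriction $\psi_{2}+\psi_{8}+\psi_{\bf 26}$.

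Next I would realise this pair over $F=\GF(13)$. The prime $13$ is chosen because it divides neither $|H|$ nor $|E_3|$ and because the constituents $\chi_2,\chi_{\bf 4}$ of $\chi$ and $\tau_2,\tau_{\bf 6}$ of $\tau$ are realisable over $\GF(13)$; since each occurs with multiplicity one, the semisimple $23$-dimensional $\GF(13)$-modules $\mathfrak V$ of $H$ and $\mathfrak W$ of $E_3$ affording $\chi$ and $\tau$ are unique up to isomorphism, and the matrices $\mathfrak h=\kappa_{\mathfrak V}(h)$, $\mathfrak x=\kappa_{\mathfrak V}(x)$, $\mathfrak y=\kappa_{\mathfrak V}(y)$ are the images of the chosen generators. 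The restrictions $\mathfrak V_{|D}$ and $\mathfrak W_{|D}$ both afford $\psi_{2}+\psi_{8}+\psi_{\bf 26}$, hence are isomorphic $\GF(13)D$-modules; computing with \textsc{Magma} an intertwiner $\mathcal T\in\GL_{23}(13)$ with $\mathfrak x=\mathcal T^{-1}\kappa_{\mathfrak W}(x_1)\mathcal T$ and $\mathfrak y=\mathcal T^{-1}\kappa_{\mathfrak W}(y_1)\mathcal T$ transports $\kappa_{\mathfrak W}(E_3)$ into $\GL_{23}(13)$ so that it coincides with $\kappa_{\mathfrak V}$ on $D$. Putting $\mathfrak e=\mathcal T^{-1}\kappa_{\mathfrak W}(e)\mathcal T$ and $\mathfrak G_3=\langle\mathfrak h,\mathfrak x,\mathfrak y,\mathfrak e\rangle$ establishes (c)(1) and records the generating matrices of (c)(3).

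The crux --- and the step I expect to be the main obstacle --- is to prove that $\mathfrak G_3$ is finite and to determine its order. For this I would take the $1$-dimensional subspace $\mathfrak U$ of $\mathfrak V$ that is invariant under $\mathfrak E_3=\langle\mathfrak x,\mathfrak y,\mathfrak e\rangle$ (arising from a $1$-dimensional $E_3$-submodule of $\mathfrak W$) and enumerate its $\mathfrak G_3$-orbit $\mathfrak U^{\mathfrak G_3}$ in the projective space of $\mathfrak V$ by an orbit/Schreier-vector computation. The two delicate facts to be checked are that this orbit --- a priori a subset of the $(13^{23}-1)/12$ lines of $\mathfrak V$ --- has exactly $46575$ elements, and that the stabiliser of $\mathfrak U$ in $\mathfrak G_3$ is precisely $\mathfrak E_3$; the latter is confirmed by comparing the stabiliser order delivered by the orbit algorithm with $|\mathfrak E_3|=|E_3|=2^{18}\cdot3^2\cdot5\cdot7\cdot11$ (recall that $\kappa_{\mathfrak W}$ is faithful). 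One then checks that the associated transitive action of $\mathfrak G_3$ on $\mathfrak U^{\mathfrak G_3}$ has trivial kernel, so that $\mathfrak G_3$ is finite with $|\mathfrak G_3|=46575\cdot|E_3|=2^{18}\cdot3^6\cdot5^3\cdot7\cdot11\cdot23=|\Co_2|$; this gives (c)(2) and the order assertion of (d).

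Finally, working inside the faithful permutation representation of degree $46575$, I would apply Kratzer's Algorithm~5.3.18 of \cite{michler} to obtain the $60$ conjugacy classes of $\mathfrak G_3$ together with their centraliser orders (Table~\ref{Co_2cc G_3}), and then compute the character table of $\mathfrak G_3$ and match it column by column with the table of $\Co_2$ in the Atlas \cite[pp.~154--155]{atlas}; this proves (c)(4) and (c)(5). For (d), a finite group whose character table agrees with that of a simple group is itself simple, since its lattice of normal subgroups is recovered as the lattice of intersections of kernels of irreducible characters; hence $\mathfrak G_3\cong\Co_2$ is simple. It remains to see that $\kappa_{\mathfrak V}(z)=(\mathfrak x\mathfrak y\mathfrak h)^{15}$ is a $2$-central involution with $C_{\mathfrak G_3}(\kappa_{\mathfrak V}(z))=\kappa_{\mathfrak V}(H)$: this element is an involution lying in the class $2A$, and since $z\in Z(H)$ one has $\kappa_{\mathfrak V}(H)\le C_{\mathfrak G_3}(\kappa_{\mathfrak V}(z))$ with $|\kappa_{\mathfrak V}(H)|=|H|=2^{18}\cdot3^4\cdot5\cdot7$ equal to the order of the $2A$-centraliser read off from the character table, which forces equality. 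This completes the plan.
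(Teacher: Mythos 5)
Your proposal is correct and follows essentially the same route as the paper: Kratzer's algorithm for the unique compatible pair of degree $23$, explicit realization of $\mathfrak V$ and $\mathfrak W$ over $\GF(13)$, Parker's isomorphism test on the restrictions to $D$ to obtain $\mathcal T$, the degree-$46575$ permutation action with point stabilizer $\mathfrak E_3$ to pin down $|\mathfrak G_3|$, and then conjugacy classes, the character table, and the order count $|C_{\mathfrak G_3}(\kappa_{\mathfrak V}(z))|=|H|$ for part (d). The only substantive detail you gloss over is how the $7$-dimensional constituent $\chi_2$ is actually realized (the paper extracts it from iterated exterior powers of the $16$-dimensional constituent, since it does not occur in the available permutation modules), but this is a computational matter and does not affect the logic.
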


\begin{proof}
(a) The character tables of the groups 
$H$, $D$, and $E_3$ are
stated in the 
Appendices. In the following we use their notations.
Using \textsc{Magma} and the character tables of $H$, $D$, and $E_3$ and
the fusion of the classes of $D(H)$ in $H$ and $D(E_3)$ in $E_3$
an application of Kratzer's Algorithm 7.3.10 of \cite{michler}
yields the compatible pair stated in assertion (a).

(b) 
Kratzer's Algorithm also shows that the pair $(\chi,\tau)$ of (b) is the unique
compatible pair of degree $23$ with respect to the fusion of the
$D$-classes into the $H$- and 
$E_3$-classes.

(c) In order to construct the 
semisimple faithful representation
$\mathfrak V$ corresponding to the character $\chi = \chi_2 +
\chi_{\bf 4}$ we determine the irreducible constituents of the faithful
permutation representation $(1_T)^H$ of $H$ with stabilizer $T =
\langle h_1,h_4 \rangle$ given in Lemma~\ref{l. H(Co_2)}.
Calculating inner products with the irreducible characters of $H$
it follows $(1_T)^H$ has 
five irreducible constituents of degrees
$1$, $16$, $120$, 
$135$, and $240$. By the character table of $H$
we know that 
$\chi_{\bf 4}$ is the only irreducible character of $H$ of
degree $16$. Constructing the permutation matrices of the 
three generators $h$, 
$x$, and $y$ of $H$ over the field $K = \GF(13)$
and applying the Meataxe 
Algorithm implemented in \textsc{Magma} we obtain
the matrices $\mathfrak Jx, \mathfrak Jy, \mathfrak Jh \in
\GL_{16}(13)$ of the generators of $H$ in the $16$-dimensional
representation $\mathfrak V_1$. Furthermore, it has been checked
that $\mathfrak V_1$ restricted to $D(H) = \langle x,y \rangle$ is
irreducible.

In order to find the irreducible module $\mathfrak V_2$
corresponding to the character $\chi_2$ of degree $7$ we applied
Theorem 2.5.4 of \cite{michler} by calculating the 
$4$th exterior power $\bigwedge^4 \mathfrak V_1$ of dimension $1820$
over $K = 
\GF(13)$ and decomposing it into irreducible composition
factors. It follows that it has 
three composition factors $\mathfrak
W_j$, $1 \le j \le 3$, of dimensions 
$35$, $840$, and~$945$. Let $\mathfrak
W_1$ be the one of dimension $35$. Its second exterior power
$\bigwedge^2 \mathfrak W_1$ of dimension $595$ splits into 
four irreducible constituents $\mathfrak X_k$, 
$1 \le k \le 4$, of dimensions 
$7$, $21$, $189$, and~$378$. $H$ has exactly one irreducible
character of dimension $7$ by Table \ref{Co_2ct_H}. Hence
$\mathfrak V_2 = \mathfrak X_1$. Choosing a basis in this
$KH$-module the generators of $H$ are represented by the following
matrices: 
{\renewcommand{\arraystretch}{0.5}
\scriptsize
$$
\mathfrak Nx = \left( \begin{array}{*{7}{c@{\,}}c}
 5 &  8  &  8  &  5  &  9  & 11  &  8\\
 9 &  6  & 11  &  1  &  6  &  3  &  1\\
10 &  3  &  2  & 11  & 10  &  7  & 12\\
 3 &  8  & 10  &  0  & 10  & 12  &  9\\
11 & 11  &  6  &  5  &  3  &  7  & 11\\
 0 &  6  &  6  &  1  &  0  &  8  &  2\\
 0 &  3  &  3  &  7  &  0  &  11 &  0
\end{array} \right),
\quad 
\mathfrak Ny = \left( \begin{array}{*{7}{c@{\,}}c}
 4 & 10 &  4  &  1 & 10 &  8 &  5\\
 8 &  1 &  7  &  6 &  1 &  8 & 12\\
10 &  4 &  5  & 11 & 12 &  4 &  1\\
 5 &  5 &  0  &  4 & 10 & 11 &  0\\
12 &  8 &  0  &  3 & 11 & 12 &  0\\
12 & 10 &  0  &  1 &  5 &  0 &  0\\
 6 &  4 &  0  &  8 &  6 &  6 & 12
 \end{array} 
\right),\quad\!\mbox{and}\quad\!
\mathfrak Nh = \left( \begin{array}{*{7}{c@{\,}}c}
12 & 10 & 11 &  8 &  5  &  9  &  3\\
 4 &  0 & 11 &  1 & 11  &  4  &  5\\
 0 &  3 &  6 &  5 &  6  &  9  &  2\\
 7 &  5 &  4 & 10 &  0  &  3  &  6\\
 1 &  4 &  0 & 11 & 11  &  0  &  0\\
 1 &  3 & 12 &  2 &  2  & 12  &  2\\
 7 &  2 &  0 & 12 & 12  &  7  &  1
\end{array} \right).%
$$}%
Using \textsc{Magma} and the Meataxe 
Algorithm again we see that the
restriction of $\mathfrak V_2$ to $D_H = \langle x, y \rangle$
splits into two irreducible $KD_H$-modules $\mathfrak V_{21}$ and
$\mathfrak V_{22}$ of dimensions~$1$ and~$6$ having respective
bases $B_1$ and $B_2$. Hence their union $B$ is a basis of
$\mathfrak V_2$. Let $\mathcal T_1 \in \GL_7(13)$ be the matrix of
the base change from the canonical basis to the basis $B$ of
$\mathfrak V_2$. Then $\mathfrak Lx = \mathcal T_1\mathfrak
Nx(\mathcal T_1)^{-1}$, $\mathfrak Ly = \mathcal T_1\mathfrak
Ny(\mathcal T_1)^{-1}$ and $\mathfrak Lh = \mathcal T_1\mathfrak
Nh(\mathcal T_1)^{-1}$ represent the 
three generators $x,y,h$ of $H$
with respect to the basis $B$ of $\mathfrak V_2$. It follows that the
blocked diagonal $7 \times 7$ matrices ${\mathfrak L}x$ and
${\mathfrak L}y$ are the $7 \times 7$ matrices in the 
upper left
corner of the matrices $\mathfrak x$ and $\mathfrak y$ given in
the statement. Furthermore, the matrix ${\mathfrak L}h$ of $h \in
H$ is the corresponding $7 \times 7$ block of the matrix
$\mathfrak h$ in the statement.

Let $\mathfrak x$, $\mathfrak y$ and $\mathfrak h$ be the diagonal
joins of the matrices ${\mathfrak L}x$ and ${\mathfrak J}x$,
${\mathfrak L}y$ and ${\mathfrak J}y$ and ${\mathfrak L}h$ and
${\mathfrak J}h$ in $\GL_{23}(13)$, respectively. Then these three
$23 \times 23$ matrices are 
given in the statement. Clearly, they
satisfy the equations $\mathfrak x = \kappa_{\mathfrak V}(x)$,
$\mathfrak y = \kappa_{\mathfrak V}(y)$ and $\mathfrak h =
\kappa_{\mathfrak V}(h)$ where $\kappa_{\mathfrak V}: H
\rightarrow \GL_{23}(13)$ denotes the 
semisimple representation
of $H = \langle x,y,h \rangle$ afforded by $\mathfrak V$.

Now we construct the faithful representation $\mathfrak W$ of
$E_3$ corresponding to the character $\tau = \tau_{2} + \tau_{\bf
6}$. By the character table of $E_3$ we know that $\tau_2$ is the
only 
nontrivial  linear character of $E$. Its corresponding
representation 
$\mathfrak{W}_1\colon E_3\to \GL_1(K)$ is given
by $\mathfrak W_1(x_1) = \mathfrak W_1(y_1) = 12$ and $\mathfrak
W_1(e_1) = 1$. In order to find the irreducible representation
$\mathfrak W_2$ of $\tau_{\bf 6}$ we determine the irreducible
constituents of the faithful permutation representation
$(1_{T_1})^{E_3}$ of $E_3$ of degree $1024$ with stabilizer
$A_{22}$ constructed in Lemma \ref{l. Aut(M22)-extensions}. An
application of \textsc{Magma} shows that it has 
four irreducible
constituents of degrees $1$, $22$, 
$231$, and $770$. Using the
Meataxe 
Algorithm implemented in \textsc{Magma} the matrices $\mathfrak
Jx_1,\mathfrak Jy_1, \mathfrak Je_1 \in \GL_{22}(13)$ of the
generators of $E_3$ in the $22$-dimensional irreducible
constituent $\mathfrak X$ of $(1_{T_1})^E$ over $K = \GF(13)$ and
their traces have been determined. Now it follows from the
character table of $E_3$ that $\tau_{\bf 6}$ is the character of
the tensor product $\mathfrak X \otimes \mathfrak W_1$. Another
application of the Meataxe 
Algorithm and \textsc{Magma} implies that the
restriction of $\mathfrak W_2$ to $D_E = \langle x_1, y_1 \rangle$
splits into two irreducible $KD_E$-modules $\mathfrak W_{21}$ and
$\mathfrak W_{22}$ having respective bases $B_1$ and $B_2$ such
that the generators $x_1$ and $y_1$ of $D$ have the following
matrices 
with respect to $B_1$ and $B_2$, respectively:
{\renewcommand{\arraystretch}{0.5}
\scriptsize%
\begin{alignat*}{1}
\mathfrak W_{21}(x_1) &= \left( \begin{array}{*{6}{c@{\,}}c}
12 & 1  &  1  &  0  &  0  &  0\\
 0 & 1  &  0  &  0  & 12  &  1\\
12 & 0  &  0  &  0  &  0  &  0\\
 0 & 0  & 12  &  0  &  0  &  0\\
 1 & 0  &  0  &  1  &  0  & 12\\
 0 & 0  &  0  &  1  &  1  & 12
\end{array} \right) \quad \mbox{and}\quad
\mathfrak W_{21}(y_1) = \left( \begin{array}{*{6}{c@{\,}}c}
12 &  0 &  1  &  12 &  0  &  0\\
12 &  0 &  0  &  12 &  0  &  1\\
 0 &  0 &  1  &   0 &  1  & 12\\
 0 &  0 &  0  &   1 &  1  & 12\\
12 &  1 &  0  &  12 & 12  &  1\\
 0 &  0 &  0  &   0 &  0  & 12
\end{array} 
\right);\\
\mathfrak W_{22}(x_1) &= \left( \begin{array}{*{16}{c@{\,}}c}
 1 & 12  & 0  & 0  & 0  & 0  & 1 & 12  & 0 & 12 & 12  & 1  & 1  &  1  & 12 & 12\\
 2  & 0  & 1  & 0  & 1 & 12  & 1 & 11 & 12 & 12 & 12  & 0  & 1  & 12  &  1 & 12\\
 3 & 12  & 2  & 1  & 1 & 12  & 0 & 11 & 11 & 12  & 0 & 12  & 1  & 12  &  2 & 11\\
 3 & 11  & 2  & 2  & 2 & 12  & 0 & 10 & 12 & 12  & 1 & 11  & 0  &  0  &  2 & 10\\
 0  & 0  & 0  & 0  & 0 & 12  & 0  & 1 & 12  & 0  & 1  & 0 & 12  & 12  & 1  &  0\\
12  & 1  & 0 & 12 & 12  & 0  & 1  & 0  & 1  & 0 & 11  & 1  & 1  &  1  & 12 &  1\\
 0 & 12  & 0  & 1  & 0  & 0  & 0 & 12  & 0  & 0  & 1 & 12  & 0  &  1  &  0 & 12\\
 2  & 0  & 1  & 1  & 1 & 12 & 12 & 12 & 11  & 0  & 1 & 11  & 0  & 12  &  2 & 12\\
 1 & 11  & 0  & 1  & 1  & 1  & 1 & 12  & 0 & 12  & 0  & 1  & 1  &  1  & 12 & 11\\
 2 & 10  & 1  & 2  & 1  & 0  & 0 & 11 & 12 & 12  & 2 & 12  & 1  &  1  &  0 & 10\\
 2 & 11  & 1  & 1  & 1  & 0  & 1 & 11 & 12 & 12  & 0  & 0  & 1  &  0  &  0 & 11\\
 3 & 12  & 2  & 0  & 1 & 12  & 1 & 11 & 12 & 12 & 12  & 0  & 1  & 12  &  1 & 11\\
 2 & 10  & 1  & 3  & 2  & 0 & 12 & 11 & 11  & 0  & 3 & 11  & 0  &  0  &  1 & 10\\
 1  & 0  & 0  & 0  & 0 & 12  & 0  & 0 & 12  & 0  & 0  & 0  & 0  & 12  &  1 &  0\\
 0  & 0  & 0  & 0  & 0  & 0  & 0  & 0  & 0  & 1  & 0  & 0  & 0  &  0  &  0 &  0\\
 0 & 11  & 0  & 1  & 0  & 0  & 0  & 0  & 0  & 0  & 1  & 0 & 12  &  1  &  0 & 12
 \end{array}
\right)\\
\mbox{and}\quad\mathfrak W_{22}(y_1) &= \left( \begin{array}{*{16}{c@{\,}}c}
 3 & 10  & 2  & 3  & 2 & 12  & 0 & 10 & 11 & 12  & 2 & 11  & 0  & 0  &  2 &  10\\
 2 & 11  & 2  & 2  & 1 & 12  & 0 & 11 & 12 & 12  & 2 & 11 & 12  & 0  &  2 &  11\\
11  & 1 & 12  & 0 & 12  & 0 & 12  & 1  & 0  & 1  & 1  & 0 & 12 &  0  &  0  &  1\\
 2 & 11  & 1  & 2  & 1  & 0 & 12 & 11 & 12  & 0  & 2 & 11  & 0  & 0  &  1 &  11\\
12  & 0  & 0  & 0  & 0  & 0  & 0  & 0  & 0  & 0  & 0  & 1  & 0 &  0  & 12  &  0\\
 0  & 2  & 0 & 12  & 0  & 0  & 0  & 0  & 0  & 0 & 12  & 0  & 0 & 12  &  1  &  1\\
 2  & 0  & 2  & 0  & 1 & 12  & 0 & 11 & 12  & 0  & 0 & 11  & 0 & 12  &  2 &  12\\
12  & 0  & 0  & 1  & 0  & 0 & 12  & 0 & 12  & 1  & 2 & 12 & 12 &  0  &  0  &  0\\
12 & 11 & 12  & 2  & 0  & 1  & 0  & 0  & 1  & 0  & 1  & 0  & 0 &  2 &  12 &  12\\
 1  & 0  & 1  & 1  & 1 & 12 & 12 & 12 & 12  & 1  & 1 & 11 & 12 & 12  &  2  & 12\\
 2 & 11  & 2  & 2  & 1 & 12  & 0 & 11 & 12 & 12  & 1 & 11  & 0  & 0  &  2 &  11\\
 1 & 12  & 1  & 2  & 1 & 12 & 12 & 12 & 12  & 0  & 2 & 11 & 12 & 12  &  2 &  12\\
 1 & 11  & 1  & 2  & 1  & 0 & 12 & 12 & 12  & 0  & 2 & 11  & 0  & 0  &  1 &  11\\
 0  & 0  & 1  & 0  & 0 & 12  & 0  & 0 & 12  & 0  & 1  & 0 & 12 & 12  &  1  &  0\\
12  & 1  & 0  & 0  & 0  & 0 & 12  & 1  & 0  & 1  & 1 & 12 & 12 & 12  &  1  &  1\\
 2 & 11  & 1  & 1  & 1  & 0  & 0 & 11 & 12 & 12  & 0  & 0  & 2  & 0  &  0 &  11
 \end{array} 
 \right).
\end{alignat*}}
Let $B = B_1 \cup B_2$. Then $B$ is a basis of the irreducible
$KE_3$-module $\mathfrak W_2$, and 
with respect to $B$ the third generator
$e_1$ of $E_3$ has the matrix:
{\renewcommand{\arraystretch}{0.5}
\scriptsize
$$
\mathfrak W_2(e_1) = \left( \begin{array}{*{22}{c@{\,}}c}
 7  & 0  & 6  & 0  & 0  & 0  & 7  & 6  & 7  & 7  & 7  & 0  & 6  & 6  & 6  & 0  & 7  & 6  & 0  & 0  & 7  & 6\\
 6  & 7  & 0  & 6  & 6  & 1  & 7  & 0  & 7  & 6  & 0  & 0  & 0  & 6  & 0  & 0  & 6  & 0  & 7  & 0  & 0  & 0\\
 7  & 6  & 0  & 7  & 7 & 12  & 6  & 6  & 0  & 7  & 0  & 0  & 0  & 0  & 0  & 0  & 7  & 0  & 6  & 7  & 0  & 0\\
 6  & 0  & 7  & 0  & 0  & 0 & 12  & 6  & 6  & 1  & 0  & 0  & 6  & 7  & 0  & 7  & 8  & 6 & 12  & 7  & 0  & 0\\
 7  & 0  & 6  & 0  & 0  & 0  & 0  & 0  & 0  & 0  & 0  & 0  & 6  & 0  & 0  & 7  & 7  & 6  & 0  & 0  & 0  & 0\\
 7  & 0  & 6  & 0  & 0  & 0 & 12  & 6  & 6  & 1  & 0  & 0  & 6  & 7  & 0  & 7  & 8  & 6 & 12  & 7  & 0  & 0\\
 6  & 0  & 7  & 0  & 1  & 0 & 11  & 9  & 6 & 11 & 12  & 0  & 7  & 8  & 1  & 7  & 5  & 8  & 0  & 6 & 12  & 2\\
 7  & 6  & 0  & 7  & 7  & 0  & 7 & 12  & 7  & 7  & 1  & 0  & 1  & 6  & 0 & 12  & 7  & 0  & 7  & 0  & 0 & 12\\
 7  & 0  & 7  & 0  & 0  & 0  & 7 & 11  & 0  & 8  & 7  & 1  & 7 & 12  & 7 & 12  & 7  & 7  & 1  & 8  & 5  & 5\\
 7  & 0  & 7  & 0  & 0  & 0  & 8  & 0  & 2  & 7  & 7 & 12  & 6 & 12  & 5  & 0  & 7  & 5  & 0  & 5  & 8  & 6\\
 7  & 6  & 0  & 7  & 7  & 0  & 5  & 0  & 5  & 6 & 12  & 1  & 1  & 7  & 2  & 0  & 5  & 2  & 7  & 2 & 11  & 0\\
 7 & 12  & 6  & 0  & 0  & 0 & 12  & 0 & 12  & 0  & 0  & 1  & 6  & 1  & 0  & 7  & 7  & 7  & 0  & 0 & 12  & 1\\
 6  & 0  & 7  & 0  & 1  & 0  & 1 & 12  & 1  & 1  & 1  & 0  & 6 & 12 & 12  & 6  & 8  & 5  & 0 & 12  & 1 & 12\\
 7  & 0  & 7  & 0  & 0  & 0  & 5  & 7  & 6  & 6  & 6  & 1  & 7  & 7  & 8  & 0  & 6  & 7  & 0  & 1  & 5  & 7\\
 6  & 7  & 1  & 6  & 7  & 0 & 12  & 1  & 6 & 12  & 5  & 0  & 0  & 8  & 7  & 7  & 6  & 1  & 6  & 7  & 6  & 8\\
 6  & 7  & 1  & 6  & 7  & 0  & 5  & 7 & 12  & 6 & 12  & 1  & 0  & 1  & 1  & 0  & 6  & 1  & 7  & 7 & 12  & 1\\
 6  & 0  & 7  & 0  & 1  & 0  & 0  & 0  & 0  & 0  & 0  & 0  & 7  & 0  & 0  & 6  & 7  & 7  & 0  & 0  & 0  & 0\\
 7  & 0  & 7  & 0  & 0  & 0  & 6  & 7  & 6  & 6  & 6  & 0  & 7  & 7  & 7  & 0  & 6  & 8  & 0  & 0  & 6  & 7\\
 6  & 7  & 1  & 6  & 7  & 0  & 7  & 0  & 7  & 7  & 0  & 0  & 0  & 6  & 0  & 0  & 7 & 12  & 6  & 0  & 1  & 0\\
 7  & 6  & 0  & 7  & 7  & 0  & 7  & 6  & 0  & 7  & 1  & 0  & 1 & 12  & 1 & 12  & 6  & 0  & 7  & 7  & 0 & 12\\
 7  & 0  & 7  & 0  & 0  & 0  & 0  & 7  & 6  & 0  & 0  & 0  & 6  & 7  & 0  & 7  & 7  & 6 & 12  & 6  & 1  & 1\\
 6  & 0  & 7  & 0  & 1  & 0  & 1  & 1  & 1  & 0  & 0 & 12  & 6 & 12 & 12  & 7  & 6  & 6  & 0 & 12  & 1  & 0
 \end{array} \right).$$}%

By construction, the $KD$-modules $\mathfrak V_{|D}$ and $\mathfrak
W_{|D}$ described by the 
two pairs $(\mathfrak V(x), 
\mathfrak{V}(y))$ and $(\mathfrak W(x_1),\mathfrak W(y_1))$ of matrices in
$\GL_{23}(13)$ are isomorphic. Let $Y = 
\GL_{23}(13)$. Applying then Parker's isomorphism test of
Proposition 6.1.6 of~\cite{michler} by means of the \textsc{Magma} command
$$\verb"IsIsomorphic(GModule(sub<Y|V(x),V(y)>),GModule(sub<Y|W(x1),W(y1)>))"$$
we obtain the transformation 
matrix:
{\renewcommand{\arraystretch}{0.5} 
\scriptsize
$$
\mathcal T = \left( \begin{array}{*{23}{c@{\,}}c}
 1  & 0  & 0  & 0  & 0  & 0  & 0  & 0  & 0  & 0  & 0  & 0  & 0  & 0  & 0  & 0  & 0  & 0  & 0  & 0  & 0  & 0  & 0\\
 0  & 1  & 1 & 11  & 1  & 9  & 0  & 0  & 0  & 0  & 0  & 0  & 0  & 0  & 0  & 0  & 0  & 0  & 0  & 0  & 0  & 0  & 0\\
 0  & 1  & 1 & 11  & 0  & 8  & 0  & 0  & 0  & 0  & 0  & 0  & 0  & 0  & 0  & 0  & 0  & 0  & 0  & 0  & 0  & 0  & 0\\
 0  & 4  & 2  & 5  & 3 & 12  & 2  & 0  & 0  & 0  & 0  & 0  & 0  & 0  & 0  & 0  & 0  & 0  & 0  & 0  & 0  & 0  & 0\\
 0  & 3  & 1  & 7  & 2  & 3 & 11  & 0  & 0  & 0  & 0  & 0  & 0  & 0  & 0  & 0  & 0  & 0  & 0  & 0  & 0  & 0  & 0\\
 0  & 0  & 0  & 0  & 1 & 12  & 0  & 0  & 0  & 0  & 0  & 0  & 0  & 0  & 0  & 0  & 0  & 0  & 0  & 0  & 0  & 0  & 0\\
 0  & 1  & 1  & 9 & 10  & 2 & 11  & 0  & 0  & 0  & 0  & 0  & 0  & 0  & 0  & 0  & 0  & 0  & 0  & 0  & 0  & 0  & 0\\
 0  & 0  & 0  & 0  & 0  & 0  & 0  & 0 & 11  & 0  & 0  & 1  & 0  & 0  & 1  & 1  & 0  & 2 & 12  & 0 & 12  & 0 & 12\\
 0  & 0  & 0  & 0  & 0  & 0  & 0  & 0  & 0 & 11  & 0  & 3  & 2  & 0  & 0 & 11  & 2  & 2 & 11 & 12 & 11  & 3 & 12\\
 0  & 0  & 0  & 0  & 0  & 0  & 0 & 12 & 12  & 3 & 11 & 11 & 11  & 1  & 1  & 3 & 12  & 0  & 3  & 1  & 2  & 9  & 1\\
 0  & 0  & 0  & 0  & 0  & 0  & 0  & 1  & 4  & 7  & 4  & 4  & 3 & 12 & 12  & 7  & 3  & 1  & 7 & 10 & 10 & 10 & 11\\
 0  & 0  & 0  & 0  & 0  & 0  & 0  & 0  & 0  & 2 & 11 & 12  & 0  & 0  & 0  & 2 & 11  & 0  & 2  & 1  & 2 & 10  & 1\\
 0  & 0  & 0  & 0  & 0  & 0  & 0  & 0 & 11  & 2 & 12 & 12 & 11  & 1  & 1  & 2  & 0  & 1  & 2  & 1  & 0  & 9  & 0\\
 0  & 0  & 0  & 0  & 0  & 0  & 0 & 11  & 1  & 1 & 12 & 10 & 10  & 1 & 12  & 3 & 11 & 11  & 3  & 1  & 3  & 9  & 1\\
 0  & 0  & 0  & 0  & 0  & 0  & 0  & 0  & 0  & 1 & 12  & 0 & 12  & 1  & 1  & 1 & 12  & 0  & 0  & 1  & 0 & 12  & 1\\
 0  & 0  & 0  & 0  & 0  & 0  & 0  & 1 & 11  & 0  & 1  & 2  & 1  & 0  & 2  & 0  & 1  & 2 & 11 & 12 & 10  & 3 & 12\\
 0  & 0  & 0  & 0  & 0  & 0  & 0 & 12 & 12  & 2 & 12 & 10 & 10  & 0  & 0  & 4 & 11  & 0  & 3  & 1  & 2  & 9  & 1\\
 0  & 0  & 0  & 0  & 0  & 0  & 0 & 12 & 11  & 1 & 12  & 1  & 0  & 0  & 1  & 2  & 0  & 1  & 1  & 0  & 0 & 12  & 0\\
 0  & 0  & 0  & 0  & 0  & 0  & 0  & 0  & 0 & 12  & 1  & 2  & 1 & 12  & 1 & 12  & 1  & 2 & 11 & 12 & 11  & 3 & 12\\
 0  & 0  & 0  & 0  & 0  & 0  & 0  & 0  & 0  & 0  & 0  & 1  & 0  & 0  & 0  & 0  & 0  & 0  & 0 & 12  & 0  & 1  & 1\\
 0  & 0  & 0  & 0  & 0  & 0  & 0 & 12 & 12  & 2 & 10 & 12 & 12  & 0  & 0  & 2 & 11  & 0  & 3  & 1  & 2  & 9  & 1\\
 0  & 0  & 0  & 0  & 0  & 0  & 0  & 0 & 12  & 1  & 0  & 1 & 12  & 0  & 1  & 0  & 0  & 1  & 0  & 0 & 11  & 0  & 0\\
 0  & 0  & 0  & 0  & 0  & 0  & 0  & 0  & 1 & 12  & 1  & 1  & 1 & 12 & 12 & 12  & 0  & 0 & 12 & 12  & 1  & 2 & 12
 \end{array} \right)$$}
satisfying $\mathfrak V(x) = (\mathfrak W(x_1))^{\mathcal T}$ and
$\mathfrak V(y) = (\mathfrak W(y_1))^{\mathcal T}$.

Let $\mathfrak e = (\mathfrak W(e_1))^{\mathcal T}$. Then
$\mathfrak E_3 = \langle \mathfrak x, \mathfrak y, \mathfrak e
\rangle \cong E$. Let $\mathfrak G_3 = \langle \mathfrak
h,\mathfrak x, \mathfrak y, \mathfrak e \rangle$. Using the \textsc{Magma}
command $\verb"CosetAction(G, E)"$ we obtain a faithful
permutation representation of $\mathfrak G_3$ of degree $46575$
with stabilizer $\mathfrak E_3$. In particular $|\mathfrak G_3| =
2^{18}\cdot 3^6\cdot 5^3\cdot 7\cdot 11\cdot 23$.
Using the faithful permutation representation of $\mathfrak G_3$
of degree $46575$ and Kratzer's Algorithm 5.3.18 of \cite{michler}
we calculated the representatives of all the conjugacy classes of
$\mathfrak G_3$, see Table \ref{Co_2cc G_3}.
Furthermore, the character table of $\mathfrak G_3$ has been
calculated by means of the above permutation representation and
\textsc{Magma}. It coincides with the one of $\Co_2$ in 
\cite[pp.~154--155]{atlas}. 

(d) Let $\mathfrak z = (\mathfrak x \mathfrak y \mathfrak
h)^{15}$. Then $C_{\mathfrak G_2}(\mathfrak z)$ contains
$\mathfrak H = \langle \mathfrak h,\mathfrak x, \mathfrak y
\rangle$ which is isomorphic to~$H$. By the table of (c)(4)
$|C_{\mathfrak G}(\mathfrak z)| = |H|$. Hence $C_{\mathfrak
G_3}=(\mathfrak z) \cong \mathfrak H$. The character table of
$\mathfrak G_3$ implies that $\mathfrak G_3$ is a simple group.
This completes the proof.
\end{proof}

Praeger and Soicher 
give in \cite{praeger} a nice
presentation for Conway's simple group $\Co_2$ which is 
given in the statement of the following corollary.

\begin{corollary}\label{cor. presentCo_2} Keep the notation of
Theorem \ref{thm. existenceCo_2}. The finite simple group~$\mathfrak
G_3$ is isomorphic to the finitely presented group $G =
\langle a,b,c,d,e,f,g \rangle$ with set $\mathcal R(G)$ of
defining relations:
\begin{eqnarray*}
&&a^2 = b^2 = c^2 = d^2 =  e^2 = f^2 = g^2 = 1,\\
&&(ab)^3 = (bc)^5 = (cd)^3 = (df)^3 = (fe)^6 = (ae)^4 = 1,\\
&&(ec)^3 = (cf)^4 = (fg)^4 = (gb)^4 = 1,\\
&&(ac)^2 = (ad)^2 = (af)^2 = (ag)^2 = (bd)^2 = (be)^2 = 1,\\
&&(bf)^2 = (cg)^2 = (dg)^2 = (eg)^2 = 1,\\
&&a = (cf)^2, \quad e = (bg)^2, \quad b = (ef)^3,\\
&&(aecd)^4 = (baefg)^3 = (cef)^7 = 1.
\end{eqnarray*}
\end{corollary}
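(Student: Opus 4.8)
The plan is to identify the abstractly presented group $G=\langle a,b,c,d,e,f,g\mid\mathcal R(G)\rangle$ with the matrix group $\mathfrak G_3\le\GL_{23}(13)$ produced in Theorem~\ref{thm. existenceCo_2}, by first constructing an explicit epimorphism $G\twoheadrightarrow\mathfrak G_3$ and then comparing orders. Using \textsc{Magma} and the faithful permutation representation of $\mathfrak G_3$ of degree $46575$ (equivalently its $23$-dimensional matrix generators $\mathfrak h,\mathfrak x,\mathfrak y,\mathfrak e$), I would search for seven involutions $a_0,b_0,\dots,g_0\in\mathfrak G_3$, each written as a word in $\mathfrak h,\mathfrak x,\mathfrak y,\mathfrak e$, such that replacing $a,\dots,g$ by $a_0,\dots,g_0$ satisfies every relation of $\mathcal R(G)$ and such that $\mathfrak G_3=\langle a_0,\dots,g_0\rangle$. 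Both verifications are direct computations. They show that $a\mapsto a_0,\dots,g\mapsto g_0$ extends to a surjective homomorphism $\pi\colon G\twoheadrightarrow\mathfrak G_3$, so in particular $|G|\ge|\mathfrak G_3|$.

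It then remains to prove the reverse inequality $|G|\le|\mathfrak G_3|$. For this I would exhibit a subgroup $U=\langle w_1,\dots,w_k\rangle\le G$, with $w_j$ short words in $a,\dots,g$, chosen so that its image $\pi(U)\le\mathfrak G_3$ is the point stabilizer $\mathfrak E_3\cong E_3$ of the degree-$46575$ action; here $[\mathfrak G_3:\pi(U)]=46575=3^4\cdot5^2\cdot23$ and $|E_3|=2^{18}\cdot3^2\cdot5\cdot7\cdot11$. A coset enumeration (Todd--Coxeter, as implemented in \textsc{Magma}) of $G$ over $U$ is then carried out. Since $gU\mapsto\pi(g)\,\pi(U)$ is a well-defined surjection of $G/U$ onto $\mathfrak G_3/\pi(U)$, one always has $[G:U]\ge 46575$; if the enumeration completes and returns the index $46575$, this surjection is a bijection, hence $\ker\pi\le U$. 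A Reidemeister--Schreier rewriting then produces a presentation of $U$, which one identifies with $E_3$ by means of the finite presentation $\mathcal R(E_3)$ of Lemma~\ref{l. Aut(M22)-extensions} (via a further, much smaller coset enumeration), giving $|U|=|E_3|$. Consequently
\[
|G|=[G:U]\cdot|U|=46575\cdot|E_3|=2^{18}\cdot3^6\cdot5^3\cdot7\cdot11\cdot23=|\mathfrak G_3|,
\]
and together with $|G|\ge|\mathfrak G_3|$ this forces $|G|=|\mathfrak G_3|$, so $\pi$ is an isomorphism. (Alternatively, one may invoke \cite{praeger} directly, where $\mathcal R(G)$ is shown to present a group of order $|\Co_2|=|\mathfrak G_3|$; the epimorphism $\pi$ then finishes the proof at once.)

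The main obstacle is the inequality $|G|\le|\mathfrak G_3|$: a coset enumeration of $G$ over the trivial subgroup is out of reach for a group of order roughly $4.2\cdot10^{13}$, so one must enumerate over a subgroup $U$ of modest index and still certify $|U|$ independently — which is why $\pi(U)$ should be taken to be $\mathfrak E_3\cong E_3$, whose presentation has already been established in this paper, rather than, say, a maximal subgroup $\U_6(2){:}2$ of index $2300$ whose presentation is not otherwise available here. Picking generators $w_j$ of $U$ for which the Todd--Coxeter process terminates quickly with the predicted index is the part that requires care; everything else — checking the finitely many relations of $\mathcal R(G)$ and the generation of $\mathfrak G_3$ by $a_0,\dots,g_0$ — is routine and is done by \textsc{Magma} from the explicit matrices of Theorem~\ref{thm. existenceCo_2}. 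The analogous presentation of $\Fi_{22}$ in Corollary~\ref{cor. presentFi_22} is verified in the same way, starting from the matrix group $\mathfrak G_2\le\GL_{78}(13)$.
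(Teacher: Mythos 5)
Your proposal is correct and computationally it rests on the same engine as the paper's proof: a Todd--Coxeter enumeration of $G$ over a subgroup of index $46575$ whose image in $\mathfrak G_3$ is the point stabilizer $\mathfrak E_3\cong E_3$ of the degree-$46575$ action. The packaging differs. The paper takes the subgroup $Q=\langle a,b,c,d,e,g,(gfdc)^4\rangle$, forms the coset action of $G$ on $G/Q$ to get a permutation group of degree $46575$, and then runs \textsc{Magma}'s isomorphism test against $\mathfrak G_3$; you instead build an explicit epimorphism $\pi\colon G\twoheadrightarrow\mathfrak G_3$ (giving $|G|\ge|\mathfrak G_3|$) and close the gap with $|G|=[G:U]\cdot|U|$ after certifying $|U|=|E_3|$ by Reidemeister--Schreier and the presentation of $E_3$ from Lemma~\ref{l. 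Aut(M22)-extensions}. Your version buys a cleaner logical closure: the paper's isomorphism test, as stated, identifies only the image $G/\mathrm{core}_G(Q)$ with $\mathfrak G_3$, and the triviality of the kernel (equivalently the bound $|Q|\le|E_3|$) is left implicit or delegated to \cite{praeger}; your step (2) supplies exactly that missing certificate. Conversely, the paper buys brevity, and it already hands you the one ingredient you flag as requiring care, namely an explicit generating set for the index-$46575$ subgroup, so in practice you should take $U=Q$ rather than search for new words $w_j$. One small correction of emphasis: the stabilizer of the degree-$46575$ action is $E_3\cong 2^{10}{:}\mathrm{Aut}(\M_{22})$, not a maximal subgroup of $\Co_2$ in the usual sense, but that does not affect your argument since all you need is $[\mathfrak G_3:\mathfrak E_3]=46575$ and the presentation of $E_3$ established earlier in the paper.
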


\begin{proof}
Using the \textsc{Magma} command \verb+FPGroupStrong(PG)+ and
the faithful permutation representation of degree~$46575$ of the
simple group 
$\mathfrak G_3$ given in Theorem \ref{thm. existenceCo_2}(c),
we obtained a finite presentation of $\mathfrak G$
with 
sixteen generators and too many relations to be stated in this
article. Therefore we 
gave in the statement the presentation of
\cite[p.~106]{praeger}. Using its subgroup $Q = \langle
a,b,c,d,e,g,(gfdc)^4 \rangle$ as a stabilizer we 
obtained a permutation
representation for their group $G = \langle a,b,c,d,e,f,g \rangle$
of degree $46575$. An isomorphism test using \textsc{Magma} 
then showed that $\mathfrak G_3 \cong G$.
\end{proof}

\section{Construction of Fischer's simple group $\Fi_{22}$}

By Lemma \ref{l. H(Fi_22)} the amalgam $H \leftarrow D \rightarrow
E_2$ constructed in 
Sections~$2$ and~$3$ satisfies the main
conditions of Step 5 of Algorithm 2.5 of  \cite{michler1}.
Therefore we can apply Algorithm 7.4.8 of \cite{michler} to give
here a new existence proof for Fischer's sporadic group
$\Fi_{22}$, see \cite{fischer}.

\begin{theorem}\label{thm. existenceFi_22}
Keep the notation of Lemma \ref{l. H(Fi_22)} and Proposition
\ref{prop. D(Fi_22)}. Using the notation of the 
three character tables \ref{Fi_22ct_H}, 
\ref{Fi_22ct_D}, and \ref{Fi_22ct_E} of the
groups $H$, 
$D$, and $E_2$, respectively, the following statements
hold:
\begin{enumerate}
\item[\rm(a)] The smallest degree of a nontrivial pair of compatible
  characters $(\chi,\tau)$ in $m\!f\mathrm{char}_{\mathbb{C}}(H)\times
  m\!f\mathrm{char}_{\mathbb{C}}(E_2)$ is~$78$. 

\item[\rm(b)] There is exactly one compatible pair $(\chi, \tau)
\in 
m\!f \mathrm{char}_{\mathbb{C}}(H) \times m\!f \mathrm{char}_{\mathbb{C}}(E_2)$
of degree $78$ of the groups
$H=\langle D, h \rangle$ and $E_2 = \langle D, e 
\rangle$, namely:
$$ 
\qquad (\chi_{4}+ \chi_{20} + \chi_{\bf 17}, \tau_{1} + \tau_{\bf 6})
$$%
with common restriction
$\tau_{|D} = \chi_{|D} = \psi_{1} + \psi_{8}+\psi_{41} +
\psi_{\bf 33}$, 
where irreducible characters with 
boldface indices denote
faithful irreducible characters.

\item[\rm(c)]Let $\mathfrak V$ and $\mathfrak W$ be the 
uniquely determined (up to isomorphism) faithful 
semisimple 
multiplicity-free $78$-dimensional modules of $H$ and $E_2$ over
$F = \GF(13)$ corresponding to 
$\chi$ and~$\tau$, respectively, where $(\chi,\tau)$
is the compatible pair.
Let $\kappa_\mathfrak V : H \rightarrow \GL_{78}(13)$ and
$\kappa_\mathfrak W : E_2 \rightarrow \GL_{78}(13)$ be the
representations of $H$ and $E_2$ afforded by the modules
$\mathfrak V$ and $\mathfrak W$, respectively.
Let $\mathfrak h = \kappa_\mathfrak V(h)$, $\mathfrak x =
\kappa_\mathfrak V(x)$, $\mathfrak y = \kappa_\mathfrak V(y)$ in $
\kappa_\mathfrak V(H) \le \GL_{78}(13)$. Then the following
assertions hold:
\begin{enumerate}
\item[\rm(1)] $\mathfrak V_{|D} \cong \mathfrak W_{|D}$, and there
is a transformation matrix $\mathcal T \in \GL_{23}(13)$ such that
$$
\mathfrak x = \mathcal T^{-1} \kappa_\mathfrak W (x_1) \mathcal T
\mbox{\ and\ }
\mathfrak y = \mathcal T^{-1} \kappa_\mathfrak W(y_1) \mathcal T.
$$
Let $\mathfrak e = \mathcal T^{-1} \kappa_\mathfrak W(e) \mathcal
T \in \GL_{78}(13)$.

\item[\rm(2)] In $\mathfrak G_2 = \langle \mathfrak h, \mathfrak
x,\mathfrak y, \mathfrak e \rangle$ the subgroup $\mathfrak E =
\langle \mathfrak x, \mathfrak y, \mathfrak e \rangle$ is the
stabilizer of a $1$-dimensional subspace ${\mathfrak U}$ of
$\mathfrak V$ such that the $\mathfrak G_2$-orbit ${\mathfrak
U}^{{\mathfrak G_2}}$ has degree~$142155$.

\item[\rm(3)] The 
four generating matrices of $\mathfrak G_2$ are
stated in Appendix~\ref{sec:genmatrixFiG2}.

\item[\rm(4)]  $\mathfrak G_2 = \langle \mathfrak h, \mathfrak x,
\mathfrak y, \mathfrak e \rangle$, and 
$\mathfrak{G}_2$ has $65$
conjugacy classes 
$\mathfrak{g}_i^{\mathfrak{G}_2}$ with
representatives 
$\mathfrak{g}_i$ and centralizer orders
$|C_{\mathfrak{G}_2}(\mathfrak{g}_i)|$
as given in Table
\ref{Fi_22cc G_2}.

\item[\rm(5)] The character table of $\mathfrak G_2$ coincides
with that of $\Fi_{22}$ in the 
Atlas \cite[pp.~156--157]{atlas}.

\end{enumerate}
\item[\rm(d)] $\mathfrak G_2$ is a finite simple group with
$2$-central involution $\kappa_\mathfrak V(z) =(\mathfrak
x\mathfrak y\mathfrak x \mathfrak h)^{10}$ such that
$C_{\mathfrak G_2} (\kappa_\mathfrak V(z)) = \kappa_\mathfrak
V(H)$, and $|\mathfrak G_2| = 2^{17}\cdot 3^9\cdot 5^2\cdot
7\cdot 11 \cdot 
13$. 
\end{enumerate}
\end{theorem}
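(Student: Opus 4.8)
The plan is to mimic the proof of Theorem~\ref{thm. existenceCo_2}, now applying Algorithm~7.4.8 of~\cite{michler} to the amalgam $H \leftarrow D \rightarrow E_2$ whose three members were determined in Lemma~\ref{l. H(Fi_22)} and Proposition~\ref{prop. D(Fi_22)}. For parts~(a) and~(b) I would feed the character tables of $H$, $D$ and $E_2$ (Tables~\ref{Fi_22ct_H}, \ref{Fi_22ct_D}, \ref{Fi_22ct_E}), together with the fusion maps of the $D$-classes into the $H$- and $E_2$-classes, to Kratzer's Algorithm~7.3.10 of~\cite{michler}. This enumerates the compatible pairs of multiplicity-free faithful characters in order of increasing degree; the output shows that the minimal nontrivial degree is $78$ and that $(\chi_{4}+\chi_{20}+\chi_{\bf 17},\,\tau_{1}+\tau_{\bf 6})$, with common $D$-restriction $\psi_{1}+\psi_{8}+\psi_{41}+\psi_{\bf 33}$, is the unique compatible pair of that degree.

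For part~(c) the two $78$-dimensional semisimple $\GF(13)$-modules $\mathfrak V$ and $\mathfrak W$ must be realized by explicit matrices. Starting from the faithful permutation representation of $H$ of degree $1024$ with stabilizer $\langle h_1,h_2,h_3,h_4\rangle$ and that of $E_2$ of degree $1024$ with stabilizer $\M_{22}$, I would decompose the associated permutation modules over $K=\GF(13)$ by the Meataxe, extract the constituents corresponding to the irreducible summands of $\chi$ and $\tau$ that already occur there, and obtain the remaining ones as composition factors of suitable exterior and tensor powers, exactly as Theorem~2.5.4 of~\cite{michler} was used in the $\Co_2$ case. Diagonal joins of the resulting matrices give $\mathfrak h=\kappa_{\mathfrak V}(h)$, $\mathfrak x=\kappa_{\mathfrak V}(x)$, $\mathfrak y=\kappa_{\mathfrak V}(y)$ in $\GL_{78}(13)$, and likewise $\kappa_{\mathfrak W}(x_1)$, $\kappa_{\mathfrak W}(y_1)$, $\kappa_{\mathfrak W}(e)$. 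Since $\mathfrak V_{|D}$ and $\mathfrak W_{|D}$ afford the same character, Parker's isomorphism test (Proposition~6.1.6 of~\cite{michler}) supplies a transformation matrix $\mathcal T\in\GL_{78}(13)$ with $\mathfrak x=(\kappa_{\mathfrak W}(x_1))^{\mathcal T}$ and $\mathfrak y=(\kappa_{\mathfrak W}(y_1))^{\mathcal T}$; putting $\mathfrak e=(\kappa_{\mathfrak W}(e))^{\mathcal T}$ yields $\mathfrak E=\langle\mathfrak x,\mathfrak y,\mathfrak e\rangle\cong E_2$. The coset action of $\mathfrak G_2=\langle\mathfrak h,\mathfrak x,\mathfrak y,\mathfrak e\rangle$ on the cosets of $\mathfrak E$ is then expected to be a faithful permutation representation of degree $142155$, whence $|\mathfrak G_2|=2^{17}\cdot3^9\cdot5^2\cdot7\cdot11\cdot13$; Kratzer's Algorithm~5.3.18 of~\cite{michler} produces the $65$ conjugacy classes of Table~\ref{Fi_22cc G_2}, and \textsc{Magma} computes the character table, which is matched against that of $\Fi_{22}$ in~\cite[pp.~156--157]{atlas}.

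For part~(d) I would set $\mathfrak z=(\mathfrak x\mathfrak y\mathfrak x\mathfrak h)^{10}$ and observe that $\mathfrak H=\langle\mathfrak h,\mathfrak x,\mathfrak y\rangle\cong H$ is contained in $C_{\mathfrak G_2}(\mathfrak z)$; since the class data of~(c)(4) give $|C_{\mathfrak G_2}(\mathfrak z)|=|H|$, equality $C_{\mathfrak G_2}(\mathfrak z)=\mathfrak H$ holds, and the character table shows that $\mathfrak G_2$ is simple. The main obstacle is the coset enumeration defining $\mathfrak G_2$: before the classification step of Algorithm~7.4.8 of~\cite{michler} can be invoked one must verify that the abstract group $\langle\mathfrak h,\mathfrak x,\mathfrak y,\mathfrak e\rangle$ is finite of exactly the expected order, and both this enumeration of $142155$ cosets and the ensuing conjugacy class and character computations in dimension $78$ over $\GF(13)$ are considerably heavier than in the $\Co_2$ case treated above.
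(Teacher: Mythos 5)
Your proposal follows essentially the same route as the paper: Kratzer's Algorithm~7.3.10 with the $D$-class fusions for (a) and (b), Meataxe extraction of the constituents of $\chi$ and $\tau$ from the degree-$1024$ permutation modules supplemented by tensor-power constructions via Theorem~2.5.4 of~\cite{michler}, Parker's isomorphism test for $\mathcal T$, the coset action of degree $142155$, Kratzer's Algorithm~5.3.18 for the classes, and the centralizer-order plus character-table argument for (d). The only quibble is your closing worry about a coset enumeration: $\mathfrak G_2$ is by construction a subgroup of $\GL_{78}(13)$ and hence automatically finite, so the degree-$142155$ permutation representation is obtained by an orbit computation on the subspace $\mathfrak U$ rather than by enumerating cosets of an abstract presentation.
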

\begin{proof}
(a) The character tables of the groups $H$, 
$D$, and $E_2$ are
stated in the 
Appendix; we will follow the notation used there. 
Using the character tables of $H$, $D$, and $E_2$ and the fusion
of the classes of $D(H)$ in $H$ and $D(E_2)$ in $E_2$ an
application of Kratzer's Algorithm 7.3.10 of \cite{michler} yields
the compatible pair stated in assertion (a).

(b) 
Kratzer's Algorithm also shows that the pair $(\chi,\tau)$ of (b) is the unique
compatible pair of degree $78$ with respect to the fusion of the
$D$-classes into the $H$- and 
$E_2$-classes.

(c) In order to construct the 
semisimple faithful representation
$\mathfrak V$ corresponding to the character $\chi = \chi_4 +
\chi_{20} + 
\chi_{\bf 17}$ we determine the irreducible constituents
of the faithful permutation representation $(1_T)^H$ of $H$ with
stabilizer $T = \langle h_1,h_2,h_3,h_4 \rangle$ given in Lemma
\ref{l. H(Fi_22)}. Calculating inner products with the irreducible
characters of $H$ it follows $(1_T)^H$ has 
seven irreducible
constituents of degrees $1$, $32$, $40$, $120$, $135$, 
$216$, and $480$. By the character table of $H$ we know that
$\chi_{\bf 17}$ is
the irreducible character of $H$ of degree $32$ occurring in the
permutation character $(1_T)^H$. Constructing the permutation
matrices of the 
three generators $h$, 
$x$, and $y$ of $H$ over the
field $K = \GF(13)$ and applying the Meataxe 
Algorithm implemented
in \textsc{Magma} we obtain the matrices $\mathcal X_2, \mathcal Y_2,
\mathcal H_2 \in \GL_{32}(13)$ of the generators of $H$ in the
$32$-dimensional representation $\mathfrak V_1$. These 
three 
matrices are 
given in Appendix~\ref{sec:genmatrixFiG2}. Furthermore, it has been
checked that $\mathfrak V_1$ restricted to $D(H) = \langle x,y
\rangle$ is irreducible.

In order to find the irreducible modules $\mathfrak V_2$ and
$\mathfrak V_3$ corresponding to the characters $\chi_4$ of degree
$6$ and $\chi_{20}$ of degree $40$ we applied Theorem 2.5.4 of
\cite{michler} to the faithful character 
$\chi_{\bf 17}$. It follows
that both missing representations are constituents of the 
$6$th tensor power of $\mathfrak V_1$. A character calculation shows
that the irreducible representations $\mathfrak V_2$ corresponding
to $\chi_{20}$ and $\mathfrak V_3$ corresponding to $\chi_4$ can
be constructed by the Meataxe decompositions of the
representations of the tensor products belonging to the following
character products:
the $135$-dimensional irreducible character $\chi_{42}$ occurs in
$\chi_{\bf 17}^2$. 
Their tensor product $\chi_{42}\otimes
\chi_{\bf 17}$ contains the $640$-dimensional character $\chi_{80}$,
and $\chi_{59}$ of degree $270$ is an irreducible constituent of
$\chi_{80}\otimes 
\chi_{\bf 17}$. 
Their tensor product $\chi_{59}\otimes
\chi_{\bf 17}$ has the $192$-dimensional constituent~$\chi_{47}$. The
desired character $\chi_4$ of degree $6$ is then an irreducible
composition factor of~$\chi_{47}\otimes 
\chi_{\bf 17}$. 

The faithful $480$-dimensional irreducible character $\chi_{\bf 74}$
occurs in $\chi_{\bf 17}^3$. 
The tensor product 
$\chi_{\bf 74}\otimes\chi_{\bf 17}$ 
contains the $135$-dimensional
character $\chi_{41}$, and $\chi_{45}$ of degree $160$ is an
irreducible constituent of $\chi_{41}\otimes 
\chi_{\bf 17}$. 
The tensor product $\chi_{45}\otimes 
\chi_{\bf 17}$ has the 
desired character $\chi_{20}$ of degree $40$ as an irreducible composition
factor.

An application of the Meataxe 
Algorithm implemented in \textsc{Magma}
produces then the 
six matrices of the three generators of $H$
with respect to fixed bases in $\mathfrak V_2$ and $\mathfrak V_3$. Their
$46$-dimensional diagonal joins $\mathcal H_1$, 
$\mathcal X_1$, and
$\mathcal Y_1$ are 
given in Appendix~\ref{sec:genmatrixFiG2}.

Now we construct the faithful representation $\mathfrak W$ of $E_2$
corresponding to the character $\tau = \tau_{1} + \tau_{\bf
6}$. Clearly $\tau_{1}$ corresponds to the trivial representation
$\mathfrak W_1$ of~$E_2$. By means of the character table of $E_2$ we
see that the irreducible representation $\mathfrak W_2$ of~$\tau_{\bf
6}$ occurs as an irreducible constituent of the faithful permutation
representation $(1_{T_1})^{E_2}$ of $E_2$ of degree $1024$ with
stabilizer $\M_{22}$ constructed in
Lemma~\ref{l. M22-extensions}. Using the Meataxe 
Algorithm implemented
in \textsc{Magma} the $77$-dimensional irreducible constituent
$\mathfrak W_2$ of $(1_{T_1})^E$ over $K = \GF(13)$ has been
determined. Its restriction $(\mathfrak W_2)_{|D}$ to $D = \langle
x_1,y_1 \rangle$ decomposes into 
three irreducible constituents of
dimensions $5$, 
$40$, and $32$. Taking a basis $B_1$ of the
$1$-dimensional restriction $(\mathfrak W_1)_{|D}$ to $D$ and bases
$B_2$, $B_3$, and $B_4$ in the 
three irreducible constituents of
$(\mathfrak W_2)_{|D}$ we obtain the 
two matrices $\mathfrak W(x_1)$
and $\mathfrak W(y_1)$ of the generators $x_1$ and $y_1$ of $D =
D_{E_2}$ in $\GL_{78}(13)$. Let $B = B_1 \cup B_2 \cup B_3 \cup
B_4$. Then $B$ is a basis of the 
semisimple $KE_2$-module $\mathfrak
W$. Let $\mathfrak W(e_1)$ be the matrix of the third generator $e_1$
of $E_2$ 
with respect to~$B$.
The two $KD$-modules $\mathfrak V_{|D}$ and $\mathfrak W_{|D}$
described by the pairs $(\mathfrak V(x),\mathfrak V(y))$ and
$(\mathfrak W(x_1),\mathfrak W(y_1))$ of matrices in
$\GL_{78}(13)$ are isomorphic by construction. Let $Y = 
\GL_{78}(13)$. Applying then Parker's isomorphism test of
Proposition 6.1.6 of~\cite{michler} by means of the \textsc{Magma} command
$$\verb"IsIsomorphic(GModule(sub<Y|V(x),V(y)>),GModule(sub<Y|W(x1),W(y1)>))"$$
we obtain the transformation matrix $\mathcal T$ satisfying
$\mathfrak V(x) = (\mathfrak W(x_1))^{\mathcal T}$ and $\mathfrak
V(y) = (\mathfrak W(y_1))^{\mathcal T}$. In view of its size this
transformation matrix is decomposed into 
four block matrices
$\mathcal A \in M\!at_{38,38}$, $\mathcal B \in M\!at_{38,40}$,
$\mathcal C \in M\!at_{40,38}$ and, $\mathcal D \in M\!at_{40,40}$ 
such that:

$$}%
\normalsize%
Let $\mathfrak e = (\mathfrak W(e_1))^{\mathcal T}$. Then
$\mathfrak E_2 = \langle \mathfrak x, \mathfrak y, \mathfrak e
\rangle \cong E_2$. Let $\mathfrak G_2 = \langle \mathfrak
h,\mathfrak x, \mathfrak y, \mathfrak e \rangle$. The 
four 
generating matrices of $\mathfrak G_2$ are 
given in Appendix~\ref{sec:genmatrixFiG2}.
Using the \textsc{Magma} command $\verb"CosetAction(G, E)"$ we obtain a
faithful permutation representation of $\mathfrak G_2$ of degree
$142155$ with stabilizer $\mathfrak E_2$. In particular $|\mathfrak
G_2| = 2^{17}\cdot 3^9\cdot 5^2\cdot 7\cdot 11\cdot 13$.

Using the faithful permutation representation of $\mathfrak G_2$
of degree $142155$ and Kratzer's Algorithm 5.3.18 of
\cite{michler} we calculated the representatives of the $65$
conjugacy classes of $\mathfrak G_2$, see Table \ref{Fi_22cc G_2}.
Furthermore, the character table of $\mathfrak G_2$ has been
calculated by means of the above permutation representation and
\textsc{Magma}. It coincides with the one of $\Fi_{22}$ in 
\cite[pp.~156--157]{atlas}. 

(d) Let $\mathfrak z = 
(\mathfrak{xyxh})^{10}$. Then 
$C_{\mathfrak{G}_2}(\mathfrak z)$ contains $\mathfrak H = \langle \mathfrak
h,\mathfrak x, \mathfrak y \rangle$ which is isomorphic to $H$.
Now 
$|C_{\mathfrak{G}_2}(\mathfrak z)| = |H|$ by Table \ref{Fi_22cc
G_2}. Hence 
$C_{\mathfrak{C}_2}(\mathfrak z) \cong \mathfrak H$.
The character table of $\mathfrak G_2$ implies that $\mathfrak
G_2$ is a simple group. This completes the proof.
\end{proof}

Praeger and Soicher have given in \cite{praeger} a nice
presentation for Fischer's
simple group $\Fi_{22}$ which 
we now also verify for $\mathfrak{G}_2$. 

\begin{corollary}\label{cor. presentFi_22} Keep the notation of
Theorem \ref{thm. existenceFi_22}. The finite simple group~$\mathfrak
G_2$ is isomorphic to the finitely presented group $G =
\langle a,b,c,d,e,f,g,h,i \rangle$ with the following set
$\mathcal R(G)$ of defining relations:
\begin{eqnarray*}
&&a^2 = b^2 = c^2 = d^2 =  e^2 = f^2 = g^2 = h^2 = i^2 = 1,\\
&&(ab)^3 = (bc)^3 = (cd)^3 = (de)^3 = (ef)^3 = (fg)^3 = (dh)^3 = (hi)^3 = 1,\\
&&(ac)^2 = (ad)^2 = (ae)^2 = (af)^2 = (ag)^2 = (ah)^2 = (ai)^2 = 1,\\
&&(bd)^2 = (be)^2 = (bf)^2 = (bg)^2 = (bh)^2 = (bi)^2 = 1,\\
&&(ce)^2 = (cf)^2 = (cg)^2 = (ch)^2 = (ci)^2 = (df)^2 = (dg)^2 = (di)^2 = 1,\\
&&(eg)^2 = (eh)^2 = (ei)^2 = (fh)^2 = (fi)^2 = (gh)^2 = (gi)^2 = 1,\\
&&(dcbdefdhi)^{10} = (abcdefh)^9 = (bcdefgh)^9 = 1. 
\end{eqnarray*}
\end{corollary}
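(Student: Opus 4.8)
The plan is to follow the strategy used in the proof of Corollary~\ref{cor. presentCo_2}. First I would apply the \textsc{Magma} command \verb"FPGroupStrong" to the faithful permutation representation of degree $142155$ of $\mathfrak G_2$ constructed in Theorem~\ref{thm. existenceFi_22}(c)(2). This yields \emph{a} finite presentation of $\mathfrak G_2$, but one with far too many generators and relations to be of any practical use. Instead I would take the presentation $G=\langle a,b,c,d,e,f,g,h,i\mid\mathcal R(G)\rangle$ recorded in the statement, which is the presentation of $\Fi_{22}$ given by Praeger and Soicher in \cite{praeger}, and verify directly that $\mathfrak G_2\cong G$.

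The verification would proceed in two steps. First, inside $G$ I would choose a subgroup $Q$ of index $142155$ — the analogue of the stabiliser $Q=\langle a,b,c,d,e,g,(gfdc)^4\rangle$ used for $\Co_2$ — playing the role of the maximal subgroup $2^{10}{:}\M_{22}$ of $\Fi_{22}$ of that index, and run \verb"CosetAction(G,Q)" to obtain a faithful permutation representation of $G$ of degree $142155$, the same degree as the representation of $\mathfrak G_2$ on the cosets of $\mathfrak E_2$. Then \textsc{Magma}'s isomorphism test applied to these two permutation groups confirms $\mathfrak G_2\cong G$. Alternatively, and more conceptually, I would exhibit nine elements $\hat a,\dots,\hat i$ of $\mathfrak G_2=\langle\mathfrak h,\mathfrak x,\mathfrak y,\mathfrak e\rangle\le\GL_{78}(13)$, written as words in $\mathfrak h,\mathfrak x,\mathfrak y,\mathfrak e$, which satisfy every relation of $\mathcal R(G)$ and together generate $\mathfrak G_2$; this gives an epimorphism $\varphi\colon G\twoheadrightarrow\mathfrak G_2$, and since the coset enumeration above shows $|G|=|\Fi_{22}|=|\mathfrak G_2|$ by Theorem~\ref{thm. existenceFi_22}(d), the map $\varphi$ has trivial kernel and is therefore an isomorphism.

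The hard part will be the coset enumeration. The Praeger--Soicher presentation for $\Fi_{22}$ has nine involutory generators, a large number of short braid and commutation relations of Coxeter type, and three additional long relations; an ill-chosen subgroup $Q$ produces an enumeration that does not close within available memory, so the real work is to locate a subgroup $Q$ of sufficiently large order — mirroring a maximal or otherwise sizeable subgroup of $\Fi_{22}$ — for which \verb"CosetAction" terminates quickly. For the epimorphism route there is a further subtlety: the nine generators of $\mathcal R(G)$ are the Fischer $3$-transpositions, and the construction of $\mathfrak G_2$ via the centralizer $H$ of a $2$-central involution does not hand these to us directly, so one must first identify the class of $3$-transpositions inside $\mathfrak G_2$ and then find nine of them arranged according to the diagram of $\mathcal R(G)$. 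Once $Q$ and the words $\hat a,\dots,\hat i$ have been found, everything remaining is routine computation in \textsc{Magma}.
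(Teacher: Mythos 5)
Your proposal matches the paper's proof: the paper likewise takes the Praeger--Soicher presentation, constructs a degree-$142155$ permutation representation of $G$ via coset enumeration on a suitable subgroup, and concludes with \textsc{Magma}'s isomorphism test against the permutation representation of $\mathfrak G_2$ from Theorem~\ref{thm. existenceFi_22}(c). The only detail you leave open --- the choice of stabilizer --- is supplied explicitly in the paper as $Q = \bigl\langle a,c,e,g,h,bacb,dced,fegf,dchd,dehd,(cdehi)^4\bigr\rangle$.
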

\begin{proof}
Using the \textsc{Magma} command \verb+FPGroupStrong(PG)+ and the
faithful permutation representation of degree~$142155$ of the simple group
$\mathfrak{G}_2$ given in Theorem~\ref{thm. existenceFi_22}(c) 
we obtained a finite presentation of $\mathfrak G_2$ with 16
generators and too many relations to be stated in this article.
Therefore we stated in the assertion the presentation of
\cite[p. 110]{praeger}. Using its subgroup 
\[ Q = \bigl\langle a,c,e,g,h,bacb,dced,fegf,dchd,dehd,(cdehi)^4
\bigr\rangle\] 
as a stabilizer we 
obtained a faithful permutation representation of degree
$142155$ for the group $G=\langle a,b,c,d,e,f,g\rangle$. An
isomorphism test in \textsc{Magma} established that
$\mathfrak{G}_2\cong G$.
\end{proof}

\section{The remaining cases: $E_1$, $E_4$, and $E_5$}

In this section we summarize our results on the remaining cases.
Using Theorem~\ref{thm. existenceFi_22} we prove that the
application of Algorithm 2.5 of  \cite{michler1} to the extension
group~$E_4$ constructs the automorphism group 
${\rm Aut}(\Fi_{22})$. In
the cases of $E_1$ and $E_5$ the algorithm terminates without
success.

\begin{corollary}\label{cor. D(Aut(Fi_22))} Keep the notation of Lemma
\ref{l. Aut(M22)-extensions}. Let $E_4 = \langle
p_1,q_1,v_1\rangle$ be the split extension of $A_{22} =
{\rm Aut}(\M_{22})$ by its simple module $V_4$ of dimension $10$ 
over $F = 
\GF(2)$. Let $\mathfrak A_2$ be the automorphism group of the
simple group $\mathfrak G_2$, and let $\mathfrak H = C_{\mathfrak
G_2}(\mathfrak z)$ be the centralizer of a $2$-central involution
$\mathfrak z$ of $\mathfrak G_2$. Then there is a unique maximal
elementary abelian normal subgroup $\mathfrak B$ of a Sylow 
$2$-subgroup $\mathfrak S$ of~$\mathfrak H$ such that $N_{\mathfrak
G_2}(\mathfrak B) \cong E_4$.
\end{corollary}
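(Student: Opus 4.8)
The plan is to mimic, essentially verbatim, the construction carried out for $E_3$ and $E_2$ in Section~3, now starting from the split extension $E_4 = V_4\rtimes A_{22}$. By Lemma~\ref{l. classes}(f) the group $E_4$ has a unique conjugacy class of $2$-central involutions, represented by $z_4 = (p_1^2q_1v_1)^{10}$, with $|C_{E_4}(z_4)| = 2^{18}\cdot 3\cdot 5$. First I would compute $D_4 = C_{E_4}(z_4)$ from the faithful permutation representation of $E_4$ of degree $1024$ (Lemma~\ref{l. classes}(a)) together with \textsc{Magma}, and verify by direct inspection of its normal subgroup lattice that $D_4$ has a unique nonabelian normal subgroup $Q$ with $V = Q/Z(Q)$ elementary abelian of order $2^8$, a complement $W$ of $Q$ in $D_4$, and $Z(Q)$ of order $4$ --- exactly as in Proposition~\ref{prop. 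D(Fi_22)}. The Fitting subgroup of $W$ should again have order $16$ with complement $L\cong S_5$, so that $W$ has trivial centre and Step~5 of Algorithm~2.5 of~\cite{michler1} applies.

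Next I would run the core loop of Algorithm~2.5: pass to $D_1 = D_4/Z(Q)$, realise $W_1 = \alpha(W)$ acting on $V$ by $8\times 8$ matrices over $F=\GF(2)$, locate the Sylow $2$-subgroup $S$ of $W_1$, its unique maximal elementary abelian normal subgroup $A$ and the central involution $u$ of $S$, and then compute inside $\GL_8(2)$ the intersection $MX = C_{\GL_8(2)}(Mu)\cap C_{\GL_8(2)}(Md)$ for a suitable element $d$ of order $3$. From a Sylow $3$-subgroup of $MX$ I would extract a matrix $Mx$ of order $3$ enlarging $C_{MW_1}(Mu)$ to a group $MC$ of index a $3$-power, so that $M\!K = \langle MW_1, Mx\rangle$ has derived subgroup isomorphic to $\Aut(\U_4(2))$'s relevant subquotient (cf.\ Proposition~\ref{prop. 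D(Fi_22)}(j)--(k)). Forming $H_1 = V\rtimes K$ and, since $Z(Q)$ is a Klein four group, performing the \emph{two} successive central extension steps via Holt's Algorithm~7.4.5 of~\cite{michler}, I expect to find a \emph{unique} central extension $H_4$ whose Sylow $2$-subgroups match those of $D_4$; this $H_4$ is then the candidate $2$-central involution centraliser. I would then check, again with \textsc{Magma}, that a Sylow $2$-subgroup of $H_4$ has a unique maximal elementary abelian normal subgroup $\mathfrak B$ of order $2^{10}$ with $N_{H_4}(\mathfrak B)\cong D_4$, fulfilling Step~4 of the algorithm.

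Finally, having $H_4$ and $D_4$ satisfying all hypotheses of Algorithm~7.4.8 of~\cite{michler}, I would compute the compatible multiplicity-free faithful character pairs for the amalgam $H_4\leftarrow D_4\rightarrow E_4$, construct the resulting matrix group $\mathfrak G$ over $\GF(13)$, and identify it --- via its conjugacy class data and character table --- with $\Aut(\Fi_{22})$, using Theorem~\ref{thm. existenceFi_22} to recognise the index-$2$ simple subgroup as $\mathfrak G_2\cong\Fi_{22}$. Since $\mathfrak A_2 = \Aut(\mathfrak G_2)$ and $\mathfrak H = C_{\mathfrak G_2}(\mathfrak z)$, the statement then follows: the Sylow $2$-subgroup $\mathfrak S$ of $\mathfrak H$ obtained inside the constructed $\mathfrak A_2$ has $\mathfrak B$ as its unique maximal elementary abelian normal subgroup, and $N_{\mathfrak G_2}(\mathfrak B)\cong E_4$ by construction.

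\textbf{Main obstacle.} The delicate point is the two-step central extension in the last paragraph of the analogue of Proposition~\ref{prop. D(Fi_22)}(n): with $\dim_F[H^2(H_1,F)]$ large at each stage one must sift through all nonsplit central extensions $E_{a,b,c,d}$ (and then $F_{a,b,c,d}$) and isolate the unique one whose Sylow $2$-subgroup is isomorphic to that of $D_4$ and whose relevant stabiliser is isomorphic to $K$ (resp.\ to $L$). Getting this uniqueness --- and hence a well-defined $H_4$ --- is where the real work lies; everything downstream is a mechanical application of the two cited algorithms together with a character-table comparison against the Atlas entry for $\Aut(\Fi_{22})$.
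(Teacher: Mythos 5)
Your proposal takes a genuinely different route from the paper. You propose to re-run the whole forward machinery starting from $E_4$: compute $D_4 = C_{E_4}(z_4)$, extract $Q$, $W$ and the matrix group $K \le \GL_8(2)$, build $H_1 = V\rtimes K$, perform the two-step central extension to obtain a candidate centralizer $H_4$, and finally feed the amalgam $H_4 \leftarrow D_4 \rightarrow E_4$ into Algorithm 7.4.8 and identify the output with $\Aut(\Fi_{22})$. The paper instead works \emph{backwards} from the group it already has: since $t = (\mathfrak x\mathfrak y\mathfrak e)^7$ has centralizer of index $3510$ in $\mathfrak G_2$ (Table \ref{Fi_22cc G_2}), one gets a degree-$3510$ faithful permutation representation of $\mathfrak G_2$, from which \textsc{Magma} computes $\mathfrak A_2 = \Aut(\mathfrak G_2)$ of order $2\,|\mathfrak G_2|$ together with a permutation representation of the same degree. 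Inside $\mathfrak A_2$ the paper locates a Sylow $2$-subgroup $PS$ of the derived subgroup, the involution centralizer $PH \cong \mathfrak H$, a complement $\langle pu\rangle$ of $PH$ in $N_{\mathfrak A_2}(PH)$ normalizing $PS$, and the unique maximal elementary abelian normal subgroup $PB$ of $PS$ of order $2^{10}$; an isomorphism test then shows $N_{\mathfrak A_2}(PB) \cong C_{E_4}(z_4)$, and the conclusion that Algorithm 2.5 applied to $E_4$ returns $\mathfrak A_2$ follows from Theorem \ref{thm. existenceFi_22}. The paper's route buys a short, fully verifiable computation that refers directly to the specific groups $\mathfrak G_2$ and $\mathfrak H$ named in the statement; your route buys uniformity with Sections 3--5 but is far heavier and would at the end still require an isomorphism test tying your newly built group back to the $\mathfrak G_2$ of Theorem \ref{thm. existenceFi_22} before the statement about $\mathfrak B$ and $N_{\mathfrak G_2}(\mathfrak B)$ could be read off.

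Two points in your programme need more than optimism. First, the uniqueness of the candidate $H_4$ after the two successive central extensions is exactly the step you defer; until it is carried out you have no well-defined amalgam, and the paper deliberately avoids having to do this for $E_4$ at all. Second, Algorithm 7.4.8 as used in Sections 4 and 5 targets \emph{simple} groups, whereas $\Aut(\Fi_{22})$ is not simple; the compatible-character-pair and matrix-group steps would need to be adapted (or you would first have to construct the index-$2$ simple subgroup and then its extension), and the final recognition cannot simply quote the Atlas character table of a simple group. Neither obstacle is fatal, but both are substantive, and the paper's backward identification sidesteps them entirely.
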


\begin{proof}
In order to simplify the notation in the proof we replace Gothic
letters by Roman letters and $\mathfrak G_2$, $\mathfrak A_2$ by
$G$, $A$, respectively. Then $G = \langle x,y,h,e \rangle$ by
Theorem \ref{thm. existenceFi_22} and $t = (xye)^7$ is an
involution of $G$ with a centralizer $C_G(t)$ of index $|G :
C_G(t)| = 3510$ by Table \ref{Fi_22cc G_2}. Since $G$ is simple it
has a faithful permutation representation $PG$ of degree $3510$.
Using 
it, \textsc{Magma} has been able to calculate the automorphism group
$A$ of $G$ and its order $|A| = 2^{18}\cdot 3^9\cdot 5^2\cdot
7\cdot 11\cdot 13$. Using the command
$\verb"PermutationRepresentation(AutG)"$,
\textsc{Magma} produces a faithful
permutation representation $PA$ of $A$ of degree $3510$. Let $PU$
be the derived subgroup of $PA$. Then \textsc{Magma} 
verifies that $PU \cong
PG$ and that $|PA : PU| = 2$. Let $PS$ be a Sylow $2$-subgroup of
$PU$ and $pz$ an involution in the center of $PS$. Let $PH =
C_{PU}(pz)$. An isomorphism test shows that $PH \cong C_G(z)$
where $z$ is a $2$-central involution of the simple group $G$. Let
$NH = N_{PA}(PH)$. Then $|NH : PH| = 2$. 
Furthermore, $PH$ has a complement $PC = \langle pu\rangle$ of order
$2$ in~$NH$, as has been checked
by means of \textsc{Magma}. Now Sylow's Theorem asserts that $pu$
can be chosen so that $PS = PS^{pu}$. Another application of \textsc{Magma}
shows that $PS$ has a unique elementary abelian normal subgroup
$PB$ of order $2^{10}$. Since $PB$ is a characteristic subgroup of
$PS$ it is normalized by $pu$. Furthermore, $PA = \langle PU, pu
\rangle$. \textsc{Magma} 
also shows that $PD = N_{PA}(PB)$ has order
$2^{18}\cdot 3\cdot 5$. Another isomorphism test verifies that $PD
\cong C_{E_4}(y)$ where $y$ is the $2$-central involution $y =
(p_1^2q_1v)^{10}$ of $E_4$ given in Lemma \ref{l. classes}.
Therefore Algorithm 2.5 of  \cite{michler1} and Theorem \ref{thm.
existenceFi_22} imply that the application of that algorithm to
the extension group $E_4$ returns the automorphism group $A$ of
the simple group $G$.
\end{proof}

\begin{remark} Let $E_1 = V_1\rtimes \M_{22} = \langle a,b,c,d,t,g,h,i,v_1\rangle$ be
the split extension of $\M_{22}$ by its simple module $V_1$ of
dimension $10$ over $F = 
\GF(2)$ defined in
Lemma~\ref{l. M22-extensions}. By Lemma \ref{l. classes} $E_1$ has a
unique class of $2$-central involutions represented by $z = (tv_1)^3$.
Its centralizer $D = C_{E_1}(z)$ has a uniquely determined 
nonabelian
normal subgroup $Q$ of order $512$ such that $V = Q/Z(Q)$ is
elementary abelian of order $2^8$, where $Z(Q) = \langle z \rangle$
denotes the center of $Q$. Furthermore, $Q$ has a complement $W$ in
$D$. The Fitting subgroup $B$ of $W$ has order $16$ and its complement
$L$ in $W$ is isomorphic to the alternating group $A_6$ and not to
$S_5$ as in Proposition \ref{prop. D(Fi_22)}. Since the center of $W$
has order $2$ we applied Algorithm 7.4.8 of~\cite{michler} to
construct a subgroup $K$ of $\GL_8(2)$ such that $|K : W|$ is
odd. However, this application was not successful.
\end{remark}

\begin{remark} Let 
$E_5$ be the 
non-split extension of $A_{22}$ by its simple module $V_3$ of
dimension $10$ over $F = 
\GF(2)$ defined in Lemma \ref{l.
Aut(M22)-extensions}. By 
Lemma~\ref{l. classes}, $E_5$ has a unique
class of $2$-central involutions represented by $z = p_2^4$. Its
centralizer $D = C_{E_5}(z)$ has a uniquely determined 
nonabelian
normal subgroup $Q$ of order~$512$ such that $V = Q/Z(Q)$ is
elementary abelian of order $2^8$, where $Z(Q) = \langle z
\rangle$ denotes the center of $Q$. This time $Q$ does not have a
complement in $D$. The Fitting subgroup $B$ of $W = D/Q$ has order
$32$ and its factor group $L = W/B$ is isomorphic to the symmetric
group $S_6$. Since the center of $W$ has order $2$ we applied
Algorithm 7.4.8 of \cite{michler} to construct a subgroup $K$ of
$\GL_8(2)$ such that $|K : W|$ is odd. However, that application
was not successful.
\end{remark}

\filbreak
\begin{appendix}


\section{Representatives of conjugacy classes}\label{CFcc}

\begin{cclass}\label{Co_2cc H} Conjugacy classes of $H(\Co_2) = \langle x, y, h \rangle$

\bigskip
{ \setlength{\arraycolsep}{1mm}
\renewcommand{\baselinestretch}{0.5}
{\scriptsize
 $$

\]}
\bigskip\bigskip
\end{appendix}

\filbreak

\end{document}